\numberwithin{equation}{section}
\newcommand{\mytheorem}[3]{%
  \newaliascnt{#1}{#3}
  \newtheorem{#1}[#1]{#2}
  \aliascntresetthe{#1}
  \expandafter\newcommand\csname #1autorefname\endcsname{#2}
}
\newif\ifuserdefn@presubmit
\newtheorem{defn}{Definition}[section]
\newtheorem*{rmk}{Remark}
\newtheorem*{claim}{Claim}
\newtheorem{nclaim}{Claim} 
\newcommand{\I}{\mathrm{I}}
\newcommand{\RB}{\mathbb{R}}
\newcommand{\eps}{\varepsilon}
\def\Xint#1{\mathchoice
  {\XXint\displaystyle\textstyle{#1}}%
  {\XXint\textstyle\scriptstyle{#1}}%
  {\XXint\scriptstyle\scriptscriptstyle{#1}}%
  {\XXint\scriptscriptstyle\scriptscriptstyle{#1}}%
\!\int}
\def\XXint#1#2#3{{\setbox0=\hbox{$#1{#2#3}{\int}$ }
\vcenter{\hbox{$#2#3$ }}\kern-.6\wd0}}
\def\dashint{\Xint-}
\newcommand{\embedto}{\hookrightarrow}
\newcommand{\eqdef}{\mathpunct{:}=}
\newcommand{\xtext}[1]{\ensuremath{\,\text{#1}\,}}
\newcommand{\inner}[1]{\left\langle#1\right\rangle}
\DeclareMathOperator{\osc}{osc}
\newcommand{\pt}{\partial}
\newcommand{\tr}{\operatorname{tr}}
\newcommand{\abs}[1]{\left|#1\right|}
\newcommand{\set}[1]{\left\{#1\right\}}
\newcommand{\sbr}[1]{\left(#1\right)}
\newcommand{\mbr}[1]{\left[#1\right]}
\newcommand{\lbr}[1]{\left\{#1\right\}}
\newcommand{\II}{\mathrm{II}}
\renewcommand{\i}{\rm{i}}
\newcommand{\eng}{\mathcal{E}}
\newcommand{\epst}{\texorpdfstring{$\eps$}{epsilon}}
\newcommand{\bM}{\overline{M}}
\newcommand{\driv}[2][u]{\nabla^{#2}#1}
\newcommand{\scomp}{\#}
\newcommand{\pfrac}[2]{\frac{\partial #1}{\partial #2}}
\title[Neck Analysis]{Neck Analysis of Extrinsic Polyharmonic Maps}
\author{Wanjun Ai}
\address{School of Mathematical Sciences, University of Science and Technology of China}%
\email{aiwanjun@mail.ustc.edu.cn}%
\author{Hao Yin}
\address{School of Mathematical Sciences, University of Science and Technology of China}%
\email{haoyin@ustc.edu.cn}%
\thanks{The research work of Hao Yin is supported by NSFC 11471300.}%
\subjclass[2010]{35J60, 35B99}%
\keywords{polyharmonic maps, blow-up analysis, energy identity, no neck}%
\date{\today}
\begin{document}
\begin{abstract}
  We prove the energy identity and the no neck property for a sequence of smooth extrinsic polyharmonic maps with bounded total energy. 
\end{abstract}
\maketitle
\section{Introduction}\label{sec:intro}
Suppose that $\Omega$ is a smooth domain in $\RB^{n}$ and that $N$ is a smooth compact submanifold in $\RB^K$. For a $W^{2m,2}$-map $u$ from $\Omega$ to $N$, consider the (total) energy
\begin{equation*}
  \eng(u;\Omega) = \frac{1}{2}\int_\Omega|\nabla^{m}u|^2,
\end{equation*}
where $\nabla$ is the usual derivative of a function defined on $\Omega$. $\eng(u;\Omega)$ is called the \emph{extrinsic} $m$-polyenergy, since it depends on the particular embedding of $N$ in $\RB^{K}$.

The critical points of $\eng(u;\Omega)$ are called \emph{extrinsic polyharmonic maps}. They are natural generalizations of harmonic maps, which have been extensively studied in the past decades. We expect many results in the theory of harmonic maps generalize to the case of polyharmonic maps, as confirmed by the study of biharmonic maps, which is a special case $(m=2)$ of polyharmonic maps. Of course, the main challenge is that the Euler-Lagrange equation is now an elliptic system of higher order. 

Above the critical dimension ($n>2m$), the regularity problem for general polyharmonic map ($m>2$)  is even harder than the biharmonic map case, due to the lack of a monotonicity formula. Nothing is known except the result of Angelsberg and Pumberger \cite{AngelsbergPumberger2009regularity} which requires an assumption stronger than the energy bound. In this paper, we restrict ourselves to the critical dimension and assume $n=2m$.

In the critical dimension, following the celebrated work of H\'elein \cite{Helein1991Regularite}, we expect the weak solution to be smooth. In the biharmonic map case ($m=2$), this is confirmed by Chang, Wang and Yang \cite{ChangWangYang1999regularity} when the target manifold is sphere (see also \cite{Strzelecki2003biharmonic}) and by Wang \cite{Wang2004Biharmonic} for general target manifold. For general (extrinsic) polyharmonic maps, Goldstein, Strzelecki and Zatorska-Goldstein \cite{GoldsteinStrzeleckiZatorska2009polyharmonic} proved that a weakly polyharmonic map into sphere is smooth. For general target manifold, Moser \cite{Moser2010Regularity} obtained the same result for energy minimizers using a flow method. It is completely solved in full generality by Gastel and Scheven \cite{GastelScheven2009Regularity}. Hence, in the critical dimension, any weakly polyharmonic map is smooth.

In this paper, we are interested in the compactness of a sequence of \emph{extrinsic and smooth} polyharmonic maps with uniformly bounded energy. More precisely, let $u_i:B\to N$ be such a sequence with
\begin{equation*}
  \eng(u_i):=\eng(u_i;B) \leq \Lambda<\infty
\end{equation*}
where $B$ is the unit ball in $\RB^{2m}$.
Since $n=2m$ is the critical dimension, there is an $\varepsilon$-regularity theorem (see \autoref{thm:regularity}) which allows us to study the compactness and the blow-up of the sequence just as in the theory of harmonic maps in dimension $2$ and biharmonic maps in dimension $4$. Since the multi-bubble analysis is by now very standard (see \cite{DingTian1995Energy} or \cite{Parker1996}), we study only the single bubble case, i.e.
\begin{itemize}
  \item $u_i$ converges locally smoothly on $\bar{B}\setminus \set{0}$ to a polyharmonic map $u_\infty$; 
  \item there is a sequence $\lambda_i\to 0$ such that
    \begin{equation*}
      v_i(x)= u_i(\lambda_i x)
    \end{equation*}
    converges in $C^{\infty}_{loc}(\RB^{2m})$ to a polyharmonic map $\omega$;
  \item $\omega$ is the unique bubble.
\end{itemize}

For any small $\delta>0$ and large $R>0$, the behavior of $u_i$ on the neck domain $B_\delta\setminus B_{\lambda_i R}$ is not \emph{a priori} known by the above assumption. The study of $u_i$ in this domain is usually called the neck analysis and consists of (at least) the following well known questions: 

\begin{enumerate}[label=(Q\arabic{*}), ref=Q\arabic{*}]
  \item \label{item:Q1} Is it true that
    \begin{equation*}
      \lim_{\delta\to 0} \lim_{R\to \infty} \lim_{i\to \infty} \int_{B_\delta\setminus B_{\lambda_i R}} \abs{\nabla^{m} u_i}^2 + \abs{\nabla u_i}^{2m}  =0?
    \end{equation*}
    An affirmative answer, which is known as the \emph{energy identity} or \emph{energy quantization}, would imply that there is no energy loss in the neck domain, i.e.,
    \begin{equation}\label{eqn:Q1}
      \lim_{i\to \infty} E(u_i;B) = E(u_\infty;B) + E(\omega;\RB^{2m}),
    \end{equation}
    where 
    \[
      E(u;\Omega)=\int_\Omega|\nabla^m u|^2+|\nabla u|^{2m}.
    \]
  \item \label{item:Q2} Is it true that
    \begin{equation}
      \lim_{\delta\to 0} \lim_{R\to \infty} \lim_{i\to \infty} \osc_{B_\delta\setminus B_{\lambda_i R}} u_i =0? \label{eqn:Q2}
    \end{equation}
    An affirmative answer, which is usually called the \emph{no neck property}, means that the image of $u_\infty$ touches the image of $\omega$.
\end{enumerate}

These questions have been studied extensively for harmonic maps and biharmonic maps. We refer to \citelist{\cite{Jost1991dimensional} \cite{Parker1996} \cite{Qing1995singularities} \cite{DingTian1995Energy} \cite{Wang1996Bubble}} for harmonic maps and \citelist{\cite{Wang2004Remarks} \cite{WangZheng2013Energy} \cite{WangZheng2012Energy} \cite{HornungMoser2012Energy} \cite{Laurain} \cite{LY2016}} for biharmonic maps.

The main result of this paper is to confirm that the energy identity and the no neck property hold for the above mentioned sequence.
\begin{thm}
  \label{thm:main}
  Suppose that $\set{u_i}$ is a sequence of smooth (extrinsic) $m$-polyharmonic maps defined on $B\subset \RB^{2m}$ and that $\eng(u_i)\leq \Lambda$ for some uniform constant $\Lambda>0$. Assume that $u_i$ converges locally smoothly on $B\setminus \set{0}$ to $u_\infty$ and that there is $\lambda_i\to 0$ such that $v_i(x)\eqdef u_i(\lambda_i x)$ converges locally smoothly on $\RB^{2m}$ to $\omega$. If $\omega$ is the only bubble, then \eqref{eqn:Q1} and \eqref{eqn:Q2} holds.
\end{thm}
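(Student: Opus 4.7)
The proof strategy follows the blueprint established for harmonic maps in \cite{DingTian1995Energy, Qing1995singularities, Parker1996} and for biharmonic maps in \cite{Wang2004Remarks, WangZheng2013Energy, HornungMoser2012Energy, LY2016}, now adapted to arbitrary order $m$: a dyadic decomposition of the neck, the scale-invariant $\varepsilon$-regularity of \autoref{thm:regularity}, and a Pohozaev identity relating the radial and tangential parts of the energy on geodesic spheres. First, fix small $\delta>0$ and large $R>0$ and write the neck $B_\delta\setminus B_{\lambda_i R}$ as $\bigcup_{k=0}^{K_i} A_k$ with $A_k = B_{2^{k+1}\lambda_i R}\setminus B_{2^k\lambda_i R}$. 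The single-bubble hypothesis, combined with smooth convergence on $\bar B\setminus\set{0}$, lets me prescribe, for any $\eta>0$, that $\eng(u_i;\widetilde{A}_k)<\eta$ on a slight enlargement $\widetilde{A}_k$ once $\delta$ is small, $R$ is large and $i$ is large.

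Rescaling on each $A_k$ and invoking \autoref{thm:regularity} next yields the scale-invariant pointwise bound
\[
|\nabla^j u_i|(x) \le C_j\, |x|^{-j}\sqrt{\eta}, \qquad 1\le j\le 2m,
\]
for $x\in A_k$, with derivatives beyond order $m$ obtained by bootstrapping through the Euler--Lagrange system. The heart of the proof is a Pohozaev identity obtained by contracting the polyharmonic equation against $x\cdot\nabla u$ and integrating by parts over $B_r$. The resulting boundary integral on $\partial B_r$ is a quadratic form in the derivatives of $u$ up to order $m$, which I decompose into radial and tangential pieces. Plugging in the pointwise $\varepsilon$-regularity bounds, I expect Pohozaev to yield a three-annulus inequality of the form $T_k \le c\,(T_{k-1}+T_{k+1}) + (\text{error})$, where $T_k$ denotes the tangential energy on $A_k$ and $c<1/2$; iterating then gives geometric decay of $T_k$ in $k$, which in turn produces the energy identity \eqref{eqn:Q1} together with an improved gradient bound $|\nabla u_i|(x)\le C|x|^{-1}\sqrt{\eta_k}$ on $A_k$ with $\sum_k\sqrt{\eta_k}$ arbitrarily small.

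The no-neck assertion \eqref{eqn:Q2} then follows by summing the oscillation of $u_i$ along a radial ray through each $A_k$ (each contributing $O(\sqrt{\eta_k})$) and by controlling the oscillation along each sphere $\partial B_r$ via a Poincar\'e inequality and the tangential gradient bound. The main obstacle I anticipate is producing the Pohozaev identity in a usable form: for general $m$, the boundary integrand is a sum of many products of derivatives of mixed orders, and cleanly separating radial from tangential components calls for a careful calculation in polar coordinates. A secondary technical point is that $E$ incorporates the $L^{2m}$-norm of $\nabla u_i$, which is not directly present in $\eng$; I would control it by Gagliardo--Nirenberg interpolation together with the scale-invariant derivative bounds on dyadic annuli. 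Verifying the strict inequality $c<1/2$, once all error terms arising from integration by parts up to order $m$ have been bookkept, will be the main computational bottleneck.
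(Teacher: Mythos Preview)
Your outline has the right high-level ingredients (dyadic neck, scale-invariant $\varepsilon$-regularity, a Pohozaev-type identity), but the logical architecture is inverted relative to what actually works, and there is a genuine missing idea. You expect the Pohozaev identity, obtained by testing the equation against $x\cdot\nabla u$, to produce a three-annulus inequality $T_k\le c(T_{k-1}+T_{k+1})$ with $c<\tfrac12$ for the \emph{tangential} energy. For $m=1$ this is fine because the Pohozaev identity literally equates radial and tangential energies on each sphere. For $m\ge 2$ it does not: the boundary terms from the repeated integrations by parts contain mixed radial-tangential products of all orders up to $2m-1$, and there is no cancellation that isolates a clean tangential quadratic form with a favorable constant. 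The paper makes exactly this point: for polyharmonic maps the Pohozaev computation yields not a comparison but an ordinary differential inequality for the \emph{radial} energy along the neck, in which the tangential energy enters only as a forcing term already assumed small.

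What your proposal is missing is the mechanism that makes the tangential energy decay in the first place. In the paper this is a new three-circle lemma for $m$-polyharmonic functions on annuli with vanishing spherical average (\autoref{thm:3circle}), proved via separation of variables and a ``weak orthogonality'' estimate for the basis $\{e^{\pm(n+2(k-1))t}\phi_n^l\}$. One then observes that $w_i=u_i-\dashint_{\partial B_\rho}u_i$ is an $\eta$-approximate $m$-polyharmonic function, so the three-circle inequality persists and forces exponential decay of the tangential derivatives $X_k u_i$ along the neck. Only \emph{after} this is established does the Pohozaev argument enter, and it is used for the radial piece: the structural fact that $\Delta^m=e^{-2mt}P(\partial_t,\Delta_{S^{2m-1}})$ with $P$ containing no odd powers of $\partial_t$ and with sign-alternating coefficients $a_{2n,0}$ (\autoref{lem:P}) turns $\int_{S^{2m-1}}\Delta^m u\cdot\partial_t u=0$ into a difference inequality \eqref{eq:ode} for the radial energy, whose solution decays exponentially once the tangential forcing $\Theta$ is already known to decay. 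In short, your single Pohozaev step must be replaced by two independent mechanisms run in this order; the constant ``$c<\tfrac12$'' you hope to extract from Pohozaev is not there, and the computational bottleneck you anticipate is real and is resolved by the separate three-circle lemma rather than by bookkeeping the Pohozaev boundary terms.
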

An easy corollary of \autoref{thm:main} would be
\begin{cor}
  \label{cor:main} For $u_\infty$ and $\omega$ as above, both limits
  \begin{equation*}
    \lim_{\abs{x}\to 0} u_\infty \quad \text{and} \quad \lim_{\abs{x}\to \infty} \omega
  \end{equation*}
  exists.
\end{cor}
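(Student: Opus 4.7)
The plan is to extract both limits directly from the no-neck conclusion of \autoref{thm:main} via a Cauchy-criterion argument. The key observation is that smooth convergence of $u_i$ to $u_\infty$ on compact subsets of $\bar B\setminus\{0\}$ (respectively of $v_i(x)=u_i(\lambda_i x)$ to $\omega$ on compact subsets of $\RB^{2m}$) lets us transfer oscillation bounds from the neck region $B_\delta\setminus B_{\lambda_i R}$ to $u_\infty$ near the origin and to $\omega$ near infinity.

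First I would handle $u_\infty$. Fix $\eta>0$. By \eqref{eqn:Q2}, choose $\delta_0>0$ small and $R_0>0$ large so that, for all $i$ sufficiently large, $\osc_{B_{\delta_0}\setminus B_{\lambda_i R_0}} u_i<\eta$. Given any two points $x,y\in B_{\delta_0}\setminus\{0\}$, set $\rho=\min(\abs{x},\abs{y})/2$ and take $i$ large enough that $\lambda_i R_0<\rho$; then $x,y\in B_{\delta_0}\setminus B_{\lambda_i R_0}$, hence $\abs{u_i(x)-u_i(y)}<\eta$. Since $x,y$ lie in the compact set $\overline{B_{\delta_0}}\setminus B_\rho\subset B\setminus\{0\}$ on which $u_i\to u_\infty$ smoothly, letting $i\to\infty$ yields $\abs{u_\infty(x)-u_\infty(y)}\le\eta$. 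Thus $\osc_{B_{\delta_0}\setminus\{0\}} u_\infty\le\eta$, and since $\eta$ was arbitrary, the Cauchy criterion gives existence of $\lim_{\abs{x}\to 0}u_\infty$.

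The argument for $\omega$ is the analogous rescaled statement. The no-neck bound for $u_i$ on $B_\delta\setminus B_{\lambda_i R}$ is, under the change of variable $x=\lambda_i\tilde x$, the same as $\osc_{B_{\delta/\lambda_i}\setminus B_R} v_i<\eta$. For any $\tilde x,\tilde y\in\RB^{2m}\setminus B_{R_0}$, take $i$ large enough that $\delta_0/\lambda_i>\max(\abs{\tilde x},\abs{\tilde y})$; then $\abs{v_i(\tilde x)-v_i(\tilde y)}<\eta$. Using $v_i\to\omega$ in $C^\infty_{\loc}(\RB^{2m})$ and passing to the limit, $\abs{\omega(\tilde x)-\omega(\tilde y)}\le\eta$, so $\osc_{\RB^{2m}\setminus B_{R_0}}\omega\le\eta$. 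The Cauchy criterion again yields existence of $\lim_{\abs{x}\to\infty}\omega$.

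There is essentially no obstacle here beyond careful bookkeeping of the triple limit in \eqref{eqn:Q2}: the entire content of the corollary is packaged inside the no-neck conclusion, and one only needs to verify that for every $\eta>0$ the chosen parameters $\delta_0, R_0$ can be made to work uniformly in $x,y$ as $i\to\infty$, which is exactly the order in which the limits are taken in \autoref{thm:main}.
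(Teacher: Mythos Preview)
Your argument is correct, but it takes a different route from the paper's justification. The paper does not argue from the no-neck conclusion at all: it simply observes that \autoref{cor:main} follows from the removable singularity theorem for polyharmonic maps in the critical dimension, which in turn is a consequence of the regularity of weak solutions due to Gastel and Scheven (and alternatively of the argument in Section~6 of \cite{LY2016}). That route gives more---it yields a smooth extension of $u_\infty$ across the origin and of $\omega$ across infinity---but relies on machinery outside the paper.

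Your approach, by contrast, is entirely self-contained: you extract the limits directly from \eqref{eqn:Q2} via a Cauchy-criterion argument, using only the local smooth convergence of $u_i$ and $v_i$. This is arguably more in the spirit of the phrase ``an easy corollary of \autoref{thm:main}'', and it requires no external regularity theory. The trade-off is that you obtain only the bare existence of the limits, not the smooth removability of the singularity.
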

\autoref{cor:main} is an obvious consequence of the removable of singularity theorem, which is proved first by \cite{SU1981} in the harmonic map case and is obtained as a consequence of the regularity of weak solution in the critical dimension for biharmonic maps in \cite{Wang2004Biharmonic}. Notice that the regularity of weak solution of polyharmonic maps in the critical dimension is obtain by \cite{GastelScheven2009Regularity}, hence a removable of singularity theorem should be a corollary of it. We also note that the argument in Section 6 of \cite{LY2016} can be used to give another proof of removable of singularity theorem for polyharmonic maps as the authors did for biharmonic maps.

The proof of \autoref{thm:main} follows the general framework in \cite{LY2016}, which was in turn motivated by the original idea of \cite{SU1981} and \cite{QT1997}. As is well known by now, the main challenge of the neck analysis is that the length of the neck, or equivalently, the ratio $\delta/ (\lambda_i R)$, is out of control. The proof consists of two parts. The idea of the first part is to compare the map on the neck domain (using the cylinder coordinates) with its average on the spheres and notice that the difference is some approximate $m$-polyharmonic function, for which a three-circle lemma holds. This lemma then implies that the energy of $u_i$ involving (at least one) derivatives along the spherical direction decays exponentially along the neck (see \eqref{eqn:tangent}) so that when we do integration on the neck, the length of it does not matter. The second part is some far reaching generalization of the classical Pohozaev identity, which was introduced to the study of the neck analysis of harmonic maps, by \cite{LW1998}.

While many arguments in \cite{LY2016} generalizes without much effort to the case of polyharmonic maps, the two most important ingredients in the argument, namely, the three-circle lemma and the Pohozaev type argument needs completely different proofs. It turns out that the generalization from $m=2$ (the biharmonic map case) to $m>2$ exploits some fine structure of the operator $\Delta^m$. In particular, we obtain the following three-circle lemma for $m$-polyharmonic function, which is new to the best of our knowledge.

\begin{thm}[\autoref{lem:3circle}]
  \label{thm:3circle} Let $B$ be the unit ball in $\RB^{2m}$. There exists some constant $L$ depending only on $m$, such that for any nonzero smooth function $u$ on $B\setminus \set{0}$ satisfying
  \begin{equation*}
    \Delta^m u =0 \quad \text{on} \quad B\setminus \set{0}
  \end{equation*}
  and
  \begin{equation*}
    \int_{\partial B_\rho} u =0, \quad\forall \rho\in(0,1),
  \end{equation*}
  the following \emph{three-circle property} holds: for each $i\in \mathbb Z^+$,
  \begin{equation*}
    2F_i(u) < e^{-L} \left( F_{i-1}(u)+ F_{i+1}(u) \right),
  \end{equation*}
  where
  \begin{equation*}
    F_i(u) = \int_{A_i} \frac{u^2}{\abs{x}^{2m}} dx \quad \text{and} \quad A_i = B_{e^{-(i-1)L}} \setminus B_{e^{-iL}}. 
  \end{equation*}
\end{thm}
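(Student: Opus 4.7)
My plan is to reduce the three-circle inequality to a family of one-dimensional ODE problems by passing to cylindrical coordinates and expanding in spherical harmonics. Introduce $t = -\log|x|$ and $\theta = x/|x|$, so that $|x|^{-2m}dx = dt\,d\theta_{S^{2m-1}}$ and, setting $v(t,\theta) := u(e^{-t}\theta)$, one finds $F_i(u) = \int_{(i-1)L}^{iL}\int_{S^{2m-1}} v^2\,d\theta\,dt$. Expand $v(t,\theta)=\sum_{k,l}c_{k,l}(t)Y_{k,l}(\theta)$ in an $L^2(S^{2m-1})$-orthonormal basis of spherical harmonics. The hypothesis $\int_{\partial B_\rho}u=0$ kills the $k=0$ component, and orthogonality gives $F_i(u)=\sum_{k\geq 1,l}\int_{(i-1)L}^{iL}c_{k,l}(t)^2\,dt$. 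Since a nonnegative sum of three-circle inequalities is itself a three-circle inequality, it suffices to establish, with a single $L$ valid for all $k\geq 1$, the one-dimensional version for each $c=c_{k,l}$.

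To extract the ODE for $c$, iterate the elementary identity $\Delta(r^{s}Y_k)=(s-k)(s+k+2m-2)r^{s-2}Y_k$; the condition $\Delta^m u=0$ for $u=a(r)Y_k$ is then equivalent to the constant-coefficient ODE $\prod_{j=0}^{m-1}(D_t^2-\alpha_j^2)c=0$ with $\alpha_j:=k+2j$, whose general solution is $c(t)=\sum_{j=0}^{m-1}\bigl(a_je^{-\alpha_jt}+b_je^{\alpha_jt}\bigr)$. All $2m$ characteristic exponents $\pm\alpha_j$ are distinct and satisfy $\alpha_j\geq k\geq 1$. Because the ODE is translation-invariant in $t$, it suffices to show
\begin{equation*}
\Phi(c) := \int_{-L}^{0} c^2\,dt + \int_{L}^{2L} c^2\,dt - 2e^L\int_{0}^{L} c^2\,dt > 0
\end{equation*}
for every nonzero such $c$.

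Expanding $c(t)^2=\sum_\gamma d_\gamma e^{-\gamma t}$, the exponents lie in the finite set $\{0\}\cup\{\pm 2\alpha_j,\ \pm(\alpha_i+\alpha_j),\ \pm(\alpha_i-\alpha_j)\mpt i\neq j\}$, all even integers, and $\Phi(c)=\sum_\gamma d_\gamma T_\gamma$ with explicit $T_\gamma$ which, after centering the three intervals about the origin, reduces to $T_\gamma=\tfrac{4\sinh(\gamma L/2)}{\gamma}\bigl(\cosh(\gamma L)-e^L\bigr)$ for $\gamma\neq 0$, while $T_0=2L(1-e^L)<0$. For any $\gamma\neq 0$ in the above set one has $|\gamma|\geq 2$, so $T_\gamma>0$ as soon as $L$ is large enough that $\cosh(2L)>e^L$; only $T_0$ contributes negatively, with coefficient $d_0=2\sum_j a_jb_j$. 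The decisive quantitative point is that $T_{2\alpha_j}\sim e^{3\alpha_jL}/(2\alpha_j)$ dwarfs $|T_0|\sim 2Le^L$, so that each diagonal $2\times 2$ block acting on $(a_j,b_j)$ with diagonal entries $T_{2\alpha_j}$ and off-diagonal $T_0$ is positive definite uniformly in $k\geq 1$; the remaining inter-index cross terms involve $T_\gamma$ with $|\gamma|\geq 2$ and are absorbed into the diagonal by Cauchy--Schwarz. The main obstacle is to verify that a single $L$ works for every $k\geq 1$: the tightest case is $k=1$, where the differences $\alpha_i-\alpha_j$ are as small as $\pm 2$ and the diagonal dominance margin is slimmest. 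I would handle this either by a direct computation of the $2m\times 2m$ Gram matrix for $k=1$ followed by a monotonicity argument as $k$ grows (since increasing $k$ only increases all $|\alpha_j|$ and therefore the diagonal weights $T_{2\alpha_j}$), or by a contradiction/compactness argument, rescaling $t\mapsto t/k$ and passing to the limit ODE $(D^2-1)^m c=0$ as $k\to\infty$ where the positive-definiteness can be checked cleanly.
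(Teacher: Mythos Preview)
Your reduction to the one-dimensional problem via cylindrical coordinates and spherical harmonic expansion is correct and is exactly what the paper does; the exponents $\alpha_j=k+2j$ and the general solution $c(t)=\sum_j(a_je^{-\alpha_jt}+b_je^{\alpha_jt})$ agree with the paper's \autoref{lem:polyharmonic}. Your formula for $T_\gamma$ is also right.

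The gap is in the final step. Your claim that the inter-index cross terms ``are absorbed into the diagonal by Cauchy--Schwarz'' does not go through for large $k$, and your heuristic that ``the tightest case is $k=1$'' with monotonicity thereafter is backwards. Consider the $(a_i,a_j)$ cross term with coefficient $T_{\alpha_i+\alpha_j}$: since $T_\gamma\sim e^{3|\gamma|L/2}/|\gamma|$ for large $L$, the relevant ratio is
\[
\frac{T_{\alpha_i+\alpha_j}}{\sqrt{T_{2\alpha_i}T_{2\alpha_j}}}\;\longrightarrow\;\frac{2\sqrt{\alpha_i\alpha_j}}{\alpha_i+\alpha_j},
\]
which tends to $1$ as $k\to\infty$ (because $\alpha_i/\alpha_j\to 1$). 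Each diagonal entry must absorb $m-1$ such cross terms, so for $m\geq 2$ the naive Cauchy--Schwarz bound loses by a factor close to $m-1$, not a factor $<1$. Thus the Gram matrix becomes \emph{more} nearly singular, not more diagonally dominant, as $k$ grows. Your compactness alternative has the same difficulty: after rescaling $t\mapsto t/k$ the interval lengths become $kL\to\infty$, so there is no compact limiting problem on a fixed domain.

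What the paper does at this point is prove a quantitative ``weak orthogonality'' lemma (\autoref{lem:claim}): the smallest eigenvalue of the Gram matrix of the normalized exponentials $e^{\alpha_jt}$ on an interval of length $L$ is bounded below by $\tilde Ce^{-kL/2}$. This decay in $k$ is precisely the degeneration you are missing, and the point is that it is slow enough: when one compares the outer annuli to the middle annulus, the outer integrals carry an extra factor of order $e^{2\alpha_jL}\geq e^{2kL}$, which beats both the $e^{-kL/2}$ loss from weak orthogonality and the $e^{L}$ weight in the three-circle inequality. The paper proves the eigenvalue bound by writing the Gram matrix as a perturbation of the explicit Cauchy-type matrix $\bar M_{ij}=2\sqrt{\alpha_i\alpha_j}/(\alpha_i+\alpha_j)$ and bounding $\det\bar M$ from below as a rational function of $k$. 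That lemma is the genuine content you would still need to supply.
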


The special case $m=2$ is proved in \cite{LY2016}. However, the proof there is by brutal force and hence not instructive. Here we emphasize that this three-circle lemma is related to the \emph{weak orthogonality} of a basis of polyharmonic functions on annular domains. 

The paper is organized as follows. In \autoref{sec:framework}, we follow the proof of \cite{LY2016} to give an outline of the proof of \autoref{thm:main} and state a few results which generalizes to the $m$-polyharmonic maps trivially. It shall be clear by the end of that section that the proof of \autoref{thm:main} is complete except for the three circle \autoref{lem:3circle}, which we prove in \autoref{sec:3circle}, and the Pohozaev type argument, which we give in \autoref{sec:pohozaev}. 

\section{A framework of neck analysis}\label{sec:framework}

Suppose $\set{u_i}$ is the sequence in \autoref{thm:main}. Since $N$ is compact, $\set{u_i}$ are uniformly bounded. Together with the uniform bound of $\eng(u_i)$, it implies by interpolation and Sobolev embedding $W^{m,2}\embedto W^{1,2m}$ that
\begin{equation*}
  E(u_i)\eqdef \int_B \abs{\nabla^{m} u_i}^2 + \abs{\nabla u_i}^{2m}
\end{equation*}
is also uniformly bounded so that the $\varepsilon$-regularity theorem (see \autoref{thm:regularity}) can be applied. 

By the assumption of \autoref{thm:main}, $\omega$ is the only bubble and hence by an argument in \cite{DingTian1995Energy}, we have that: for any $\varepsilon>0$, there exist small $\delta>0$ and large $R>0$ such that
\begin{equation}\label{eqn:DT}
  \limsup_{i\to \infty} \int_{B_{2\rho}\setminus B_{\rho/2}}\abs{\nabla^{m} u_i}^2 + \abs{\nabla u_i}^{2m} < \varepsilon
\end{equation}
for any $\rho \in (2\lambda_i R, \delta/2)$. For sufficiently small $\varepsilon$ and any fixed $\rho$ as above, by setting $v(x)= u_i (\rho x)$ and applying \autoref{thm:regularity} to $v$ as a map defined on $B_2\setminus B_{1/2}$, we obtain, for any $l\in \mathbb N$,
\begin{equation}\label{eqn:good}
  \max_{\partial B_\rho}\rho^{l} \abs{\nabla^l u_i}\leq  C \varepsilon^{1/(2m)}.
\end{equation}

The estimate \eqref{eqn:good} is at the borderline of being able to answer \eqref{item:Q1} and \eqref{item:Q2} and what we need is just any slight improvement of \eqref{eqn:good}. More precisely, \autoref{thm:main} follows from
\begin{nclaim}\label{clm:better}
  There exists some $\sigma>0$ such that
  \begin{equation}
    \max_{\partial B_\rho} \rho^l \abs{\nabla^l u_i} \leq C(\sigma,l) \varepsilon^{1/(2m)} \left( \left(\frac{\delta}{\rho}\right)^{-\sigma} + \left(\frac{\rho}{\lambda_i R}\right)^{-\sigma} \right)
    \label{eqn:better}
  \end{equation}
  for $\rho\in (2\lambda_i R, \delta/2)$ and $l\in \mathbb N$.
\end{nclaim}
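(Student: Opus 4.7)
The plan is to decompose $u_i$ on each dyadic annulus inside the neck into its spherical average $\bar u_i(r)\eqdef \dashint_{\pt B_r} u_i$ and the remainder $w_i\eqdef u_i-\bar u_i$. These two pieces will be controlled by completely different mechanisms: $w_i$ has vanishing spherical average and is amenable to the three-circle \autoref{thm:3circle}, while $\bar u_i$ is invisible to that lemma and must be treated via the Pohozaev type argument developed in \autoref{sec:pohozaev}. Once the required $L^2$ decay in $\rho$ is in hand for both pieces, the pointwise bound \eqref{eqn:better} follows by a rescaled application of \autoref{thm:regularity} on the annulus $B_{2\rho}\setminus B_{\rho/2}$, which trades $L^2$ information for $C^l$ control at the cost of a fixed dilation.

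For the non-average part I would pass to cylindrical coordinates $x=e^{-t}\theta$, under which the conformally adjusted operator $\abs{x}^{2m}\Delta^m$ has constant coefficients on the cylinder $\RB\times S^{2m-1}$. The key point is that $w_i$ is \emph{approximately} $m$-polyharmonic: the Euler--Lagrange equation writes $\Delta^m u_i$ as a schematic nonlinear expression $G(u_i,\nabla u_i,\ldots,\nabla^{2m-1}u_i)$ built from the second fundamental form of $N$, and since spherical averaging commutes with $\Delta$ one has $\Delta^m w_i=G-\overline G$. Using \eqref{eqn:good} to bound $\abs{\nabla^l u_i}\leq C\varepsilon^{1/(2m)}\rho^{-l}$ pointwise on each $\pt B_\rho$, the size of this inhomogeneous term, weighted as in the definition of $F_i$, can be dominated by a small multiple of $F_{i-1}(w_i)+F_{i+1}(w_i)$. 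Hence for $\varepsilon$ sufficiently small a perturbed version
\begin{equation*}
 2F_i(w_i) \leq \bigl(e^{-L}+C\varepsilon^{1/m}\bigr)\bigl(F_{i-1}(w_i)+F_{i+1}(w_i)\bigr)
\end{equation*}
still gives strict three-circle dominance with some new constant $e^{-L'}<1$. Iterating this inequality forward from $\rho\sim\lambda_i R$ and backward from $\rho\sim\delta$ yields the double-sided exponential decay of $F_i(w_i)$ at some geometric rate $\sigma>0$, which is exactly the contribution to \eqref{eqn:better} arising from the tangential part of $u_i$.

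The radial average $\bar u_i$ is invariant under the spherical Laplacian and is therefore invisible to the three-circle lemma. Here the Pohozaev identity for $m$-polyharmonic maps from \autoref{sec:pohozaev} supplies a boundary quantity $P(\rho)$ whose variation along the neck is bounded by the tangential energy of $u_i$, which has already been shown to decay exponentially from the $F_i(w_i)$ bound above. This gives radial control of $\bar u_i$ at the same rate, and assembling the radial and tangential information produces $L^2$ decay of $u_i$ on each dyadic annulus. The pointwise estimate \eqref{eqn:better} follows by invoking \autoref{thm:regularity} after rescaling the annulus to unit size. The principal obstacle, and the reason the paper needs two substantial later sections, is twofold: verifying that the nonlinear remainder $G-\overline G$ is genuinely small enough in the $F_i$-norm to preserve the three-circle inequality under perturbation, and establishing a Pohozaev type monotonicity for the operator $\Delta^m$ with the correct sign on annular domains -- both markedly more delicate than in the biharmonic case $m=2$, where one can essentially get by with explicit computation.
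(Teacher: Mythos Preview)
Your outline matches the paper's strategy: split $u_i=\bar u_i+w_i$, feed $w_i$ into the three-circle machinery for the tangential decay, and treat the radial direction by a Pohozaev-type argument. Two points need correction, however.

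Your account of the Pohozaev step is essentially the harmonic-map picture, and that is \emph{not} what happens for $m\ge 2$. The identity \eqref{eq:pohozaev:EL} does yield a conserved quantity $\int_{S^{2m-1}}(Q+\Theta)$, but $Q$ is an \emph{alternating} combination $\sum_n(-1)^{n+1}(n-\tfrac12)a_{2n,0}\abs{\partial_t^n u_i}^2$ together with total-$t$-derivative terms, so knowing that $Q+\Theta$ is constant and $\Theta$ small does not by itself bound any single radial derivative, and certainly does not hand you ``radial control at the same rate''. The paper instead integrates \eqref{eq:pohozaev:alternating} over growing symmetric intervals $[t_0-t,t_0+t]$ to obtain the difference inequality \eqref{eq:ode}, namely $F(t)\le\int\abs{\Theta}+C\bigl(F(t+1)-F(t-1)\bigr)$, and the exponential decay of the radial energy is then extracted from the intrinsic structure of this recursion via \autoref{lem:good}. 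As the paper stresses, the radial energy has its own internal decay mechanism; the tangential term $\Theta$ enters only as a forcing.

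Second, the final lift from $L^2$ to $C^l$ is not done through \autoref{thm:regularity}: that theorem takes as input the annular energy $\int\abs{\nabla^m u_i}^2+\abs{\nabla u_i}^{2m}$, which is precisely the quantity whose decay you are trying to prove, so invoking it here is circular. The paper's device is \autoref{lem:high}: both $X_k u_i$ and $\rho\partial_\rho u_i$ satisfy a \emph{linear} elliptic system---the linearization of the polyharmonic map equation along the one-parameter isometry group generated by $X_k$ or by dilation---with coefficients controlled by \eqref{eqn:good}. Standard interior estimates for that linear system upgrade the $L^2$ decay of $X_k u_i$ and $\partial_t u_i$ directly to pointwise $C^l$ decay, and the Killing-field decomposition \eqref{eqn:decomp} then reassembles these pieces into \eqref{eqn:better}.
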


\begin{rmk}
  \autoref{clm:better} remains equivalent if we replace the left hand side of \eqref{eqn:better} by
  \begin{equation*}
    \max_{\partial B_\rho} \rho^{l-1}\abs{\nabla^{l-1} (\rho \nabla) u_i}.
  \end{equation*}
  To see this, we need to show that the following two quantities
  \begin{equation*}
    \max_{l=1,\ldots,l_0} \max_{\partial B_\rho} \rho^l \abs{\nabla^l u}
  \end{equation*}
  and
  \begin{equation*}
    \max_{l=1,\ldots,l_0} \max_{\partial B_\rho} \rho^{l-1} \abs{\nabla^{l-1} (\rho \nabla) u}.
  \end{equation*}
  can bound each other up to some constant,

  While this is trivial for $l_0=1$, the general case is proved by induction and the proof, which can be simplified by scaling $\partial B_\rho$ to $\partial B_1$, is omitted. 
\end{rmk}

\subsection{The radial-tangential decomposition}\label{sub:decompose}
As usual, we would like to decompose the energy into the radial part and the tangent (to the sphere) part. For example, in the harmonic map case (in dimension two), we have
\begin{equation*}
  \abs{\nabla u}^2 = \abs{\partial_\rho u}^2 + \abs{\rho^{-1}\partial_\theta u}^2.
\end{equation*}

However, in the polyharmonic map case, an analog of the above equation is not obvious due to two reasons: first, the \textit{tangent} derivative (for $m>1$) can not be expressed easily by partial derivatives as $\partial_\theta$ does in the case of $S^1$, and second, the energy involves higher order derivatives instead of just the gradient. 

To solve this problem, recall that $S^{2m-1}$ is the homogeneous space $SO(2m)/SO(2m-1)$. Let $\set{\alpha_k}_{k=1}^{m(2m-1)}$ be an orthonormal basis of the Lie algebra $\mathfrak{so}(2m)$, which induces Killing fields $\set{X_k}_{k=1}^{m(2m-1)}$ on $S^{2m-1}$. By \cite{Helein1991Regularity}*{Lemma~2}, there exist $m(2m-1)$ smooth vector fields $Y_1,\cdots,Y_{m(2m-1)}$ on $S^{2m-1}$ such that for any $y\in S^{2m-1}$ and any tangent vector $V\in T_y S^{2m-1}$, we have
\begin{equation*}
  V= \sum_{k=1}^{m(2m-1)} \inner{V,X_k(y)}_{S^{2m-1}} Y_k(y).
\end{equation*}

Next, we extend the above construction to $\RB^{2m}\setminus \set{0}$. For that purpose, we identify $S^{2m-1}$ with $\partial B_1$, the boundary of the unit ball, and notice that there is a natural diffeomorphism between $\partial B_1$ and $\partial B_\rho$ for all $\rho \ne 0$, which we call $\Psi_\rho$. By using $\Psi_\rho$, $X_k$ and $Y_k$ are extended to be vector fields on $\RB^{2m}\setminus \set{0}$, which are tangent to $\partial B_{\abs{x}}$ at $x\in \RB\setminus \set{0}$. With these definitions and noticing the fact that $\Psi_\rho$ pulls back the inner product on $\partial B_1$ to be 
\begin{equation*}
  \frac{1}{\rho^2} \inner{\cdot,\cdot}_{\partial B_\rho},
\end{equation*}
we have
\begin{equation*}
  V = \sum_{k=1}^{m(2m-1)} \frac{1}{\abs{x}^2}\inner{V,X_k(x)} Y_k(x),
\end{equation*}
for any $x\in \RB^{2m}\setminus \set{0}$ and $V \in T_x (\partial B_{\abs{x}})$. Here we have omitted the subscript of the inner product, because the inner product of $\partial B_\rho$ is induced from that of $\RB^{2m}$ and we now take every vector as living in $\RB^{2m}$.

Finally, by adding the radial component, we arrive at the decomposition formula:
\begin{equation}
  V= \inner{V,\partial_\rho} \partial_\rho + \sum_{k=1}^{m(2m-1)} \frac{1}{\abs{x}^2}\inner{V,X_k(x)} Y_k(x)
  \label{eqn:decompose}
\end{equation}
for all $V\in \RB^{2m}$ and $x\in \RB^{2m}\setminus \set{0}$. In particular, if we let $V=\nabla u_i$ and multiply both sides of \eqref{eqn:decompose} by $|x|=\rho$, we find
\begin{equation}\label{eqn:decomp}
  \rho\nabla u_i = (\rho \partial_\rho u) \partial_\rho + \sum_{k=1}^{m(2m-1)}(X_k u_i) \frac{Y_k}{\rho}.
\end{equation}
As vectors in $\RB^{2m}$, $\partial_\rho(x)$ and $\frac{Y_k}{\rho}(x)$ is independent of $\abs{x}$, which implies that for any $l\geq 0$, there is a constant $C_l$ such that
\begin{equation}\label{eqn:goodvector}
  \max_{\pt B_\rho}\abs{\rho^l \nabla^l (\partial_\rho)}\leq C_l,\quad \max_{\pt B_\rho}\abs{\rho^l \nabla^l \left(\frac{Y_k}{\rho}\right)}\leq C_l.
\end{equation}

By \eqref{eqn:goodvector} and the remark after \autoref{clm:better}, we can take $\rho^{l-1}\nabla^{l-1}$ on both sides of \eqref{eqn:decomp} to see that \autoref{clm:better} (and hence \autoref{thm:main}) is a consequence of
\begin{nclaim}
  There exists some $\sigma>0$ such that for any $l\in \mathbb N$ and $\rho\in (2\lambda_i R, \delta/2)$,
  \begin{align}
    \max_{k=1,\ldots,m(2m-1)} \max_{\partial B_\rho} \rho^l \abs{\nabla^{l} (X_k u_i)} &\leq C(\sigma,l) \varepsilon^{1/(2m)} \left( \left(\frac{\delta}{\rho}\right)^{-\sigma} + \left(\frac{\rho}{\lambda_i R}\right)^{-\sigma} \right)
    \label{eqn:tangent1}\\
    \intertext{and}
    \max_{\partial B_\rho} \rho^l \abs{\nabla^l (\rho\partial_\rho u_i)} &\leq C(\sigma,l) \varepsilon^{1/(2m)}\left( \left(\frac{\delta}{\rho}\right)^{-\sigma} + \left(\frac{\rho}{\lambda_i R}\right)^{-\sigma} \right).
    \label{eqn:radial1}
  \end{align}
\end{nclaim}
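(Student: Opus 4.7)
The plan is to treat the tangential bound \eqref{eqn:tangent1} and the radial bound \eqref{eqn:radial1} by two essentially independent mechanisms: the tangential decay will come from a perturbed three--circle iteration based on \autoref{thm:3circle}, while the radial decay will be extracted from a Pohozaev--type identity, and it is the latter that makes use of \eqref{eqn:tangent1} as input.

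For the tangential estimate, I would set
\[
\bar{u}_i(\rho) = \dashint_{\partial B_\rho} u_i\, dS, \qquad w_i = u_i - \bar{u}_i,
\]
so that $w_i$ satisfies $\int_{\partial B_\rho} w_i = 0$ on every sphere and so that $X_k u_i = X_k w_i$ for all Killing fields $X_k$. Since $u_i$ is a smooth extrinsic $m$-polyharmonic map, $\Delta^m u_i$ is a universal polynomial nonlinear expression in $u_i, \nabla u_i, \ldots, \nabla^{2m-1} u_i$ involving the second fundamental form of $N$, so \eqref{eqn:good} forces the scale-invariant $L^2$ size of $\Delta^m w_i$ on each annulus $A_j \subset B_\delta \setminus B_{\lambda_i R}$ to be bounded by $C\varepsilon^{1/m}$ times the corresponding size of $w_i$ itself on a slightly larger annulus. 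Combining this with \autoref{thm:3circle} applied componentwise to a reference polyharmonic comparison function, I would derive a perturbed three--circle inequality of the schematic form
\[
2 F_j(w_i) \le e^{-L}\bigl(F_{j-1}(w_i) + F_{j+1}(w_i)\bigr) + C\varepsilon^{1/m}\bigl(F_{j-1}(w_i) + F_j(w_i) + F_{j+1}(w_i)\bigr),
\]
where $F_j$ is as in \autoref{thm:3circle}. For $\varepsilon$ sufficiently small, the perturbation is absorbed and a standard two-sided iteration argument yields exponential decay
\[
F_j(w_i) \le C \varepsilon^{2/m}\bigl(e^{-\sigma j} + e^{-\sigma(N-j)}\bigr)
\]
for some $\sigma = \sigma(m)>0$, where $N$ indexes the inner end of the neck. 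Finally, since $X_k u_i = X_k w_i$, rescaling the annulus at radius $\rho$ to unit size and applying the $\varepsilon$-regularity \autoref{thm:regularity} to $v(x) = w_i(\rho x)$ upgrades this integral decay into the pointwise \eqref{eqn:tangent1}: the gain of one spherical derivative converts $F_j$ into control of $\rho^l|\nabla^l(X_k u_i)|$, and the bound at each dyadic radius is exactly the prescribed $(\delta/\rho)^{-\sigma} + (\rho/(\lambda_i R))^{-\sigma}$.

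For the radial estimate \eqref{eqn:radial1}, the natural test vector is the dilation generator $x \cdot \nabla u_i$, which is conformal Killing and whose pairing with the Euler--Lagrange system of $\mathcal{E}$ produces, after repeated integration by parts, a divergence identity of Pohozaev type on $B_\rho$. The boundary integral on $\partial B_\rho$ separates into a radial contribution of the form $\int_{\partial B_\rho} |\rho \partial_\rho u_i|^{\text{quadratic}}$, a cross-term mixing $\rho \partial_\rho$ with tangential derivatives, and a purely tangential remainder; by \eqref{eqn:tangent1} the last two are already controlled by $\varepsilon^{1/m}((\delta/\rho)^{-\sigma} + (\rho/(\lambda_i R))^{-\sigma})$. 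The interior bulk term is a conservation-law remainder that must be shown to vanish to leading order, after which one obtains an integrated bound on $\rho \partial_\rho u_i$ on $\partial B_\rho$; rescaling and reapplying \autoref{thm:regularity} gives the pointwise bound \eqref{eqn:radial1}. This is the content of \autoref{sec:pohozaev}.

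The main obstacle I expect is the passage from the exact three--circle inequality for polyharmonic functions in \autoref{thm:3circle} to its perturbed form for the approximately polyharmonic $w_i$. The difficulty is that the nonlinear source $\Delta^m w_i$ depends on \emph{all} derivatives of $u_i$ up to order $2m-1$, including radial ones, and one has to trade these off against $F_{j\pm 1}(w_i)$ in a way that respects the scale invariance of the iteration; naive elliptic estimates lose a derivative and break the iteration. Equally delicate is the Pohozaev step, where the higher--order nature of $\Delta^m$ means the conservation identity contains many boundary terms of mixed radial--tangential type, and considerable care is needed to identify the one whose sign controls $\rho \partial_\rho u_i$ and to show that all others can be absorbed using \eqref{eqn:tangent1}.
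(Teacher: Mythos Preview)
Your overall two-part strategy matches the paper's: tangential decay via a perturbed three-circle argument applied to $w_i=u_i-\bar u_i$, then radial decay via a Pohozaev-type identity using the tangential decay as input. Two points, however, need correction.

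For the tangential upgrading, you propose applying \autoref{thm:regularity} to $v(x)=w_i(\rho x)$, but $w_i$ is not a polyharmonic map into $N$ (it does not even take values in $N$), so \autoref{thm:regularity} is inapplicable. The paper instead uses an $L^p$ estimate for the approximate-polyharmonic equation \eqref{eqn:approx} satisfied by $w_i$ to control $X_k u_i=X_k w_i$ pointwise, and then upgrades to higher derivatives via \autoref{lem:high}, which exploits the fact that $X_k u_i$ satisfies the \emph{linearized} polyharmonic map equation, a linear elliptic system with coefficients controlled by \eqref{eqn:good}. This is a minor but genuine fix.

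The radial part has a more substantive gap. You expect the Pohozaev identity to split into a radial boundary term with a good sign, mixed terms, and purely tangential terms, with the latter two absorbed by \eqref{eqn:tangent1}. In the paper's identity \eqref{eq:pohozaev:alternating} there is indeed a good-sign radial sum and a tangential piece $\Theta$, but there is also a third piece: a total $t$-derivative of products $\partial_t^{k+l-1}u_i\cdot\partial_t^{2n-k-l}u_i$ of \emph{purely radial} derivatives. This term is not controlled by the tangential decay and does not ``vanish to leading order''. Instead, integrating over $(t_0-t,t_0+t)$ turns it into boundary contributions that, via \autoref{lem:high}, are bounded by $C(F(t+1)-F(t-1))$; this produces the difference inequality \eqref{eq:ode}, and a separate iteration (\autoref{lem:good}) is then required to extract exponential decay. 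The paper explicitly flags this as a key departure from the harmonic-map case: the radial energy is not directly comparable to the tangential energy, but has its own internal decay mechanism driven by \eqref{eq:ode}. Your sketch misses this mechanism entirely.
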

In the rest of this paper, we often work in cylinder coordinates $(t,\theta)$, where $t=\log \rho$ and $\theta \in S^{2m-1}$, so that \eqref{eqn:tangent1} and \eqref{eqn:radial1} become 
\begin{equation}
  \max_{k=1,\ldots,m(2m-1)} \max_{\partial B_\rho} \rho^l \abs{\nabla^{l} (X_k u_i)} \leq C(\sigma,l) \varepsilon^{1/(2m)} \left( e^{-\sigma(\log \delta - t)} + e^{-\sigma (t-\log \lambda_i R)} \right)
  \label{eqn:tangent}
\end{equation}
and
\begin{equation}
  \max_{\partial B_\rho} \rho^l \abs{\nabla^l (\partial_t u_i)}  \leq C(\sigma,l) \varepsilon^{1/(2m)}\left( e^{-\sigma(\log \delta - t)} + e^{-\sigma (t-\log \lambda_i R)} \right).
  \label{eqn:radial}
\end{equation}

In the next two subsections, we give the proofs of \eqref{eqn:tangent} and \eqref{eqn:radial} respectively. The proofs are almost complete except for a few \emph{key} lemmas that we prove  in \autoref{sec:3circle} and \autoref{sec:pohozaev}. 

\subsection{The decay of tangential derivatives}
We prove \eqref{eqn:tangent} in this subsection. The idea is the same as in the biharmonic map case \cite{LY2016}, except that we now need the three circle lemma for $m$-polyharmonic functions instead of biharmonic functions. Recall that a function $u$ is $m$-polyharmonic if and only if $\Delta^m u=0$.
\begin{lem}[Three Circle Lemma, \autoref{thm:3circle}]\label{lem:3circle}
  Suppose $B\subset\RB^{2m}$ is the unit ball. There exists a constant $L$ depending only on $m$, such that for any non-zero smooth $m$-polyharmonic function $u$ on $B\setminus\set{0}$ satisfying (in the polar coordinates)
  \begin{equation}\label{eq:3circ-lem:condi}
    \int_{\partial B_\rho}u=0 \quad \forall \rho \in (0,1),
  \end{equation}
  the following \emph{three-circle property} holds
  \begin{equation}\label{eq:3circ}
    2F_i(u)< e^{-L}\sbr{F_{i-1}(u)+F_{i+1}(u)},\quad\forall i=1,2,\ldots,
  \end{equation}
  where
  \begin{equation*}
    F_i(u)\eqdef \int_{A_i} \frac{u^2}{\abs{x}^{2m}} dx \quad \text{and} \quad A_i\eqdef B_{e^{-(i-1)L}} \setminus B_{e^{-iL}}.
  \end{equation*}
\end{lem}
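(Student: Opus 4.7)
The plan is to reduce the three-circle property to a family of concrete, finite-dimensional quadratic-form inequalities indexed by the spherical-harmonic mode, and to prove these uniformly in the mode by exploiting the explicit structure of radial $m$-polyharmonic solutions.

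First, expand $u$ in spherical harmonics on $S^{2m-1}$,
\[
u(\rho,\theta)=\sum_{k\geq 1}\sum_{j} f_{k,j}(\rho)\,Y_{k,j}(\theta),
\]
where $\{Y_{k,j}\}_j$ is an $L^2$-orthonormal basis of spherical harmonics of degree $k$; the $k=0$ component vanishes by the mean-value hypothesis \eqref{eq:3circ-lem:condi}. Using $dx=\rho^{2m-1}d\rho\,d\theta$ and the cylindrical change $t=\log\rho$ gives
\[
F_i(u)=\sum_{k,j}\int_{-iL}^{-(i-1)L}g_{k,j}(t)^2\,dt,\qquad g_{k,j}(t):=f_{k,j}(e^t).
\]
Since the three-circle inequality \eqref{eq:3circ} is additive in non-negative summands, it suffices to prove strict inequality for each nonzero $g_{k,j}$ with $k\geq 1$.

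Second, separating variables in $\Delta^m u=0$ yields the radial ODE $L_{\lambda_k}^m f_{k,j}=0$ with $\lambda_k=k(k+2m-2)$ and $L_\lambda=\partial_\rho^2+\frac{2m-1}{\rho}\partial_\rho-\frac{\lambda}{\rho^2}$. The Euler ansatz $f=\rho^\alpha$ identifies the $2m$ characteristic exponents
\[
S_k=\{\pm(k+2j):j=0,1,\dots,m-1\},
\]
all distinct because $k\geq 1$ (so no $\log\rho$ factors arise). Hence $g_{k,j}(t)=\sum_{\alpha\in S_k}c_\alpha e^{\alpha t}$ lies in the explicit $2m$-dimensional space $V_k=\mathrm{span}\{e^{\alpha t}:\alpha\in S_k\}$, which is invariant under translations in $t$. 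By this translation-invariance the three-circle property reduces, for every $i$, to the positive-definiteness on $V_k$ of the single quadratic form
\[
Q_L(g):=\int_{-L}^0 g^2\,dt+\int_L^{2L} g^2\,dt-2e^L\int_0^L g^2\,dt.
\]

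Third, a direct computation yields the matrix of $Q_L$ in the basis $\{e^{\alpha t}\}$:
\[
K_{\alpha\beta}=\begin{cases}\dfrac{e^{(\alpha+\beta)L}-1}{\alpha+\beta}\bigl(2\cosh((\alpha+\beta)L)-2e^L\bigr),&\alpha+\beta\neq 0,\\[4pt]2L(1-e^L),&\alpha+\beta=0.\end{cases}
\]
The pure diagonal entries $K_{\alpha\alpha}$ (with $|2\alpha|\geq 2$) are positive as soon as $\cosh(2L)>e^L$, i.e.\ for $L$ bounded below by a universal constant $L_\ast$. The difficulty concentrates in the \emph{mirror-pair} off-diagonals $K_{\alpha,-\alpha}=2L(1-e^L)<0$ and in the cross-block entries between different mirror pairs. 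This is where the \emph{weak orthogonality} of the basis $\{e^{\alpha t}\}_{\alpha\in S_k}$ becomes decisive: after rescaling $c_\alpha\mapsto c_\alpha e^{2|\alpha|L}$, the leading behaviour of $K$ restricted to the positive- (resp.\ negative-) exponent block is the classical symmetric Cauchy matrix $(1/(\alpha+\beta))$, which is positive-definite; while each $2\times 2$ mirror-pair block has determinant of order $\sinh^2(\alpha L)\alpha^{-2}\bigl(2\cosh(2\alpha L)-2e^L\bigr)^2-4L^2(e^L-1)^2$, positive for $|\alpha|\geq 1$ and $L$ large. A Schur-complement argument combining these two pieces of information, plus the fact that the remaining cross-coupling entries between distinct mirror pairs are subdominant relative to the geometric mean of the corresponding diagonals, yields positive-definiteness of $Q_L$.

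The main obstacle is to execute all of the above \emph{uniformly in the mode} $k$. For each fixed $k$ the space $V_k$ is finite-dimensional and compactness alone produces some $L_k$, but \autoref{lem:3circle} requires a single $L=L(m)$ valid for every $k\geq 1$. This is presumably the reason the brute-force treatment of \cite{LY2016} for $m=2$ does not extend; the uniformity is recovered here because every entry of $K$ has explicit exponential scaling in $L$ and in the exponents $\alpha,\beta\in S_k$, so the ratios that drive the Schur-complement positivity depend on $k$ only through $\min_{\alpha\in S_k}|\alpha|=k\geq 1$, a quantity that can only increase with $k$.
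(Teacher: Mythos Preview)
Your reduction to a mode-by-mode quadratic-form problem is correct and essentially coincides with the paper's first step; the paper also lands on the finite-dimensional space $V_k=\mathrm{span}\{e^{\pm(k+2j)t}:j=0,\dots,m-1\}$ and an inequality equivalent to the positive-definiteness of your $Q_L$. Your explicit formula for $K_{\alpha\beta}$ is also correct.

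The gap is in your final paragraph, where you assert uniformity in $k$. Your stated reason---that the relevant ratios depend on $k$ only through $\min_{\alpha\in S_k}|\alpha|=k$, ``a quantity that can only increase with $k$''---points in the wrong direction. After your diagonal rescaling, the positive-exponent block of $K$ is asymptotic to the Cauchy-type matrix $\bar M$ with entries $\bar M_{jl}=2\sqrt{n_j n_l}/(n_j+n_l)$, $n_j=k+2(j-1)$. As $k\to\infty$ the ratios $n_j/n_l\to 1$, so every entry of $\bar M$ tends to $1$ and its smallest eigenvalue tends to $0$: the weak orthogonality gets \emph{worse}, not better, as the mode increases. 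This is exactly the point the paper flags (``the `orthogonal' relation gets weaker and weaker as $n$ gets larger''), and it is why a Schur-complement/diagonal-dominance argument based only on ``$k\geq 1$'' cannot close.

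What actually saves the day---and what your sketch is missing---is a quantitative lower bound on $\lambda_{\min}(\bar M)$ as a function of $k$. The paper obtains this (their Lemma labelled \autoref{lem:claim}) by observing that $\det\bar M$ is a \emph{positive rational function of $k$} with denominator degree at most $m^2$, hence $\det\bar M\geq c\,k^{-m^2}$, so $\lambda_{\min}(\bar M)\geq \det\bar M/m^{m-1}$ decays only polynomially in $k$. This polynomial loss is then beaten by the exponential-in-$kL$ gain coming from the diagonal entries of $K$ (equivalently, from the factors $e^{n_jL}$ in the paper's estimate of $\mathrm{I}_1+\mathrm{II}_1$). Concretely, the paper shows $\lambda_{\min}\geq \tilde C e^{-kL/2}$ after absorbing the polynomial into the exponential, and then verifies that the remaining inequality reduces to $32m(k+2m)L<\tilde C e^{-(k/2+1)L}(e^{2kL}-1)$, which holds for all $k\geq 1$ once $L$ is large enough depending only on $m$. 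If you want to push your Schur-complement route through, you must insert exactly this polynomial-vs-exponential comparison at the point where you invoke positive-definiteness of the Cauchy block; without it the argument does not yield a single $L=L(m)$.
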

The proof will be given in \autoref{sec:3circle}.

\begin{defn}
  A smooth function $u:B_{r_2}\setminus B_{r_1}$ is called an $\eta$-approximate $m$-polyharmonic function if it satisfies
  \begin{eqnarray}
    \label{eqn:approx}
    \Delta^m u &=& a_1\nabla^{2m-1} u + \cdots + a_{2m}u \nonumber \\
               && + \frac{1}{\abs{\partial B_\rho}} \int_{\partial B_\rho} b_1\nabla^{2m-1} u + \cdots + b_{2m}u
  \end{eqnarray}
  where $a_i$ and $b_i$ are smooth functions satisfying
  \begin{equation*}
    \abs{a_j}+ \abs{b_j} \leq \frac{\eta}{\abs{x}^j} \quad \text{on}\quad B_{r_2}\setminus B_{r_1}
  \end{equation*}
  for $j=1,\cdots,2m$.
\end{defn}

The three circle lemma generalizes to the case of $\eta$-approximate $m$-polyharmonic function for small $\eta$.
\begin{thm}[\cite{LY2016}*{Theorem~3.4}]\label{thm:approx}
  There is some constant $\eta_0>0$ such that the following is true. For $L$ given in \autoref{lem:3circle} and $\eta_0$-approximate $m$-polyharmonic function $u$ defined on $A_{i-1}\cup A_i\cup A_{i+1}$ (for any $i\in \mathbb N$) satisfying
  \begin{equation*}
    \int_{\partial B_\rho} u =0 \quad\forall \rho\in [e^{-(i-2)L}, e^{-(i+1)L}],
  \end{equation*}
  we have
  \begin{enumerate}[label=(\alph{*}), ref=\alph{*}]
    \item if $F_{i+1}(u)\leq e^{-L} F_i(u)$, then $F_i(u)\leq e^{-L} F_{i-1}(u)$;
    \item if $F_{i-1}(u)\leq e^{-L} F_i(u)$, then $F_i(u)\leq e^{-L} F_{i+1}(u)$;
    \item either $F_i(u)\leq e^{-L} F_{i-1}(u)$, or $F_i(u)\leq e^{-L} F_{i+1}(u)$.
  \end{enumerate}
\end{thm}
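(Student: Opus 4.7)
The plan is to prove this by a contradiction/compactness argument, following the structure of the biharmonic case in \cite{LY2016}. Suppose the conclusion fails. Then, negating the disjunction of (a), (b), (c), one obtains a sequence of $\eta_k$-approximate $m$-polyharmonic functions $u_k$ on $A_{i_k-1}\cup A_{i_k}\cup A_{i_k+1}$ with $\eta_k\to 0$, vanishing spherical averages on $[e^{-(i_k-2)L}, e^{-(i_k+1)L}]$, and for each of which the implication in (a), (b), or (c) fails. First I would rescale $x\mapsto e^{i_kL}x$, which amounts to translation in the logarithmic radius, to reduce to the fixed triple $A_0\cup A_1\cup A_2$; this is legitimate because the coefficient bound $|a_j|+|b_j|\leq \eta/|x|^j$ has exactly the right homogeneity to make the class of $\eta$-approximate $m$-polyharmonic maps scaling invariant. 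Finally I normalize so that $F_0(u_k)+F_1(u_k)+F_2(u_k)=1$.

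The heart of the argument is to extract a subsequential limit $u_\infty$ of $u_k$ in a topology strong enough to preserve both the equation and the quantities $F_j$. I would apply interior elliptic regularity for $\Delta^m u_k =\text{RHS}$ on nested shrunken annuli: because the coefficients $a_j,b_j$ carry the scale-invariant factor $\eta_k/|x|^j$ with $\eta_k\ll 1$, a Caccioppoli-type estimate combined with interpolation between $L^2$ and $W^{2m,2}$ allows one to absorb the problematic $\nabla^{2m-1}u_k$ term on the right-hand side and to bootstrap a uniform $W^{2m,2}$ bound out of the $L^2$ bound implied by $F_j(u_k)\leq 1$. Iterating delivers uniform $C^l$ bounds on slightly shrunken annuli for every $l$, and Arzel\`a--Ascoli yields a $C^{2m-1}_{\loc}$ limit $u_\infty$. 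This limit satisfies $\Delta^m u_\infty=0$ since every right-hand side coefficient tends uniformly to zero; strong $L^2$ convergence forces $F_j(u_\infty)=\lim F_j(u_k)$, so $u_\infty\not\equiv 0$; and the zero spherical average passes to the limit, then extends to all intermediate radii by real-analyticity of $\rho\mapsto \int_{\partial B_\rho}u_\infty$ for a polyharmonic function.

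Now I apply \autoref{lem:3circle} to $u_\infty$ (it is really a local statement, since its proof only compares $F$-quantities on three adjacent annuli) to obtain $2F_1(u_\infty)<e^{-L}(F_0(u_\infty)+F_2(u_\infty))$. Each of the three negations is then refuted by a short elementary inequality. For (c), the negation gives in the limit $F_1\geq e^{-L}F_0$ and $F_1\geq e^{-L}F_2$, hence $F_0,F_2\leq e^LF_1$, and combining with the three-circle inequality produces $2F_1<2F_1$, which is impossible since $F_1>0$. The failures of (a) and (b) reduce by analogous bookkeeping to $2F_1<(1+e^{-2L})F_1$, again contradicting $F_1>0$ because $L>0$.

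The hard part will be the uniform regularity in the compactness step. A direct elliptic estimate controls $\|u_k\|_{W^{2m,2}}$ by $\|u_k\|_{L^2}$ plus $\|\text{RHS}\|_{L^2}$, but the RHS involves $\nabla^{2m-1}u_k$, a quantity not a priori controlled by the normalization $F_j(u_k)\leq 1$. The crux is to exploit the smallness of $\eta_k$ together with the scale-invariant coefficient bound $|a_j|\leq \eta/|x|^j$, so that on nested annular subdomains one can absorb the $\nabla^{2m-1}u_k$ contribution via interpolation inequalities, with constants independent of $k$. Once this uniform absorption is achieved, the remainder of the argument is a routine passage to the limit.
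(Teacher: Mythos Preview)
Your approach is correct and is exactly the compactness--contradiction argument the paper has in mind when it defers to \cite{LY2016}*{Theorem~3.4}: rescale to a fixed triple of annuli, normalize the total $F$-mass, use the scale-invariant coefficient bound together with interior elliptic estimates and interpolation to absorb the highest-order RHS term and extract a nonzero $m$-polyharmonic limit with vanishing spherical averages, then derive a contradiction from \autoref{lem:3circle}. One small imprecision: your claim that ``strong $L^2$ convergence forces $F_j(u_\infty)=\lim F_j(u_k)$'' is only justified for the middle annulus $A_1$, which is compactly contained in the open domain; for $A_0$ and $A_2$ you only have weak lower semicontinuity. This does not damage the argument: in each of the three negations the inequalities force $F_1(u_k)$ to be bounded below (e.g.\ for (c), $1=\sum F_j<(2e^L+1)F_1$), so $F_1(u_\infty)>0$, and the upper bounds $F_0(u_\infty)\leq e^LF_1(u_\infty)$, $F_2(u_\infty)\leq e^{\pm L}F_1(u_\infty)$ needed for the contradictions follow already from $F_j(u_\infty)\leq\liminf F_j(u_k)$.
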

The proof of this theorem is almost the same as Theorem 3.4 in \cite{LY2016} and is therefore omitted.

To make use of \autoref{thm:approx}, we define
\begin{equation}\label{eqn:wi}
  w_i \eqdef u_i- u^*_i
\end{equation}
where $u^*_i(\rho) \eqdef \dashint_{\partial B_\rho} u_i$ is the average of $u_i$ on $\partial B_\rho$.
\begin{lem}[\cite{LY2016}*{Lemma~4.1}]\label{lem:41}
  There is $\varepsilon_1>0$ such that if $\varepsilon$ in \eqref{eqn:DT} is smaller than $\varepsilon_1$, then $w_i$ defined in \eqref{eqn:wi} is $\eta_0$-approximate $m$-polyharmonic function for the $\eta_0$ given in \autoref{thm:approx}.
\end{lem}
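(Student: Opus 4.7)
The plan is to exploit two structural ingredients: (i) the commutation of $\Delta^m$ with spherical averaging, and (ii) the polynomial structure of the Euler--Lagrange system for extrinsic polyharmonic maps, which (see \cite{GastelScheven2009Regularity}) has the schematic form
\begin{equation*}
  \Delta^m u_i = F(u_i,\nabla u_i,\ldots,\nabla^{2m-1}u_i) = \sum_{\vec a} B_{\vec a}(u_i)\, \nabla^{a_1}u_i \ast \cdots \ast \nabla^{a_k}u_i,
\end{equation*}
a finite sum over tuples with $k \geq 2$, every $a_j \geq 1$, and $a_1+\cdots+a_k \leq 2m$, where the coefficients $B_{\vec a}$ are smooth functions of $u_i$ (extended to a tubular neighbourhood of $N$ by, e.g., the nearest-point projection).

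First I would verify the identity $\Delta v^{*}=(\Delta v)^{*}$ on $\RB^{2m}\setminus\{0\}$ for any smooth vector-valued $v$: this follows from $\partial_\rho v^{*}=(\partial_\rho v)^{*}$ (differentiation under the integral defining the spherical average) together with the fact that $\Delta_\theta v$ integrates to zero on each sphere. Iterating then gives $\Delta^m u_i^{*}=(\Delta^m u_i)^{*}$, so that
\begin{equation*}
  \Delta^m w_i = F(u_i)-F(u_i)^{*}.
\end{equation*}
Next, substituting $u_i=u_i^{*}+w_i$ in every factor $\nabla^{a_j}u_i$ and Taylor-expanding $B_{\vec a}(u_i^{*}+w_i)$ in $w_i$ yields the decomposition
\begin{equation*}
  F(u_i) = F(u_i^{*},\nabla u_i^{*},\ldots) + \sum_{a=0}^{2m-1} c_a\cdot\nabla^{a}w_i,
\end{equation*}
where the \emph{baseline} $F(u_i^{*},\nabla u_i^{*},\ldots)$ is free of any $w_i$ factor, and each $c_a$ collects the contributions obtained by peeling off a single $\nabla^{a}w_i$ factor from each non-baseline monomial (resp.\ a single $w_i$ coming from the Taylor remainder of $B_{\vec a}$ when $a=0$).

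The crucial observation is that the baseline is a rotationally invariant nonlinear expression applied to the radial map $u_i^{*}$, and is therefore itself a radial $\RB^{K}$-valued function on $\RB^{2m}\setminus\{0\}$; its spherical average equals itself, so it cancels in $F(u_i)-F(u_i)^{*}$. What remains,
\begin{equation*}
  \Delta^m w_i = \sum_{a=0}^{2m-1} c_a\,\nabla^{a}w_i - \dashint_{\partial B_\rho}\sum_{a=0}^{2m-1} c_a\,\nabla^{a}w_i,
\end{equation*}
already matches the definition of an $\eta$-approximate $m$-polyharmonic function with $a_{2m-a}=c_a$ and $b_{2m-a}=-c_a|_{\partial B_\rho}$. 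For the required coefficient bound I would argue that after peeling, each remaining monomial still contains at least one small factor (of the form $\nabla^{b}u_i^{*}$, $\nabla^{b}w_i$, or $w_i$), and the total derivative order of these remaining factors is at most $2m-a$. Combining \eqref{eqn:good} with the bound $|\nabla^l u_i^{*}|\leq C\varepsilon^{1/(2m)}/\rho^l$ (inherited through averaging) and the elementary estimate $|w_i|\leq\osc_{\partial B_\rho}u_i\leq C\varepsilon^{1/(2m)}$ then gives $|c_a|\leq C\varepsilon^{1/(2m)}/\rho^{2m-a}$ on the annulus, where we use $\rho\leq 1$ to absorb a nonnegative power $\rho^{2m-\sum_j a_j}\leq 1$. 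Choosing $\varepsilon_1$ so that $C\varepsilon_1^{1/(2m)}\leq\eta_0$ then delivers $|a_{2m-a}|+|b_{2m-a}|\leq\eta_0/|x|^{2m-a}$, as required.

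The principal technical obstacle I anticipate is verifying the rotational invariance of the baseline --- i.e., confirming that every contraction appearing in $F$ is formed from the Euclidean metric on $\RB^{2m}$, so that $F(u_i^{*})$ inherits the radial symmetry of $u_i^{*}$. This reduces to inspecting the explicit form of the EL system of \cite{GastelScheven2009Regularity}; since $\Delta^m$ is manifestly rotationally invariant and the normal-bundle projection depends only on $u$ (and not on $x$), the index bookkeeping should pass through without surprises. Everything else is routine algebraic expansion combined with the $\varepsilon$-regularity estimate.
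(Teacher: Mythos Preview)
Your proposal is correct and is precisely the approach the paper has in mind: it omits the proof and refers to \cite{LY2016}*{Lemma~4.1}, whose argument you have faithfully generalized to the $m$-polyharmonic setting. The only refinement worth noting is that the Euler--Lagrange equation in this paper (see \eqref{eq:EL} in the appendix) has $\sum_j a_j = 2m$ exactly rather than $\leq 2m$, which slightly simplifies your coefficient bound since no extra powers of $\rho$ need to be absorbed; also, the decomposition $F(u_i)-F(u_i^*)=\int_0^1\frac{d}{ds}F(u_i^*+sw_i)\,ds$ gives a cleaner way to exhibit the coefficients $c_a$ than term-by-term peeling, though both are equivalent.
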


As a corollary, we have
\begin{cor}[\cite{LY2016}*{Lemma~4.2}]\label{cor:42} 		
  For $\varepsilon<\varepsilon_1$ in \eqref{eqn:DT} and $i$ sufficiently large, assume without loss of generality that
  \begin{equation*}
    \Sigma\eqdef B_\delta\setminus B_{\lambda_i R} = \bigcup_{l=l_0}^{l_i} A_l.
  \end{equation*}

  Then we have
  \begin{equation*}
    F_l(w_i)\leq C\varepsilon^{1/m} \left( e^{-L(l-l_0)} + e^{-L(l_i-l)} \right)
  \end{equation*}
  for some constant $C$ independent of $i$.
\end{cor}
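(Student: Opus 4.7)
The plan is to apply \autoref{thm:approx} iteratively along the neck, bootstrapping from pointwise $\varepsilon$-regularity bounds at the two endpoints of $\Sigma$. By \autoref{lem:41}, $w_i$ is $\eta_0$-approximate $m$-polyharmonic on the whole neck once $\varepsilon<\varepsilon_1$, and by the very construction of $u_i^\ast$ the spherical averages $\int_{\partial B_\rho}w_i$ vanish identically in $\rho$. Hence for every interior index $l\in\{l_0+1,\ldots,l_i-1\}$ the three conclusions (a)--(c) of \autoref{thm:approx} are available centered at $A_l$.

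First I would establish the uniform initial bound $F_l(w_i)\leq C\varepsilon^{1/m}$ for all $l\in\{l_0,\ldots,l_i\}$. The case $l=1$ of \eqref{eqn:good} gives $\rho|\nabla u_i|\leq C\varepsilon^{1/(2m)}$ uniformly on each sphere in the neck, so integrating along a great circle of $\partial B_\rho$ yields the pointwise oscillation estimate $|w_i(x)|\leq C\varepsilon^{1/(2m)}$ on $\Sigma$. Substituting into the definition of $F_l(w_i)$ and computing the trivial radial integral $\int_{A_l}|x|^{-2m}\,dx=\omega_{2m-1}L$ gives the claimed uniform bound, which in particular supplies starting values at the two endpoints $l_0$ and $l_i$.

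Next I would run the trichotomy. Write $T_l$ for ``$F_l(w_i)\leq e^{-L}F_{l-1}(w_i)$'' and $S_l$ for ``$F_l(w_i)\leq e^{-L}F_{l+1}(w_i)$''. Part (a) of \autoref{thm:approx} is precisely $T_{l+1}\Rightarrow T_l$, so $\set{l\mpt T_l}$ is downward closed; part (b) is $S_{l-1}\Rightarrow S_l$, so $\set{l\mpt S_l}$ is upward closed; and (c) says the two together cover $\{l_0+1,\ldots,l_i-1\}$. Let $l^\ast$ denote the largest index in this range with $T_{l^\ast}$ (with the convention $l^\ast=l_0$ if none exists). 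Iterating $T_l$ downward from $l^\ast$ to $l_0$ gives
\begin{equation*}
F_l(w_i)\leq e^{-L(l-l_0)}F_{l_0}(w_i)\leq C\varepsilon^{1/m}e^{-L(l-l_0)}\qquad\text{for }l\leq l^\ast,
\end{equation*}
and iterating $S_l$ upward from $l^\ast+1$ to $l_i$ gives the analogous bound with $e^{-L(l_i-l)}F_{l_i}(w_i)$ on the right-hand side for $l>l^\ast$. Summing the two partial estimates yields the stated inequality.

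The real obstacle lies strictly earlier in the paper, not in this corollary: once \autoref{thm:approx} is granted, the iteration above is essentially combinatorial and mirrors the argument used for biharmonic maps in \cite{LY2016}. The only mild care required is that the approximate three-circle conclusions are available only for interior indices $l\in\{l_0+1,\ldots,l_i-1\}$, because invoking them at a boundary index would require control on an adjacent annulus lying outside the range where \eqref{eqn:good} applies; the scheme above is designed precisely so that the iteration starts from those endpoint bounds and propagates inward.
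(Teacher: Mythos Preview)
Your proposal is correct and follows essentially the same approach as the paper, which omits the proof as a straightforward generalization of \cite{LY2016}*{Lemma~4.2}. Your scheme—uniform bound $F_l(w_i)\leq C\varepsilon^{1/m}$ from \eqref{eqn:good}, then iterating the trichotomy (a)--(c) of \autoref{thm:approx} inward from the two endpoints via a splitting index $l^\ast$—is precisely that argument.
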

The proofs of \autoref{lem:41} and \autoref{cor:42} are straightforward generalization of the corresponding proofs in \cite{LY2016} and are omitted.

The next lemma concludes the discussion of this subsection by giving a proof of \eqref{eqn:tangent}, which is stronger than Lemma 4.3 of \cite{LY2016} in the sense that we bound any higher order derivatives. 
\begin{lem} The inequality \eqref{eqn:tangent} holds for $\sigma=1/2$.
  \label{lem:43}
\end{lem}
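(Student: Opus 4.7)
The plan is to use \autoref{cor:42}, which provides $L^{2}$-decay of $w_{i}$ along the neck, together with elliptic regularity for the approximate polyharmonic equation, in order to promote this $L^{2}$-decay to a pointwise $C^{l}$-decay of $w_{i}$. The exponent $\sigma=1/2$ will emerge naturally from the square root taken in passing from the quadratic integrand of $F_{l}(w_{i})$ to the linear $L^{2}$-norm of $w_{i}$.

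First, since $u_{i}^{*}=u_{i}^{*}(\abs{x})$ is radial and each $X_{k}$ is tangent to $\pt B_{\rho}$, we have $X_{k} u_{i}^{*}=0$, hence $X_{k} u_{i}=X_{k} w_{i}$. Moreover, each $X_{k}$ is of the form $A_{k}x$ for an antisymmetric matrix $A_{k}$, so $X_{k}=\abs{x}\hat X_{k}$ with $\hat X_{k}$ $0$-homogeneous and uniformly smooth on spheres. Thus, using the Leibniz rule, it suffices to establish that for every $l\in\mathbb{N}$ and every $\rho\in(2\lambda_{i}R,\delta/2)$,
\begin{equation*}
\rho^{l}\max_{\pt B_{\rho}}\abs{\driv[w_{i}]{l}}\leq C(l)\,\eps^{1/(2m)}\sbr{(\delta/\rho)^{-1/2}+(\rho/\lambda_{i}R)^{-1/2}}.
\end{equation*}
I would then fix such $\rho\in A_{l^{*}}$ for some $l^{*}\in(l_{0},l_{i})$ and rescale, setting $v(y)\eqdef w_{i}(\rho y)$ on $y\in B_{2}\setminus B_{1/2}$. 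By \autoref{lem:41} and scale-invariance of the $\eta$-approximate polyharmonic structure, $v$ satisfies an equation of the same type on $B_{2}\setminus B_{1/2}$ with $\eta\leq C\eta_{0}$, whose coefficients are uniformly smooth thanks to \eqref{eqn:good}.

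The technical heart of the argument is the interior $L^{2}$-to-$C^{l+1}$ estimate
\begin{equation*}
\|v\|_{C^{l+1}(B_{3/2}\setminus B_{2/3})}\leq C(l)\,\|v\|_{L^{2}(B_{2}\setminus B_{1/2})}.
\end{equation*}
Writing the equation as $(\Delta^{m}-T)v=0$, where $T$ is a (possibly nonlocal, due to the spherical averaging) linear operator of order at most $2m-1$ whose operator norm $W^{2m-1,2}\to L^{2}$ is bounded by $C\eta_{0}$, the standard $L^{2}$-interior estimate for $\Delta^{m}$ combined with interpolation along a chain of nested annuli allows one to absorb $\|Tv\|_{L^{2}}\leq C\eta_{0}\|v\|_{W^{2m-1,2}}$ into the $W^{2m,2}$-estimate, yielding $\|v\|_{W^{2m,2}}\leq C\|v\|_{L^{2}}$; bootstrapping (using the smoothness of the coefficients) then produces the $C^{l+1}$-bound for every $l$. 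Combining this with \autoref{cor:42}, a change of variable gives
\begin{equation*}
\|v\|_{L^{2}(B_{2}\setminus B_{1/2})}^{2}\leq C\sum_{|l-l^{*}|\leq 1}F_{l}(w_{i})\leq C\eps^{1/m}\sbr{e^{-L(l^{*}-l_{0})}+e^{-L(l_{i}-l^{*})}},
\end{equation*}
and the scaling $\driv[w_{i}]{l}(\rho y)=\rho^{-l}\driv[v]{l}(y)$ together with $l^{*}L\approx-\log\rho$, $l_{0}L\approx-\log\delta$, $l_{i}L\approx-\log(\lambda_{i}R)$ yield the desired estimate with $\sigma=1/2$.

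The main obstacle is the linear (rather than H\"older) dependence $\|v\|_{C^{l+1}}\leq C\|v\|_{L^{2}}$ in the above interior estimate. A naive interpolation between the $L^{2}$-smallness of $v$ and the uniform $C^{\infty}$-bound from \eqref{eqn:good} only yields a H\"older power of $\|v\|_{L^{2}}$ and would force $\sigma<1/2$. To recover $\sigma=1/2$ it is crucial that $\eta_{0}$ in \autoref{lem:41} can be chosen arbitrarily small, so that the perturbation $T$ in $(\Delta^{m}-T)v=0$ is genuinely absorbed into the $\Delta^{m}$-estimate, leaving no residual factor involving the uniform $C^{\infty}$-bound; additional care is also required to handle the nonlocal spherical-average term, which must be bounded in $L^{2}$ by $C\eta_{0}\|v\|_{W^{2m-1,2}}$ via Cauchy--Schwarz in the angular variable.
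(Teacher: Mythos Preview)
Your approach is correct and follows the same overall strategy as the paper: invoke \autoref{cor:42} for the $L^2$-decay of $w_i$ along the neck, rescale to a fixed annulus, and upgrade to pointwise $C^{l}$-decay via interior elliptic estimates, with $\sigma=1/2$ arising from the square root. The only notable difference lies in the bootstrap mechanism. The paper first applies an $L^p$ estimate to the approximate polyharmonic equation (citing \cite{LY2016}*{Lemma~3.3}) to obtain the $L^\infty$-bound \eqref{eqn:Xkui} for $X_k u_i$, and then appeals to \autoref{lem:high}, which exploits the fact that $X_k u_i$ satisfies the \emph{linearized polyharmonic map equation} (since $X_k$ generates a one-parameter group of symmetries of the equation) to pass to all higher-order derivatives. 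You instead bootstrap directly on the approximate equation for $w_i$, relying on the uniform smoothness of its coefficients furnished by \eqref{eqn:good}. Both routes yield the linear dependence $\|v\|_{C^{l+1}}\leq C\|v\|_{L^2}$ that you correctly flag as essential for obtaining $\sigma=1/2$ rather than a smaller exponent; the paper's use of \autoref{lem:high} is somewhat cleaner because the linearized equation has explicit structure independent of the precise form of the coefficients $a_j,b_j$, whereas your route is more self-contained but requires the additional (valid) observation that the higher derivatives of those coefficients are uniformly controlled.
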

\begin{proof}
  For each $\rho=e^t \in [2\lambda_i R, \frac{\delta}{2}]$, \autoref{cor:42} implies that
  \begin{equation*}
    \int_{B_{2\rho}\setminus B_{\rho/2}} \frac{w_i^2}{\abs{x}^{2m}} dx \leq C \varepsilon^{1/m} \left( e^{-(t-\log (\lambda_i R))} + e^{-(\log \delta -t)} \right).
  \end{equation*}
  Recall that by \autoref{lem:41}, $w_i$ is a solution of \eqref{eqn:approx}. A version of $L^p$ estimate applied to \eqref{eqn:approx} (see \cite{LY2016}*{Lemma~3.3}) gives
  \begin{equation}\label{eqn:Xkui}
    \max_{k=1,\ldots,m(2m-1)} \max_{B_{3\rho/2}\setminus B_{2\rho/3}} \abs{X_k u_i} \leq C \varepsilon^{1/(2m)} \left( e^{-\frac{1}{2}(t-\log (\lambda_i R))} + e^{-\frac{1}{2}(\log \delta -t)} \right).
  \end{equation}
  Here $X_k$ are the Killing fields in \autoref{sub:decompose}. All higher order estimates of $X_k u_i$ follows from  \eqref{eqn:Xkui} and \autoref{lem:high}.
\end{proof}
\subsection{The decay of radial derivatives}\label{sub:pohozaev}
In the case of harmonic maps (in dimension two), the well known Pohozaev identity ensures that
\begin{equation*}
  \int_{S^1} \rho^{-2} \abs{\pfrac{u_i}{\theta}}^2 d\theta = \int_{S^1} \abs{\pfrac{u_i}{\rho}}^2 d\theta.
\end{equation*}
The proof of this equality is simply the integration by parts and the elementary observation that
\begin{equation*}
  \int_{S^1} \Delta u_i \cdot \partial_\rho u_i d\theta=0.
\end{equation*}

In \cite{LY2016}, the authors discovered that by a similar argument, one obtains not the direct comparison between the tangential and radial energy, but an ordinary differential inequality which describes the dynamics of the radial energy along the neck, and in which the tangential energy appears as a small perturbation term. In other words, the radial energy itself has an internal tendency of exponential decay, rather than simply following the decay of tangential energy. This is a key difference between the neck analysis of harmonic maps and polyharmonic maps.

In the rest of this subsection, we present this argument in the setting of (extrinsic) polyharmonic maps and show that a similar first order non-homogeneous linear difference inequality (c.f.~\eqref{eq:ode}) still gives the decay of radial energy. The derivation of this ordinary differential inequality exploits some special structure of the polyharmonic operator $\Delta^m$, the details of which is proved in \autoref{sec:pohozaev}. 

We prefer working in the cylinder coordinates $(t,\theta)$, where $\rho=e^t$ and $\theta\in S^{2m-1}$. Since
\begin{equation*}
  \Delta u(t,\theta) = e^{-2t} \left( \partial_t^2 u + 2(m-1)\partial_t u + \Delta_{S^{2m-1}} u \right)
\end{equation*}
and
\begin{equation*}
  [e^{-\alpha t}, \Delta_{S^{2m-1}}]=0, \qquad[e^{-\alpha t}, \partial_t] = \alpha e^{-\alpha t}
\end{equation*}
for any $\alpha\in \RB$,
there is a polynomial $P$ of constant coefficients such that 
\begin{equation}\label{eqn:P}
  \Delta^m u = e^{-2mt} P(\partial_t, \Delta_{S^{2m-1}}) u.
\end{equation}

Our Pohozaev type argument relies crucially on the following elementary observations on $P$, whose proof can be found in \autoref{sec:pohozaev}.
\begin{lem}
  \label{lem:P} Suppose that the polynomial $P$ defined in \eqref{eqn:P} is given by
  \begin{equation}\label{eq:polynomial_P}
    P(\partial_t,\Delta_{S^{2m-1}})= \sum_{\stackrel{1\leq p+2q\leq 2m}{p,q\in \mathbb N\bigcup\set{0}}} a_{p,q} \partial_t^p \Delta_{S^{2m-1}}^q.
  \end{equation}
  Then the constant coefficients $\set{a_{p,q}}$ satisfy 
  \begin{equation*}
    a_{p,q}=0 \quad \text{for } p=1,3,\cdots,2m-1,
  \end{equation*}
  and
  \begin{equation}\label{eq:coeffi}
    (-1)^{m-\frac{p}{2}} a_{p,0}>0\quad \text{for } p=2,4,\cdots,2m.
  \end{equation}
\end{lem}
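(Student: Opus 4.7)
The plan is to derive an explicit factorisation of $P$ as a product of $m$ shifted second-order operators, from which both assertions can be read off with minimal computation.

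First, starting from $\Delta = e^{-2t}L$ with $L = \partial_t^2 + 2(m-1)\partial_t + \Delta_{S^{2m-1}}$, I would iterate $\Delta^m = (e^{-2t}L)^m$ and use the commutation rule $[e^{-\alpha t},\partial_t]=\alpha e^{-\alpha t}$ (which gives $L\,e^{-\alpha t} = e^{-\alpha t} L^{(\alpha)}$ with $L^{(\alpha)} \eqdef (\partial_t-\alpha)^2 + 2(m-1)(\partial_t-\alpha) + \Delta_{S^{2m-1}}$) to pull every $e^{-2t}$ to the extreme left; each pass through an $L^{(\beta)}$ turns it into $L^{(\beta+2)}$. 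Collecting the $m$ exponentials into $e^{-2mt}$ gives
\begin{equation*}
  P(\partial_t,\Delta_{S^{2m-1}}) = L^{(2(m-1))} L^{(2(m-2))} \cdots L^{(2)} L^{(0)} = \prod_{k=0}^{m-1} L^{(2k)},
\end{equation*}
and since the factors are polynomials in the two commuting operators $\partial_t$ and $\Delta_{S^{2m-1}}$, the order of multiplication is irrelevant.

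Next, for the vanishing of $a_{p,q}$ when $p$ is odd, I would show that $P$ is even in $\partial_t$. Completing the square inside each factor around $m-1$ verifies
\begin{equation*}
  L^{(\alpha)}(-\partial_t,\Delta_{S^{2m-1}}) = L^{(2(m-1)-\alpha)}(\partial_t,\Delta_{S^{2m-1}}).
\end{equation*}
Since $\alpha\mapsto 2(m-1)-\alpha$ is an involution on $\set{0,2,\ldots,2(m-1)}$, the substitution $\partial_t\mapsto-\partial_t$ only permutes the $m$ factors of the product, hence leaves $P$ unchanged; every coefficient of an odd power of $\partial_t$ must therefore vanish.

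Finally, for the sign condition \eqref{eq:coeffi}, setting $\Delta_{S^{2m-1}}=0$ produces the factorisation
\begin{equation*}
  L^{(2k)}\big|_{\Delta_{S^{2m-1}}=0} = (\partial_t-2k)^2 + 2(m-1)(\partial_t-2k) = (\partial_t-2k)\bigl(\partial_t+2(m-1-k)\bigr).
\end{equation*}
Multiplying over $k=0,\ldots,m-1$, the linear factors pair as $\partial_t\cdot\partial_t$ (from $k=0$ and $k=m-1$) together with $(\partial_t-2j)(\partial_t+2j)=\partial_t^2-4j^2$ for $j=1,\ldots,m-1$, which yields
\begin{equation*}
  P(\partial_t,0) = \partial_t^2 \prod_{j=1}^{m-1}\bigl(\partial_t^2-4j^2\bigr).
\end{equation*}
Viewed as a polynomial in $s=\partial_t^2$ of degree $m$ with roots in $\set{0,4,16,\ldots,4(m-1)^2}$, Vieta's formulas show that its coefficient of $s^l$ equals $(-1)^{m-l}e_{m-l}(4,16,\ldots,4(m-1)^2)$ for $1\leq l\leq m$, where $e_j$ denotes the $j$-th elementary symmetric polynomial. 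Since every argument of $e_{m-l}$ is strictly positive, this coefficient has sign $(-1)^{m-l}$; setting $p=2l$ gives exactly $(-1)^{m-p/2}a_{p,0}>0$ for $p=2,4,\ldots,2m$.

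I do not anticipate any serious obstacle: once the product expression for $P$ is in place, both assertions reduce to elementary manipulations, and the only subtlety is keeping signs straight when commuting $e^{-2t}$ through $\partial_t$.
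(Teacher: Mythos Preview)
Your approach is correct and essentially the same as the paper's. Both derive the product factorisation $P=\prod_{k=0}^{m-1}L^{(2k)}$ (the paper writes each factor as $\square_k+b_k\partial_t$ with $\square_k=\partial_t^2+\Delta_{S^{2m-1}}-4k(m-1-k)$ and $b_k=2(m-1-2k)$, obtained by the same commutation), exploit the involution $k\mapsto m-1-k$ on the factors to establish evenness in $\partial_t$, and then factor $P(\partial_t,0)=\partial_t^2\prod_{j=1}^{m-1}(\partial_t^2-4j^2)$; your explicit invocation of Vieta for the sign pattern just makes precise what the paper calls ``clearly implies''.
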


As usual, the starting point of the Pohozaev argument is 
\begin{equation}\label{eqn:start}
  \int_{_{S^{2m-1}}} \Delta^m u \cdot \partial_t u d\theta =0,
\end{equation}
where we have used the fact that $\partial_t u$ is a tangent vector of $N$ at $u$ and $\Delta^m u$ is a normal vector of $N$ at the same point by the Euler-Lagrange equation of the extrinsic polyharmonic maps (see \cite{AngelsbergPumberger2009regularity}*{Lemma~2.1}).

By using integration by parts and \eqref{eqn:P}, \eqref{eq:polynomial_P}, \eqref{eqn:start}, we obtain
\begin{lem}
  \label{lem:first} Assume that $u$ is a smooth (extrinsic) $m$-polyharmonic map defined on $B\setminus \set{0}$. Then
  \begin{equation}\label{eq:pohozaev:EL}
    \int_{S^{2m-1}} P(\partial_t, \Delta_{S^{2m-1}}) u \cdot \partial_t u = \partial_ t \int_{S^{2m-1}} (Q+\Theta)=0,
  \end{equation}
  where
  \begin{equation}\label{eq:pohozaev:normal}
    \begin{split}
      Q&=\sum_{n=1}^m(-1)^{n+1}\sbr{n-1/2}a_{2n,0}|\pt_t^{n}u|^2\\
       &\qquad\qquad+\pt_t\mbr{\sum_{n=1}^m\sum_{k=1}^{n-1}\sum_{l=1}^{n-k}(-1)^{k+l}a_{2n,0}\pt_t^{k+l-1}u\pt_t^{2n-k-l}u
      },
    \end{split}
  \end{equation}
  and $\Theta$ is a quadratic homogeneous polynomial of
  \begin{equation*}
    \partial_t^p (\nabla_{S^{2m-1}})^{q_2}(\Delta_{S^{2m-1}})^{q_1}  u
  \end{equation*}
  where $q_2$ is either $0$ or $1$ and $1\leq 2 q_1+q_2\leq 2m-1$, $p=0,1,\ldots,2m-3$.
\end{lem}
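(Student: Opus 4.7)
The equality $\pt_t \int_{S^{2m-1}}(Q+\Theta)=0$ is immediate: substituting $\Delta^m u = e^{-2mt}Pu$ from~\eqref{eqn:P} into~\eqref{eqn:start}, and using that the extrinsic Euler--Lagrange equation forces $\Delta^m u \perp T_u N$ while $\pt_t u \in T_u N$, yields $\int_{S^{2m-1}} Pu\cdot\pt_t u = 0$ identically in $t$. The substantive task is therefore the first equality, realising the integrand as a total $t$-derivative with the prescribed split between $Q$ and $\Theta$.

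\textbf{The pure-\texorpdfstring{$t$}{t} family.} By \autoref{lem:P}, $P$ is a polynomial in $\pt_t^{2}$ and $\Delta_{S^{2m-1}}$, so I would split
\begin{equation*}
  Pu \cdot \pt_t u = \sum_{n=1}^{m} a_{2n,0}\, \pt_t^{2n} u \cdot \pt_t u \;+\; \sum_{\substack{p\text{ even},\, q\geq 1\\ p+2q\leq 2m}} a_{p,q}\, \pt_t^p \Delta_{S^{2m-1}}^q u \cdot \pt_t u
\end{equation*}
and treat the two families separately. For the pure-$t$ family, I would iterate the identity $\pt_t^j u \cdot \pt_t^k u = \pt_t(\pt_t^{j-1}u\cdot\pt_t^k u) - \pt_t^{j-1}u\cdot\pt_t^{k+1}u$ starting from $\pt_t^{2n}u\cdot\pt_t u$; after $n-1$ steps the residual becomes $(-1)^{n-1}\pt_t^{n+1}u\cdot\pt_t^n u = \tfrac{(-1)^{n-1}}{2}\pt_t|\pt_t^n u|^2$, which is itself a total $t$-derivative. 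Applying a second layer of IBP to each collected intermediate $\pt_t(\pt_t^{2n-s}u\cdot\pt_t^s u)$ then reorganises the bookkeeping into the nested $(k,l)$ double sum displayed inside $\pt_t[\cdots]$ in~\eqref{eq:pohozaev:normal}, while the repeated contributions to $|\pt_t^n u|^2$ accumulate to the prescribed coefficient $(-1)^{n+1}(n-1/2)$. This produces the $Q$ summand.

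\textbf{The spherical family.} For the $q\geq 1$ family, I would first redistribute the spherical Laplacians symmetrically via Green's identity on $S^{2m-1}$: for $q=2q_1$ set $q_2=0$ and write $\int\Delta^q_{S^{2m-1}} f\cdot g = \int\Delta^{q_1}_{S^{2m-1}} f\cdot \Delta^{q_1}_{S^{2m-1}} g$; for $q=2q_1+1$ set $q_2=1$ and use the analogue with a single $\nabla_{S^{2m-1}}$ on each side. In either case $1\leq 2q_1+q_2 = q \leq m\leq 2m-1$, as required. Writing $X = (\nabla_{S^{2m-1}})^{q_2}\Delta^{q_1}_{S^{2m-1}}u$, the pairing becomes $\pm\pt_t^{p}X\cdot\pt_t X$, to which I would apply $t$-IBP: at step $s$ this peels off a total derivative $\pt_t(\pt_t^{p-s}X\cdot\pt_t^{s}X)$ (for $s=1,\ldots,p/2$), terminating with the residual $\pt_t^{p/2+1}X\cdot\pt_t^{p/2}X = \tfrac12\pt_t|\pt_t^{p/2}X|^2$. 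Since $p\leq 2m-2$, every factor inside any extracted $\pt_t$ has $t$-order at most $p-1 \leq 2m-3$, matching the range $p\in\{0,\ldots,2m-3\}$ claimed for $\Theta$. Assembling all these pieces gives $\pt_t\int\Theta$.

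\textbf{Main obstacle.} The hard part will be the precise combinatorial matching: verifying that after the two nested levels of IBP the coefficient of $|\pt_t^n u|^2$ in $Q$ is exactly $(-1)^{n+1}(n-1/2)$ and the inner double sum takes the form displayed in~\eqref{eq:pohozaev:normal}. I plan to establish this by induction on $n$, leveraging the alternating sign~\eqref{eq:coeffi} together with the parity statement of~\autoref{lem:P}. The absence of odd powers of $\pt_t$ in $P$ is essential here: odd-order $t$-pairings would, upon IBP, generate extra residual contributions to the boundary $|\pt_t^n u|^2$ terms and obstruct the clean consolidation required for~\eqref{eq:pohozaev:normal}.
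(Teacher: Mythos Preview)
Your proposal is correct and follows essentially the same route as the paper: the paper also splits into the $q=0$ and $q\geq 1$ families, handles the spherical family by moving half of $\Delta_{S^{2m-1}}^q$ across (introducing exactly your $w_q=(\nabla_{S^{2m-1}})^{q_2}\Delta_{S^{2m-1}}^{q_1}u$), and reduces everything to the same iterated $t$-integration-by-parts. The only cosmetic difference is that the paper packages the two layers of $t$-IBP into two standalone identities, namely $\partial_t^{2n}w\cdot\partial_t w=\partial_t\bigl((-1)^{n+1}\tfrac12|\partial_t^n w|^2+\sum_{k=1}^{n-1}(-1)^{k+1}\partial_t^k w\cdot\partial_t^{2n-k}w\bigr)$ and $\partial_t^k w\cdot\partial_t^{2n-k}w=\partial_t\bigl(\sum_{l=1}^{n-k}(-1)^{l-1}\partial_t^{k+l-1}w\cdot\partial_t^{2n-k-l}w\bigr)+(-1)^{n-k}|\partial_t^n w|^2$, which immediately yield the coefficient $(-1)^{n+1}(n-\tfrac12)$ without a separate induction---you may find it cleaner to state and verify these two formulas directly rather than carry the bookkeeping through an inductive argument.
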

The proof of \autoref{lem:first} will be given in \autoref{sec:pohozaev}.

Now, for any fixed $i$, since $u_i$ as a function defined on $B$ is smooth at the origin,  
\[
  \lim_{t\to-\infty}(Q+\Theta)=0,
\]
which implies that
\begin{equation}
  \lim_{t\to-\infty}\int_{S^{2m-1}}(Q+\Theta) d\theta=0.
  \label{eqn:zero}
\end{equation}
By integrating \eqref{eq:pohozaev:EL} over $(-\infty,t)$ and using \eqref{eqn:zero}, we know that for all $t<0$, 
\begin{equation}\label{eq:pohozaev:alternating}
  \begin{split}
    0&=\int_{S^{2m-1}}\Bigg\{\Theta+\sum_{n=1}^m\sbr{n-1/2}(-1)^{n+1}a_{2n,0}|\pt_t^nu_i|^2\\
     &\qquad\qquad\qquad+\pt_t\mbr{\sum_{n=1}^m\sum_{k=1}^{n-1}\sum_{l=1}^{n-k}(-1)^{k+l}a_{2n,0}\pt_t^{k+l-1}u_i\pt_t^{2n-k-l}u_i}\Bigg\}.
    \end{split}
  \end{equation}

  This equation \eqref{eq:pohozaev:alternating} is the polyharmonic map version of Pohozaev identity. The first term involves the $\theta$-derivatives of $u_i$ and decays as we need by \autoref{lem:43}. More precisely, we have
  \begin{equation}\label{eqn:thetadecay}
    \abs{\Theta}\leq C(\sigma) \varepsilon^{1/m} \left(  e^{-\sigma(\log \delta -t)} + e^{-\sigma(t-\log(\lambda_i R))} \right)
  \end{equation}
  for some $\sigma>0$ and any $t \in (\log (2 \lambda_i R), \log(\delta/2))$. To see this, we observe that
  \begin{equation*}
    \nabla_{S^{2m-1}} u_i = \rho \nabla_{\partial B_{\rho}} u_i = \sum_{k=1}^{m(2m-1)} (X_k u_i) \frac{Y_k}{\rho} \qquad \text{by \eqref{eqn:decomp}}
  \end{equation*}
  and
  \begin{equation*}
    \abs{\partial_t^p \nabla^q_{S^{2m-1}} w}\leq C \max_{l=1,\ldots,p+q} \abs{\rho^l \nabla^l w} \qquad \text{for any function } w.
  \end{equation*}
  Hence, \eqref{eqn:thetadecay} follows from \eqref{eqn:tangent} and \eqref{eqn:goodvector}

  Thanks to \autoref{lem:P}, the second term of \eqref{eq:pohozaev:alternating} (up to a sign) dominates
  \begin{equation*}
    \int_{S^{2m-1}} \sum_{n=1}^m \abs{\partial_t^n u_i}^2,
  \end{equation*}
  which is a sum of the radial derivatives. However, \eqref{eq:pohozaev:alternating} does not give a direct comparison between $\Theta$ and the radial derivatives (represented by the second term) because of the existence of the last term.

  Before we prove \eqref{eqn:radial}, we integrate \eqref{eq:pohozaev:alternating} over the whole neck cylinder $S^{2m-1}\times (\log \lambda_i R, \log \delta)$ to see
  \begin{multline*}
    \int_{\log \lambda_i R}^{\log \delta} \int_{S^{2m-1}} \sum_{n=1}^m \abs{\pt_t^{n}u_i}^2 d\theta dt
    \leq C \int_{\log \lambda_i R}^{\log \delta} \int_{S^{2m-1}} \abs{\Theta} d\theta dt\\
    + C\sum_{n=1}^m \sum_{k=1}^{n-1}\sum_{l=1}^{n-k} \left\{ \int_{S^{2m-1}\times \set{\log \lambda_i R}} + \int_{S^{2m-1}\times \set{\log \delta}} \right\} \abs{a_{2n,0} \pt_t^{k+l-1}u_i \pt_t^{2n-k-l}u_i},
  \end{multline*}
  for some constant $C$ depends only on $m$. By \eqref{eqn:thetadecay} and \eqref{eqn:good}, we have
  \begin{equation}
    \int_{\log \lambda_i R}^{\log \delta} \int_{S^{2m-1}} \sum_{n=1}^m \abs{\pt_t^{n}u_i}^2 d\theta dt \leq C \varepsilon^{1/m}.
    \label{eqn:totalsmall}
  \end{equation}
  \begin{rmk}
    Although we do not need this for the proof of our main theorem, we remark that \eqref{eqn:totalsmall} and \eqref{eqn:tangent} together could be used to give an affirmative answer to \eqref{item:Q1} in the introduction. It suffices to get an expression of the energy density in cylinder coordinates and show that the integration of its radial part on the neck domain is small by \eqref{eqn:totalsmall}.
  \end{rmk}

  For the proof of \eqref{eqn:radial}, we notice that \eqref{eq:pohozaev:alternating} amounts to a difference inequality, which gives the desired exponential decay in \eqref{eqn:radial}. To see this, consider some fixed $t_0\in[\log\lambda_i R,\log\delta]$ and for each $t\in[1,\min\set{t_0-\log\lambda_i R,\log\delta-t_0}-1]$, set
  \[
    F(t)=\int_{t_0-t}^{t_0+t}\int_{S^{2m-1}}\sum_{n=1}^m(n-1/2)(-1)^{m-n}a_{2n,0}|\pt_t^{n}u_i|^2d\theta dt.
  \]
  Multiplying by $(-1)^{m+1}$ and integrating \eqref{eq:pohozaev:alternating} over $(t_0-t,t_0+t)$ yields
  \begin{multline*}
    F(t)=\int_{t_0-t}^{t_0+t}\int_{S^{2m-1}}\lbr{(-1)^m\Theta+\pt_t\mbr{\sum_{n=1}^m\sum_{k=1}^{n-1}\sum_{l=1}^{n-k}(-1)^{m+k+l}a_{2n,0}\pt_t^{k+l-1}u_i\pt_t^{2n-k-l}u_i}}\\
    \leq\int_{t_0-t}^{t_0+t}\int_{S^{2m-1}}|\Theta|+\sum_{n=1}^m\sum_{k=1}^{n-1}\sum_{l=1}^{n-k}\lbr{\int_{S^{2m-1}\times\set{t_0-t}}+\int_{S^{2m-1}\times\set{t_0+t}}}|a_{2n,0}\pt_t^{k+l-1}u_i\pt_t^{2n-k-l}u_i|.
  \end{multline*}
  Since $\partial_t u_i$ satisfies some homogeneous elliptic system whose coefficients are well controlled, we have (see \autoref{lem:high} and notice that $k+l-1,2n-k-l\geq 1$)
  \begin{equation}
    |\pt_t^{k+l-1}u_i\pt_t^{2n-k-l}u_i|_{S^{2m-1}\times\set{t}}\leq C\|\pt_tu_i\|_{2;S^{2m-1}\times[t-1,t+1]}^2,
    \label{eq:control_higher_driv_t}
  \end{equation}
  which implies that 
  \begin{equation}
    \begin{split}
      F(t)&\leq\int_{t_0-t}^{t_0+t}\int_{S^{2m-1}}|\Theta|
      +C\sbr{\int_{t_0-t-1}^{t_0-t+1}\int_{S^{2m-1}}+\int_{t_0+t-1}^{t_0+t+1}\int_{S^{2m-1}}}|\pt_tu_i|^2\\
      &\leq\int_{t_0-t}^{t_0+t}\int_{S^{2m-1}}|\Theta|+C\sbr{\int_{t_0-t-1}^{t_0+t+1}
      \int_{S^{2m-1}}-\int_{t_0-t+1}^{t_0+t-1}\int_{S^{2m-1}}}|\pt_tu_i|^2\\
      &\leq\int_{t_0-t}^{t_0+t}\int_{S^{2m-1}}|\Theta|+C\sbr{F(t+1)-F(t-1)}.
    \end{split}
    \label{eq:ode}
  \end{equation}

  Our next lemma makes use of \eqref{eq:ode} to prove the exponential decay that we need.
  \begin{lem}\label{lem:pohozaev}
    Let $t_0$ and $F(t)$ be as above, then 
    \begin{equation}
      F(1)\leq C\eps^{1/m}\sbr{e^{-\tilde{\sigma}(\log\delta-t_0)}+e^{-\tilde{\sigma}(t_0-\log\lambda_i R)}},
      \label{eq:F1_decay}
    \end{equation}
    where $C$ and $\tilde{\sigma}$ are two constants independent of $u_i$ and $\eps$ is the constant in \eqref{eqn:DT}.
  \end{lem}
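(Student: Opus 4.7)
The plan is to convert the difference inequality \eqref{eq:ode} into a geometric--growth statement and then combine it with the a priori bound from \eqref{eqn:totalsmall} to force $F(1)$ to be exponentially small. Two structural facts make this feasible: first, by \autoref{lem:P}, the coefficient $(-1)^{m-n}a_{2n,0}$ is positive for every $n$, so the integrand defining $F$ is pointwise non-negative, whence $F$ is non-decreasing in $t$; second, \eqref{eqn:totalsmall} provides the uniform bound $F(t)\leq C\eps^{1/m}$ throughout the admissible range.

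Setting $\Phi(t):=\int_{t_0-t}^{t_0+t}\int_{S^{2m-1}}|\Theta|$, I would rearrange \eqref{eq:ode} as
\begin{equation*}
F(t+1)\geq F(t-1)+\frac{1}{C}\bigl(F(t)-\Phi(t)\bigr),
\end{equation*}
then use monotonicity $F(t)\geq F(t-1)$ to upgrade this into the geometric--growth inequality
\begin{equation*}
F(t+1)\geq \mu F(t-1)-\frac{1}{C}\Phi(t),\qquad \mu:=1+\tfrac{1}{C}>1.
\end{equation*}
Iterating at $t=2,4,\ldots,2k$ and rearranging gives
\begin{equation*}
F(1)\leq \mu^{-k}F(2k+1)+\frac{1}{C}\sum_{j=1}^{k}\mu^{-j}\Phi(2j).
\end{equation*}

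Next I would bound $\Phi(2j)$ with the help of \eqref{eqn:thetadecay}: a direct computation of the two exponential integrals gives
\begin{equation*}
\Phi(2j)\leq C_\sigma\eps^{1/m}\,e^{2\sigma j}\bigl(e^{-\sigma(\log\delta-t_0)}+e^{-\sigma(t_0-\log\lambda_i R)}\bigr).
\end{equation*}
Choosing $\sigma>0$ small enough that $e^{2\sigma}<\mu$ makes the series $\sum_j(e^{2\sigma}/\mu)^j$ converge, yielding
\begin{equation*}
\frac{1}{C}\sum_{j=1}^k\mu^{-j}\Phi(2j)\leq C\eps^{1/m}\bigl(e^{-\sigma(\log\delta-t_0)}+e^{-\sigma(t_0-\log\lambda_i R)}\bigr).
\end{equation*}

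Finally I would pick $k$ as large as the validity range of \eqref{eq:ode} permits, of order $\tfrac{1}{2}\min(\log\delta-t_0,\,t_0-\log\lambda_i R)$. The bound $F(2k+1)\leq C\eps^{1/m}$ from \eqref{eqn:totalsmall} then gives
\begin{equation*}
\mu^{-k}F(2k+1)\leq C\eps^{1/m}e^{-\frac{\log\mu}{2}\min(\log\delta-t_0,\,t_0-\log\lambda_i R)},
\end{equation*}
and the elementary estimate $e^{-c\min(a,b)}\leq e^{-ca}+e^{-cb}$ turns this into the desired two-term exponential form. Taking $\tilde\sigma:=\min(\sigma,\tfrac{\log\mu}{2})$ then yields \eqref{eq:F1_decay}. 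The only delicate point is choosing $\sigma$ small enough to close the geometric series, but since \eqref{eqn:thetadecay} is valid for any sufficiently small positive $\sigma$ (with a $\sigma$-dependent constant), this is harmless and $\tilde\sigma$ depends only on $m$.
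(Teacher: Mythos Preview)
Your argument is correct and rests on exactly the same ingredients the paper uses: the monotonicity of $F$ coming from \autoref{lem:P}, the difference inequality \eqref{eq:ode}, the decay \eqref{eqn:thetadecay} of $\Theta$, and the global bound \eqref{eqn:totalsmall}. The organization, however, differs. The paper rewrites \eqref{eq:ode} as $F_n\le \Theta_n+C_1(F_{n+1}-F_{n-1})$ and then splits the indices into a set $\mathcal B'$ where $\Theta_n$ dominates (so $F_n\le 2\Theta_n$ directly) and a set $\mathcal B$ where the growth term dominates (so one gets a clean geometric step $F_{n-1}\le \tfrac{2C_1}{2C_1+1}F_{n+1}$ with no $\Theta$ contribution); it then chains the geometric steps until the next index in $\mathcal B'$ (or $n_0$) is reached. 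Your route instead keeps the $\Theta$ term in every step of the iteration and absorbs the accumulated contributions by shrinking $\sigma$ so that $e^{2\sigma}<\mu=1+1/C$, which makes the geometric series $\sum_j(e^{2\sigma}/\mu)^j$ converge. This is legitimate because \eqref{eqn:thetadecay}, once established for $\sigma=1/2$ via \autoref{lem:43}, trivially persists for any smaller $\sigma>0$ with the same constant. The paper's dichotomy avoids having to adjust $\sigma$ and yields $\tilde\sigma=\min\{\sigma,\sigma'\}$ with $\sigma'=\tfrac12\log\tfrac{2C_1+1}{2C_1}$; your direct iteration is slightly shorter but forces $\tilde\sigma<\tfrac12\log\mu$. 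Either way the resulting $\tilde\sigma$ depends only on $m$.
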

  We postpone the proof of \autoref{lem:pohozaev} to \autoref{sec:pohozaev}. Assuming \autoref{lem:pohozaev}, \eqref{eqn:radial} follows from \eqref{eq:F1_decay} and \autoref{lem:high}.

  \section{Three circle lemma for \texorpdfstring{$m$}{m}-polyharmonic function}\label{sec:3circle}
  In this section, we prove \autoref{lem:3circle}, which is a property of $m$-polyharmonic function. By a well known argument of separation of variables, we can write down all $m$-polyharmonic functions defined on an annular domain.

  \begin{lem}
    \label{lem:polyharmonic} Suppose that $u$ is an $m$-polyharmonic function defined on $B_{r_2}\setminus B_{r_1} \subset \RB^{2m}$ and let $(t,\theta)$ be the cylinder coordinates as before. Then there exist real numbers $A_0,B_0, C_{0,k}, D_{0,k}$ and $C_{n,k}^l, D_{n,k}^l$ such that
    \begin{align*}
      u(t,\theta)&=A_0+B_0t+\sum_{k=2}^m\sbr{C_{0,k}e^{2(k-1)t}+D_{0,k}e^{-2(k-1)t}}\\
                 &\qquad+\sum_{n=1}^\infty\sum_{l=1}^{h_n}\sum_{k=1}^m\sbr{C_{n,k}^le^{(n+2(k-1))t}
      +D_{n,k}^le^{-(n+2(k-1))t}}\phi_n^l(\theta).
    \end{align*}
    Here $\set{\phi_n^l|\, n\in \mathbb N \text{ and } l=1,\cdots,h_n}$ is the set of normalized spherical harmonic functions such that
    \[
      \Delta_{S^{2m-1}}\phi_n^l=-n(n+2m-2)\phi_n^l
    \] 
    and they form an orthonormal basis of $L^2(S^{2m-1})$. 

    Moreover, if for each $\rho\in (r_1,r_2)$
    \begin{equation*}
      \int_{\partial B_\rho} u =0,
    \end{equation*}
    then $A_0=B_0=C_{0,k}=D_{0,k}=0$ in the above expansion.
  \end{lem}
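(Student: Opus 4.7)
The plan is to use separation of variables in the spherical harmonic basis on $S^{2m-1}$, so that the polyharmonic equation decouples into constant-coefficient ODEs for each Fourier mode in $\theta$. Writing $u(t,\theta)=\sum_{n\geq 0}\sum_{l=1}^{h_n}f_{n,l}(t)\phi_n^l(\theta)$, the first task is to bring $\Delta^m$ into a form in which $\partial_t$ and $\Delta_{S^{2m-1}}$ are clearly separated. Starting from $\Delta u=e^{-2t}(\partial_t^2+2(m-1)\partial_t+\Delta_{S^{2m-1}})u$ and using the commutation relation $\partial_t^{\,j}e^{-2kt}=e^{-2kt}(\partial_t-2k)^j$, I would prove by induction on $m$ that
\begin{equation*}
  \Delta^m u = e^{-2mt}\prod_{k=0}^{m-1}\Bigl[(\partial_t-2k)(\partial_t-2k+2m-2)+\Delta_{S^{2m-1}}\Bigr]u,
\end{equation*}
where the factors commute. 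This exhibits the polynomial $P$ of \eqref{eqn:P} in factored form.

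With this factorization in hand, plugging in $u=f_{n,l}(t)\phi_n^l(\theta)$ and using $\Delta_{S^{2m-1}}\phi_n^l=-n(n+2m-2)\phi_n^l$ reduces $\Delta^m u=0$ to the constant-coefficient ODE
\begin{equation*}
  \prod_{k=0}^{m-1}\Bigl[(\partial_t-2k)(\partial_t-2k+2m-2)-n(n+2m-2)\Bigr]f_{n,l}=0.
\end{equation*}
Solving $(\alpha-2k)(\alpha-2k+2m-2)=n(n+2m-2)$ gives $\alpha=2k+n$ or $\alpha=2k-n-(2m-2)$; letting $k$ range over $0,\ldots,m-1$ and re-indexing by $k'=k+1$ yields exactly the roots $\alpha=\pm(n+2(k'-1))$ for $k'=1,\ldots,m$. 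For $n\geq 1$ these $2m$ roots are pairwise distinct, producing the asserted exponentials $e^{\pm(n+2(k-1))t}$; for $n=0$ the value $\alpha=0$ occurs with multiplicity two (from the two branches at $k=0$), so the general solution for the zero mode picks up the linear term $B_0 t$ alongside the constant $A_0$ and the remaining exponentials $e^{\pm 2(k-1)t}$ for $k=2,\ldots,m$. Assembling the modes gives the stated expansion.

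For the final assertion, the hypothesis $\int_{\partial B_\rho}u=0$ for all $\rho\in(r_1,r_2)$ translates, by orthonormality of $\{\phi_n^l\}$ and the fact that $\phi_0$ is constant while $\int_{S^{2m-1}}\phi_n^l=0$ for $n\geq 1$, into $f_{0,1}(t)\equiv 0$ on $(\log r_1,\log r_2)$. Since $\{1,\,t,\,e^{\pm 2(k-1)t}:k=2,\ldots,m\}$ is a linearly independent family of real-analytic functions, the coefficients $A_0, B_0, C_{0,k}, D_{0,k}$ must all vanish. The one non-routine point is the inductive factorization of $\Delta^m$ in cylinder coordinates; once that identity is established, everything else is a direct constant-coefficient ODE calculation and a linear independence argument.
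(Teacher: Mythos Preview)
Your proof is correct and is exactly the separation-of-variables argument the paper invokes without giving details; the factorization of $\Delta^m$ you derive is equivalent to the product formula \eqref{eqn:P1} established in \autoref{sec:pohozaev}. One cosmetic slip: the induction should be phrased as induction on the power of the Laplacian (with the half-dimension $m$ fixed), not ``induction on $m$'', but the computation itself is right.
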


  For the proof of \autoref{lem:3circle}, we first observe that by the scaling invariance it suffices to prove the lemma for $i=1$. Then we compute $F_i(u)$ in cylinder coordinates, using \autoref{lem:polyharmonic}, 
  \begin{align*}
    F_i(u)&=\int_{e^{-iL}}^{e^{-(i-1)L}}\int_{S^{2m-1}}\frac{u^2}{\rho^{2m}}\rho^{2m-1}d\rho d\theta
    =\int_{-iL}^{-iL+L}\int_{S^{2m-1}}u^2dtd\theta\\
    &=\int_{-iL}^{-iL+L}\sum_{n=1}^\infty\sum_{l=1}^{h_n}\mbr{\sum_{k=1}^m\sbr{C_{n,k}^le^{(n+2(k-1))t}
    +D_{n,k}^le^{-(n+2(k-1))t}}}^2dt.
  \end{align*}
  Here in the above computation, we exploit the fact that $\set{\phi_n^l}$ is an orthonormal basis. As a consequence of the above computation, it suffices to prove \autoref{lem:3circle} for 
  \begin{equation*}
    u(t,\theta) = \sum_{k=1}^m \left( A_k e^{(n+2(k-1))t} + B_k e^{-(n+2(k-1))t} \right) \phi^l_n(\theta)
  \end{equation*}
  with fixed $l$ and $n$.

  For simplicity, set
  \begin{equation*}
    n_k= n+2(k-1),\quad k=1,2,\ldots,m
  \end{equation*}
  and
  \begin{equation*}
    G(t)= \left( \sum_{k=1}^m (A_k e^{n_k t}+ B_k e^{-n_k t}) \right)^2.
  \end{equation*}
  \autoref{lem:3circle} is now reduced to the claim that there exists some universal number $L>0$ such that
  \begin{equation}\label{eqn:reduce}
    2 \int_{-L}^0 G(t) dt < e^{-L} \int_{-L}^0 G(t-L) + G(t+L) dt
  \end{equation}
  holds for any $A_k$ and $B_k$. Since $A_k$ and $B_k$ are arbitrary, \eqref{eqn:reduce} can be replaced by
  \begin{equation}
    2 \int_{-L/2}^{L/2} G(t) dt < e^{-L} \int_{-L/2}^{L/2} G(t-L) + G(t+L) dt\eqdef e^{-L}(\I + \II)
    \label{eqn:final}
  \end{equation}
  By the elementary inequality
  \begin{equation*}
    (x+y)^2 \geq \frac{x^2}{2}-y^2, \quad  \forall x,y\in \mathbb R,
  \end{equation*}
  we compute
  \begin{align*}
    \I&=\int_{-3L/2}^{-L/2}\mbr{\sum_{k=1}^m\sbr{A_ke^{n_kt}+B_ke^{-n_kt}}}^2dt\\
      &=\int_{L/2}^{3L/2}\mbr{\sum_{k=1}^m\sbr{A_ke^{-n_kt}+B_ke^{n_kt}}}^2dt\\
      &\geq\frac{1}{2}\int_{L/2}^{3L/2}\sbr{\sum_{k=1}^mB_ke^{n_kt}}^2dt
    -\int_{L/2}^{3L/2}\sbr{\sum_{k=1}^mA_ke^{-n_kt}}^2dt\eqdef\frac{\I_1}{2}-\I_2,
  \end{align*}
  and similarly
  \begin{align*}
    \II&=\int_{L/2}^{3L/2}\mbr{\sum_{k=1}^m\sbr{A_ke^{n_kt}+B_ke^{-n_kt}}}^2dt\\
       &\geq\frac{1}{2}\int_{L/2}^{3L/2}\sbr{\sum_{k=1}^mA_ke^{n_kt}}^2dt
    -\int_{L/2}^{3L/2}\sbr{\sum_{k=1}^mB_ke^{-n_kt}}^2dt\eqdef\frac{\II_1}{2}-\II_2.
  \end{align*}

  We need the following lemma
  \begin{lem}\label{lem:claim}
    There exists $L_0>0$ and $\tilde{C}$ depending only on $m$ such that for any $x_0\in \RB$, $A_k\in \RB$, $L>L_0$ and $n\in \mathbb N$,
    \[
      \int_{x_0}^{x_0+L}\sbr{\sum_{k=1}^mA_ke^{n_kt}}^2dt
      \geq\delta_{m,n}\int_{x_0}^{x_0+L}\sum_{k=1}^mA_k^2e^{2n_kt}dt,
    \]
    where $\delta_{m,n}=\tilde{C} e^{-nL/2}$.
  \end{lem}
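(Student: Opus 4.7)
By the translation $t\mapsto s+x_0+L/2$ and the rescaling $\tilde A_k\eqdef A_k e^{n_k(x_0+L/2)}$, both sides of the desired inequality transform identically, so we may assume $x_0=-L/2$. Introducing the normalised exponentials $\phi_k\eqdef e^{n_ks}/\|e^{n_ks}\|_{L^2[-L/2,L/2]}$ and $B_k\eqdef\tilde A_k\|e^{n_ks}\|_{L^2[-L/2,L/2]}$, the claim is equivalent to $B^\top HB\geq\delta_{m,n}\|B\|^2$, where $H_{jk}\eqdef\langle\phi_j,\phi_k\rangle$. A direct computation gives $H_{jj}=1$ and
\[
H_{jk}=\frac{2\sqrt{n_jn_k}}{n_j+n_k}\cdot\frac{\sinh((n_j+n_k)L/2)}{\sqrt{\sinh(n_jL)\sinh(n_kL)}}\qquad(j\neq k),
\]
which is manifestly independent of $x_0$. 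Thus the task reduces to a lower bound on $\lambda_{\min}(H)$.

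I would analyse first the $L\to\infty$ limit $M_{jk}\eqdef 2\sqrt{n_jn_k}/(n_j+n_k)$, which factors as $M=DCD$ with $D=\operatorname{diag}(\sqrt{n_k})$ and the Cauchy matrix $C_{jk}=2/(n_j+n_k)$. The Cauchy determinant formula gives
\[
\det M=\prod_k n_k\cdot\frac{2^m\prod_{i<j}(n_i-n_j)^2}{\prod_{i,j}(n_i+n_j)}.
\]
With $n_k=n+2(k-1)$, the factor $\prod_{i<j}(n_i-n_j)^2$ is a positive constant depending only on $m$, $\prod_k n_k$ is a monic polynomial of degree $m$ in $n$, and $\prod_{i,j}(n_i+n_j)$ is of degree $m^2$; a degree count, combined with positivity and compactness at small $n$, produces a constant $c_m>0$ with $\det M\geq c_m(1+n)^{-m(m-1)}$. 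Since $\operatorname{tr}M=m$, the elementary bound $\lambda_{\min}(M)\geq\det M/(\operatorname{tr}M)^{m-1}$ yields the uniform-in-$n$ polynomial estimate $\lambda_{\min}(M)\geq c_m'(1+n)^{-m(m-1)}$.

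To transfer this to $H$ at finite $L$, set $\alpha\eqdef e^{-n_jL}$, $\beta\eqdef e^{-n_kL}$ and observe via the algebraic identity $(1-\alpha\beta)^2-(1-\alpha^2)(1-\beta^2)=(\alpha-\beta)^2$ that both $H_{jk}\geq M_{jk}$ and $|H_{jk}-M_{jk}|\leq Ce^{-2nL}$ uniformly in $j,k,n$ for $L\geq 1$. Hence $\|H-M\|_{\mathrm{op}}\leq Cme^{-2nL}$ and Weyl's inequality gives $\lambda_{\min}(H)\geq c_m'(1+n)^{-m(m-1)}-Cme^{-2nL}$. Because $m(m-1)\log(1+n)/n\to 0$, one can choose $L_0=L_0(m)$ so large that the exponential correction is dominated by half the polynomial bound for every $n\geq 1$, giving $\lambda_{\min}(H)\geq\tfrac12 c_m'(1+n)^{-m(m-1)}$ for all $L\geq L_0$. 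Enlarging $L_0$ once more by the same reasoning so that $(1+n)^{m(m-1)}\leq\tilde C^{-1}e^{nL/2}$ for every $n\geq 1$ and $L\geq L_0$ converts this into the advertised form $\lambda_{\min}(H)\geq\tilde Ce^{-nL/2}$. The main obstacle is the uniform-in-$n$ polynomial lower bound on $\lambda_{\min}(M)$: the Cauchy determinant pins down the $n$-dependence exactly, but the sharp exponent $m(m-1)$ only emerges after carefully combining the degree-$m$ factor $\prod_k n_k$ with the degree-$m^2$ factor $\prod_{i,j}(n_i+n_j)$, and any slack here would fail to absorb the exponentially decaying correction from $H-M$.
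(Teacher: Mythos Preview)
Your proof is correct and follows essentially the same route as the paper: reduce by translation to a Gram--matrix eigenvalue problem, split the Gram matrix into the Cauchy-type limit $M_{jk}=2\sqrt{n_jn_k}/(n_j+n_k)$ plus an error of size $O(e^{-2nL})$, bound $\lambda_{\min}(M)$ from below via $\det M/(\operatorname{tr}M)^{m-1}$, and absorb the error by choosing $L_0$ large. The only cosmetic differences are that you invoke the Cauchy determinant formula explicitly to extract the polynomial decay $c_m'(1+n)^{-m(m-1)}$ before converting to the exponential form $\tilde C e^{-nL/2}$, whereas the paper argues more qualitatively that $\det\bar M$ is a positive rational function of $n$ with denominator degree at most $m^2$ and passes directly to the exponential bound.
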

  This result is the ``weak orthogonal'' relation mentioned in the introduction and we have an explicit formula for $\delta_{m,n}$, from which we learn that the ``orthogonal'' relation gets weaker and weaker as $n$ gets larger. Fortunately, this is enough for the proof of \autoref{lem:3circle}. Before the proof of \autoref{lem:claim}, we show how \autoref{lem:3circle} follows from it.

  By \autoref{lem:claim} and $n_k\geq n$, 
  \begin{equation}
    \I_1 + \II_1 \geq\delta_{m,n}\int_{L/2}^{3L/2}\sum_{k=1}^m (A_k^2 + B_k^2) e^{2n_kt}dt \geq \delta_{m,n}Le^{nL} \sum_{k=1}^m (A_k^2+B_k^2).
    \label{eqn:one}
  \end{equation}
  By Schwartz inequality and $n_k\geq n$ again,
  \[
    \I_2 + \II_2\leq m\int_{L/2}^{3L/2}\sum_{k=1}^m( A_k^2 + B_k^2) e^{-2n_kt}dt\leq m Le^{-nL} \sum_{k=1}^m (A_k^2+B_k^2).
  \]
  There is $L_1>L_0$ (depending only on $m$) such that for all $L>L_1$, we have
  \[
    \delta_{m,n}e^{nL}= \tilde{C} e^{nL/2} \geq 4me^{-nL},\quad\forall n\geq1,
  \]
  which implies that
  \begin{equation*}
    \I_1+ \II_1 \geq 4 (\I_2 +\II_2)
  \end{equation*}
  and
  \begin{equation}
    \I+\II\geq\frac{1}{2}\sbr{\I_1+\II_1}-\I_2-\II_2\geq\frac{1}{4}\sbr{\I_1+\II_1}.
    \label{eqn:quater}
  \end{equation}

  Back to the proof of \eqref{eqn:final}, its right hand side is estimated by
  \begin{eqnarray*}
    \mathrm{RHS} &\geq& e^{-L}\sbr{\I+\II}\geq\frac{e^{-L}}{4}\sbr{\I_1+\II_1} \\
                 &\geq&\frac{\delta_{m,n}e^{-L}}{4}\int_{L/2}^{3L/2}\sum_{k=1}^m\sbr{A_k^2+B_k^2}e^{2n_kt}dt \\
                 &=& \sum_{k=1}^m \left( \frac{\delta_{m,n} e^{-L}}{8n_k} (e^{3n_k L}- e^{n_k L})\right) \cdot  (A_k^2+ B_k^2),
  \end{eqnarray*}
  where we have used \eqref{eqn:quater} and the first half of \eqref{eqn:one}.  
  On the other hand, by Schwartz inequality, the left hand side of \eqref{eqn:final} is estimated by
  \begin{eqnarray*}
    \mathrm{LHS} &\leq& 4m\int_{-L/2}^{L/2}\sum_{k=1}^m\sbr{A_k^2e^{2n_kt}+B_k^2e^{-2n_kt}}dt \\
                 &\leq& \sum_{k=1}^m 4m L e^{n_k L} (A_k^2 + B_k^2)
  \end{eqnarray*}
  Since $\sum_{k=1}^m A_k^2+B_k^2\ne 0$, for the proof of \eqref{eqn:final}, it suffices to show 
  \[
    4me^{n_k L}L< \frac{\delta_{m,n}e^{-L}}{8n_k}(e^{3n_kL}-e^{n_kL}), \quad \forall k=1,\cdots,m.
  \] 
  Using $\delta_{m,n}=\tilde{C}e^{-nL/2}$ and $n\leq n_k <n+2m$, it is further reduced to
  \begin{equation}
    32m(n+2m)L< \tilde Ce^{-(n/2+1)L}\sbr{e^{2n L}-1}.
    \label{eqn:done}
  \end{equation}

  In summary, to prove \autoref{lem:3circle}, it suffices to find $L$ such that \eqref{eqn:done} holds for any $n$. For that purpose, we choose $L>L_1$ satisfying
  \begin{enumerate}
    \item \label{item:clone_i}
      \begin{equation*}
        e^{2nL}-1 > \frac{1}{2}e^{2nL},\quad \forall n\in \mathbb N;
      \end{equation*}	
    \item \label{item:clone_iii}
      \begin{equation*}
        32 m (nL) \leq \frac{\tilde{C}}{4} e^{nL/2},\quad \forall n\in \mathbb N;
      \end{equation*}
    \item \label{item:clone_ii}
      \begin{equation*}
        64m^2 L \leq \frac{\tilde{C}}{4} e^{L/2}.
      \end{equation*}
  \end{enumerate}
  \eqref{item:clone_i} implies that the right hand side of \eqref{eqn:done} is strictly larger than 
  \begin{equation*}
    \frac{\tilde{C}}{2} e^{3nL/2-L} \geq \frac{\tilde{C}}{2} e^{nL/2} \geq \frac{\tilde{C}}{4}\left( e^{nL/2}+e^{L/2} \right).
  \end{equation*}
  On the other hand, by (ii) and (iii), the left hand side of \eqref{eqn:done} is no larger than
  \begin{equation*}
    \frac{\tilde{C}}{4}(e^{nL/2}+e^{L/2}).
  \end{equation*}
  Hence we finish the proof of \autoref{lem:3circle}.

  \vskip 2mm
  The rest of this section is devoted to the proof of \autoref{lem:claim}. By choosing a different set of $A_k$ and setting $t'=t-x_0$ if necessary, we may assume that $x_0=0$. Setting
  \begin{equation*}
    f_k(t) = \frac{e^{n_k t}}{ \left( \int_0^L e^{2n_k t} dt \right)^{1/2}}
  \end{equation*}
  and exploiting the arbitrariness of $A_k$ again, \autoref{lem:claim} is reduced to the inequality
  \begin{equation}
    \int_{0}^{L}\sbr{\sum_{k=1}^mA_kf_k(t)}^2dt\geq\delta_{m,n}\int_{0}^{L}\sum_{k=1}^mA_k^2f_k^2(t)dt=\delta_{m,n}\sum_{k=1}^mA_k^2.
    \label{eq:3circ:claim:reduction}
  \end{equation}

  Consider $m\times m$ matrix $M$, the entry of which in the $k$-th row and $j$-th column is given by
  \[
    M_{kl}\eqdef\inner{f_k,f_l}=\int_{0}^{L}f_k(t)f_l(t)dt.
  \]
  Clearly, $M$ is positive definite as $\set{f_k}_{k=1}^m$ are linearly independent. Denote the (real) eigenvalues of $M$ by $\lambda_m\geq\lambda_{m-1}\geq\cdots\geq\lambda_1> 0$. It suffices to show that $\lambda_1\geq\delta_{m,n}$, which is equivalent to \eqref{eq:3circ:claim:reduction}. Direct computation shows that
  \[
    M_{kl}=\frac{2\sqrt{n_kn_l}}{n_k+n_l}\frac{1-e^{-(n_k+n_l)L}}{\sqrt{1-e^{-2n_kL}}\sqrt{1-e^{-2n_lL}}}.
  \]
  The lower bound of $\lambda_1$ is obtained by taking $M$ as the perturbation of $\bM$ with error $E$, i.e., $M=\bM+ E$, where
  \[
    \bM_{kl}=2\frac{\sqrt{n_kn_l}}{n_k+n_l}.
  \]
  $\bM$ is also positive definite. An easy way to see this is to notice that
  \begin{equation*}
    \bM_{kl}= \int_0^{+\infty} \tilde{f}_k \tilde{f}_l dt
  \end{equation*}
  where
  \begin{equation*}
    \tilde{f}_k(t)= \frac{e^{n_k t}}{\left( \int_0^{+\infty} e^{2n_k t} dt \right)^{1/2}},
  \end{equation*}
  and that $\set{\tilde{f}_k}_{k=1}^m$ are linearly independent on $(0,+\infty)$.

  The smallest eigenvalue $\bar\lambda_1$ of $\bM$ is related to $\lambda_1$ by
  \begin{align*}
    \lambda_1&=\inf_{|X|=1}X^TMX=\inf_{|X|=1}\sbr{X^T\bM X+X^TEX}\\
             &\geq\bar\lambda_1-\sup_{|X|=1}|X^TEX|\geq\bar\lambda_1-|\lambda_E|,
  \end{align*}
  where $|\lambda_E|=\sup_{|X|=1}|X^TEX|$. Hence, a lower bound of $\lambda_1$ follows from a lower bound of $\bar{\lambda}_1$ and an upper bound of $\abs{\lambda_E}$.

  Note that $E$ is symmetric, thus for some orthogonal matrix $P$, $P^TEP=\Lambda$ is diagonal. One can estimate $\lambda_E$ by 
  \begin{align*}
    |\lambda_E|^2&=\sup_{|Y|=1}|Y^T\Lambda Y|^2\leq\tr(\Lambda^2)=\tr(E^2)\leq m^2\max_{k,l}|E_{kl}|^2\\
                 &=m^2\max_{k,l}\abs{\frac{2\sqrt{n_kn_l}}{n_k+n_l} \sbr{\frac{1-e^{-(n_k+n_l)L}}{\sqrt{1-e^{-2n_kL}}\sqrt{1-e^{-2n_lL}}}-1}}^2\\
                 &\leq m^2\max_{k,l} \sbr{\frac{1-e^{-(n_k+n_l)L}}{\sqrt{1-e^{-2n_kL}}\sqrt{1-e^{-2n_lL}}}-1}^2\\
                 &\leq m^2\sbr{\frac{1}{1-e^{-2nL}}-1}^2=m^2\sbr{\frac{e^{-2nL}}{1-e^{-2nL}}}^2\leq 4m^2e^{-4nL},\quad\forall L\geq1,
  \end{align*}
  where we have used (in the forth line above) the facts
  \[
    n_k\geq n\geq1\xtext{and}\frac{1-xy}{\sqrt{1-x^2}\sqrt{1-y^2}}-1\geq0,\quad\forall x,y\in(0,1).
  \]
  Thus
  \begin{equation}
    |\lambda_E|\leq2me^{-2nL},\quad\forall L\geq1.
    \label{eq:3circ:lambda_E}
  \end{equation}

  Next, we will estimate $\bar{\lambda}_1$. Clearly, $\tr(\bM)=m$, thus
  \[
    0<\bar\lambda_1\leq\bar\lambda_2\leq\cdots\leq \bar\lambda_m<m.
  \]
  If we regard $m$ as a fixed number, then $\det \bM$ is a positive function of $n$ alone, which is a rational function (with integer coefficients) of $n$ with the degree of denominator no greater than (very roughly) $m^2$. (To see that $\det \bM$ is a rational function, it suffices to notice that in the definition of determinant, $\sqrt{n_k}$ appears twice: once in the entry in the $k$-th row, once in the entry in the $k$-th column.) Thus for any $L\geq1$, there is $C>0$ depending on $m$, such that 
  \[
    e^{nL/2}\det\bM\geq e^{n/2}\det\bM\geq C>0,\quad \forall n\in \mathbb N.
  \]

  Then $\bar\lambda_1$ can be estimated as
  \[
    \bar\lambda_1=\frac{\det\bM}{\bar\lambda_2\cdots\bar\lambda_m}\geq Ce^{-nL/2}m^{-(m-1)}\eqdef 2\tilde Ce^{-nL/2}.
  \]
  Combining the above estimate with \eqref{eq:3circ:lambda_E}, we obtain
  \[
    \lambda_1\geq\bar\lambda_1-|\lambda_E|
    \geq2\tilde Ce^{-nL/2}\sbr{1-\frac{m}{\tilde C}e^{-3nL/2}}
    \geq2\tilde Ce^{-nL/2}\sbr{1-\frac{m}{\tilde C}e^{-3L/2}}.
  \]
  Thus if we take $L$ large enough (depending only on $m$) such that
  \[
    1-\frac{m}{\tilde C}e^{-3L/2}\geq\frac{1}{2},
  \]
  then
  \[
    \lambda_1\geq\tilde C e^{-nL/2},
  \]
  and we finish the proof of \autoref{lem:claim}.

  \section{The Pohozaev argument}\label{sec:pohozaev}
  In this section, we complete the proof outlined in \autoref{sub:pohozaev}. Precisely, we give details of the proofs of \autoref{lem:P}, \autoref{lem:first} and \autoref{lem:pohozaev}. Each proof takes up a subsection.

  \subsection{The structure of the polynomial \texorpdfstring{$P$}{P}}
  Recall that we have defined $P$ to be the polynomial satisfying
  \begin{equation*}
    \Delta^m u = e^{-2mt} P(\partial_t, \Delta_{S^{2m-1}}) u.
  \end{equation*}
  The structure of $P$ as described in \autoref{lem:P} is revealed by an induction process. For that purpose, we define
  \begin{equation*}
    \Delta^l u = e^{-2lt} P_l(\partial_t,\Delta_{S^{2m-1}})u
  \end{equation*}
  for $l=1,2,\cdots,m$ with $P=P_m$ and 
  \begin{equation*}
    P_1(\partial_t,\Delta_{S^{2m-1}})= \partial_t^2 + 2(m-1)\partial_t + \Delta_{S^{2m-1}}= \partial_t (\partial_t + 2(m-1)) + \Delta_{S^{2m-1}}.
  \end{equation*}
  By definition, 
  \begin{eqnarray*}
    P_{l+1} u &=&  e^{2(l+1) t} \Delta^{l+1} u \\
              &=& e^{2(l+1) t} \Delta \left( e^{-2l t} P_lu \right) \\
              &=& e^{2l t} (\partial_t^2 + 2(m-1)\partial_t + \Delta_{S^{2m-1}}) \left( e^{-2l t} P_lu \right).
  \end{eqnarray*}
  Direct computation shows
  \begin{equation}\label{eqn:plone}
    P_{l+1}u = \left(\Delta_{S^{2m-1}}+ \partial_t^2+ 2(m-1-2l)\partial_t - 4l(m-1-l) \right) P_lu.
  \end{equation}
  If we set
  \begin{equation*}
    \square_l = \partial_t^2 + \Delta_{S^{2m-1}} - 4l(m-1-l)
  \end{equation*}
  and
  \begin{equation*}
    b_l=2(m-1-2l),
  \end{equation*}
  then \eqref{eqn:plone} becomes
  \begin{equation}\label{eqn:pl2}
    P_{l+1} = (\square_l + b_l \partial_t) P_l
  \end{equation}
  for $l=0,1,\cdots,m-1$, if we define $P_0$ by $P_0u=u$. 

  Using \eqref{eqn:pl2}, we obtain
  \begin{equation}\label{eqn:P1}
    P(u)=P_m(u)=(\square_{m-1} + b_{m-1} \partial_t)\cdots(\square_0+b_0\pt_t) u.
  \end{equation}
  It is elementary to check that
  \begin{enumerate}
    \item $\square_l$ and $b_k\partial_t$ are commutative;
    \item $\square_l=\square_{m-1-l}$;
    \item $b_l = -b_{m-1-l}$. In particular, if $m-1=2l$, then $b_l=b_{m-1-l}=0$.
  \end{enumerate}
  Using these observations, we can rewrite \eqref{eqn:P1} according to the parity of $m$: 
  \begin{itemize}
    \item if $m=2l$ is even, then
      \begin{equation*}
        P(u)=\left( \square_l^2 - b_l^2 \partial_t^2 \right)\cdots \left( \square_0^2 - b_0^2 \partial_t^2 \right);
      \end{equation*}
    \item if $m=2l+1$ is odd, then
      \begin{equation*}
        P(u)=\square_{l}\left( \square_{l-1}^2 - b_{l-1}^2 \partial_t^2 \right)\cdots\left( \square_0^2 - b_0^2 \partial_t^2 \right).
      \end{equation*}
  \end{itemize}
  In either case, we know that there is no odd power of $\partial_t$ in $P$, which is the first claim in \autoref{lem:P}.

  To show the alternating property of $a_{p,0}$, we note that 
  \begin{align*}
    P(\pt_t,0)&=\prod_{l=0}^{m-1}\sbr{\pt_t^2-4l(m-1-l)+2(m-1-2l)\pt_t}\\
              &=\prod_{l=0}^{m-1}\sbr{\pt_t-2l}\sbr{\pt_t+2(m-1-l)}\\
              &=\prod_{l=0}^{m-1}\sbr{\pt_t-2l}  \prod_{j=0}^{m-1} \sbr{\pt_t+2j}\\
    \qquad&=\pt_t^2(\pt_t^2-2^2)\cdots(\pt_t^2-(2(m-1))^2),
  \end{align*}
  which clearly implies \eqref{eq:coeffi}.
  \subsection{The relative Pohozaev identity}
  In this subsection, we use \autoref{lem:P} to prove \autoref{lem:first}. Before we start the proof, let's explain the reason why \autoref{lem:first} is called ``the relative Pohozaev identity''. There is a harmonic map version of \autoref{lem:first}, which can be proved by simple integration by parts. It says that for a harmonic map $u$ defined on an annular domain of $\RB^2$, we have (in cylinder coordinates again)
  \begin{equation}\label{eqn:relative}
    \partial_t \int_{S^1} \abs{\partial_t u}^2 - \abs{\partial_\theta u}^2 d\theta =0.
  \end{equation}
  Instead of asserting the equality of the tangential and the radial energy, it implies that the difference is independent of $t$. Of course, in every interesting application, with reasonable assumptions, we know that
  \begin{equation*}
    \lim_{t\to -\infty} \int_{S^1} \abs{\partial_t u}^2 - \abs{\partial_\theta u}^2 d\theta =0.
  \end{equation*}
  Hence, integrating \eqref{eqn:relative} over $t$ gives the usual Pohozaev identity. That's why we call \eqref{eqn:relative} the relative Pohozaev identity. Obviously, \autoref{lem:first} is the polyharmonic map version of \eqref{eqn:relative}. 

  For the proof of \autoref{lem:first}, we compute (using \eqref{eq:polynomial_P}),
  \begin{equation*}
    Q_{p,q}\eqdef \int_{S^{2m-1}} a_{p,q} \partial_t^p \Delta_{S^{2m-1}}^q u \cdot \partial_t u d\theta.
  \end{equation*}
  By integration by parts, 
  \begin{equation*}
    Q_{p,q} = \int_{S^{2m-1}} (-1)^q a_{p,q} \partial_t^p w_q \cdot \partial_t w_q d\theta
  \end{equation*}
  for
  \begin{equation*}
    w_q =\left\{
      \begin{array}[]{ll}
        \Delta_{S^{2m-1}}^l u & q=2l \\
        \nabla_{S^{2m-1}} \Delta_{S^{2m-1}}^{l} u & q =2l+1.
      \end{array}
    \right.
  \end{equation*}

  Before we proceed, we list two elementary formulas that can be checked by direct computation, 
  \begin{itemize}
    \item for all $n\in \mathbb N$,
      \begin{align}\label{eqn:1}
        \pt_t^{2n}w \cdot \pt_t^{1}w &= \partial_t \left( (-1)^{n+1} \frac{1}{2} \abs{\pt_t^{n}w}^2 + \sum_{k=1}^{n-1} (-1)^{k+1} \pt_t^{k}w \cdot \pt_t^{2n-k}w \right); 	 
      \end{align}
    \item for all $n\in \mathbb N$ and $1\leq k\leq n-1$,
      \begin{align}
        \label{eqn:2}
        \pt_t^{k}w \cdot \pt_t^{2n-k}w &= \partial_t \left( \sum_{l=1}^{n-k} (-1)^{l-1} \pt_t^{k+l-1}w \cdot \pt_t^{2n-k-l}w \right) + (-1)^{n-k} \abs{\pt_t^{n}w}^2.
      \end{align}
  \end{itemize}

  Using \eqref{eqn:1} and the first assertion of \autoref{lem:P}, 
  \begin{multline*}
    \int_{S^{2m-1}} P(u) \cdot \partial_t u d\theta = \sum_{\stackrel{1\leq p+2q\leq 2m}{p,q\in \mathbb N\cup\set{0}}} \int_{S^{2m-1}} (-1)^q a_{p,q} \pt_t^{p}w_q \cdot \pt_t w_q  d\theta \\
    \begin{aligned}
      &= \partial_t \left[  \sum_{\stackrel{1\leq n+q\leq m}{n,q\in \mathbb N\cup\set{0}}}
    (-1)^q a_{2n,q} \int_{S^{2m-1}} (-1)^{n+1} \frac{1}{2} \abs{\pt_t^{n}w_q}^2 + \sum_{k=1}^{n-1} (-1)^{k+1} \pt_t^{k}w_q \cdot \pt_t^{2n-k}w_q d\theta \right] \\
    &= \partial_t \left[ \sum_{n=1}^m a_{2n,0}\int_{S^{2m-1}}
  (-1)^{n+1} \frac{1}{2} \abs{\pt_t^{n}u}^2 + \sum_{k=1}^{n-1} (-1)^{k+1} \pt_t^ku \cdot \pt_t^{2n-k}u d\theta + \Theta \right].
\end{aligned}
\end{multline*}
The required form of $Q$ given in \eqref{eq:pohozaev:normal} is obtained by using \eqref{eqn:2} to expand the $\partial_t^k u \cdot \partial_t^{2n-k} u$ in the above line. This concludes the proof of \autoref{lem:first}.
\subsection{The exponential decay}
The starting point of the proof of \autoref{lem:pohozaev} is \eqref{eq:ode}. For simplicity, we assume that $\min\set{\log \delta -t_0, t_0-\log (\lambda_i R)}$ is an integer, which we denote by $n_0$. By setting
\begin{equation*}
  F_n \eqdef F(n),\quad\forall n=0,1,\ldots,n_0,
\end{equation*}
and
\begin{equation*}
  \Theta_n\eqdef \int_{t_0-n}^{t_0+n} \int_{S^{2m-1}} \abs{\Theta} d\theta ds,
\end{equation*}
\eqref{eq:ode} becomes
\begin{equation}\label{eqn:iteration}
  F_n\leq \Theta_n + C_1 (F_{n+1}-F_{n-1}).
\end{equation}

Integrating \eqref{eqn:thetadecay} over $(t_0-n,t_0+n)$ implies that
\begin{equation*}
  \Theta_n \leq C \varepsilon^{1/m} e^{\sigma n}\left( e^{-\sigma (\log \delta -t_0)} + e^{-\sigma(t_0-\log(\lambda_i R))} \right),
\end{equation*}
or equivalently 
\begin{equation}\label{eqn:Theta}
  \Theta_n \leq C_2 \varepsilon^{1/m} e^{-\sigma (n_0-n)}.
\end{equation}

As a consequence of \eqref{eqn:iteration} and \eqref{eqn:Theta}, we have
\begin{lem}\label{lem:good}
  Let $\set{F_n,\Theta_n}_{n=0}^{n_0}$ be some non-negative numbers satisfying \eqref{eqn:Theta} for $n=0,1,\cdots,n_0$ and \eqref{eqn:iteration} for $n=1,\cdots,n_0-1$. If $F_n$ is non-decreasing with respect to $n$, then there exist constants $\tilde{\sigma}>0$ and $C'>0$ such that
  \begin{equation*}
    F_n\leq C' \varepsilon^{1/m} e^{ -\tilde{\sigma} (n_0-n)}, \qquad \forall n=0,1,\cdots,n_0.
  \end{equation*}
\end{lem}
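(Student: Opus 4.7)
The plan is to convert the discrete inequality \eqref{eqn:iteration} into a strict contraction by exploiting the monotonicity of $F_n$, iterate it with step~$2$, and then dominate the resulting residual sum against the exponentially decaying $\Theta_n$. First I would use $F_{n-1} \le F_n$ in \eqref{eqn:iteration} to obtain
$$(1+C_1) F_{n-1} \;\le\; F_n + C_1 F_{n-1} \;\le\; \Theta_n + C_1 F_{n+1},$$
which, with $\beta := C_1/(1+C_1) \in (0,1)$, becomes the two-step contraction
$$F_{n-1} \;\le\; \beta\, F_{n+1} + (1-\beta)\,\Theta_n, \qquad 1 \le n \le n_0-1.$$

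Iterating this $k$ times gives
$$F_{n-1} \;\le\; \beta^k F_{n+2k-1} + (1-\beta)\sum_{j=0}^{k-1} \beta^j\, \Theta_{n+2j},$$
valid whenever $n + 2k - 1 \le n_0$. For any $0 \le m \le n_0$ I would set $n = m+1$ and $k = \lfloor (n_0-m)/2 \rfloor$, so that $F_{n+2k-1} \le F_{n_0}$ by monotonicity (any parity off-by-one being absorbed into the constant by one more application of $F_{n-1} \le F_n$). Substituting \eqref{eqn:Theta} gives
$$\sum_{j=0}^{k-1}\beta^j\,\Theta_{n+2j} \;\le\; C_2\,\varepsilon^{1/m}\, e^{-\sigma(n_0-m-1)}\sum_{j=0}^{k-1}(\beta e^{2\sigma})^j.$$
A case split on whether $\beta e^{2\sigma} < 1$ (so the geometric sum is $O(1)$, producing the exponential rate $\sigma$) or $\beta e^{2\sigma} \ge 1$ (so the sum is dominated by its last term $(\beta e^{2\sigma})^{k-1}$, producing the rate $-\tfrac{1}{2}\ln\beta$) bounds the right-hand side by $C\,\varepsilon^{1/m}\, e^{-\tilde\sigma(n_0-m)}$, with $\tilde\sigma := \min\{\sigma,\, -\tfrac{1}{2}\ln\beta\} > 0$.

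The step that demands extra attention is the \emph{boundary term} $\beta^k F_{n_0}$: the hypotheses listed in the lemma do not by themselves provide an a priori bound on $F_{n_0}$. However, in the application within \autoref{sub:pohozaev} this is supplied externally by \eqref{eqn:totalsmall}, which bounds the integral of the radial derivatives over the entire neck by $C\varepsilon^{1/m}$, so that $F_{n_0} \le C\varepsilon^{1/m}$. With this input, $\beta^k F_{n_0} \le C\varepsilon^{1/m} e^{-\tilde\sigma(n_0-m)}$ already matches the target rate, and combining it with the residual estimate yields $F_n \le C'\varepsilon^{1/m} e^{-\tilde\sigma(n_0-n)}$ for every $n = 0, 1, \ldots, n_0$.
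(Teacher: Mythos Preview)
Your argument is correct and matches the paper's: convert \eqref{eqn:iteration} into a step-$2$ contraction via the monotonicity $F_{n-1}\le F_n$, iterate toward $n_0$, and handle the boundary term using \eqref{eqn:totalsmall}---the paper's own proof invokes \eqref{eqn:totalsmall} at exactly this point, so your remark that the stated hypotheses alone do not bound $F_{n_0}$ is on the mark. The only cosmetic difference is that the paper first separates the indices where $\Theta_n$ dominates (bounded directly by \eqref{eqn:Theta}) from those where the contraction term dominates (yielding a pure geometric iteration with factor $\tfrac{2C_1}{2C_1+1}$), whereas you carry the $\Theta$-residual through the iteration and split cases on $\beta e^{2\sigma}$ at the end; both routes produce $\tilde\sigma=\min\{\sigma,\,-\tfrac12\ln(\text{contraction factor})\}$.
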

\begin{proof}
  Define 
  \begin{equation*}
    \mathcal{B}= \set{n=1,\cdots, n_0-1|\, C_1(F_{n+1}-F_{n-1})>\Theta_n } \quad \text{and} \quad \mathcal{B}'= \set{1,\cdots,n_0-1}\setminus \mathcal{B}
  \end{equation*}
  for $C_1$ in \eqref{eqn:iteration}.

  If $n\in \mathcal{B}'$, then by \eqref{eqn:iteration} and \eqref{eqn:Theta},
  \begin{equation}\label{eqn:inA}
    F_n\leq 2\Theta_n \leq 2C_2 \varepsilon^{1/m} e^{-\sigma(n_0-n)},
  \end{equation}
  which finishes the proof of the lemma with $C'=2C_2$ and $\tilde\sigma=\sigma$.

  If $n\in \mathcal{B}$, let $n_1$ be the smallest number in $\mathcal{B}'$ which is larger than $n$ (if no such $n_1$ exists, let $n_1$ be $n_0$). If $n_1\ne n_0$, we have
  \begin{equation}\label{eqn:Fn1}
    F_{n_1} \leq 2C_2 \varepsilon^{1/m} e^{-\sigma (n_0-n_1)},
  \end{equation} 
  by \eqref{eqn:inA}. If $n_1=n_0$, \eqref{eqn:Fn1} also holds (for maybe another $C_2$) due to \eqref{eqn:totalsmall}.

  We may assume $n<n_1-1$, otherwise $n=n_1-1$ and the lemma follows from the monotonicity of $F_n$ and \eqref{eqn:Fn1} by letting $C'=2C_2e^\sigma$ and $\tilde\sigma=\sigma$. Let $l$ be the largest odd number such that
  \begin{equation*}
    n+1, n+3, \ldots, n+l < n_1.
  \end{equation*}
  By the definition of $l$ and $n_1$,
  \begin{align} \label{eqn:dl}
    &n+l+1\leq n_1\leq n+l+2\\ 
    \label{eqn:dn1}
    & n+1,n+3,\ldots,n+l \in \mathcal{B}.
  \end{align}
  By \eqref{eqn:dn1} and \eqref{eqn:iteration},
  \begin{eqnarray*}
    F_{n+1}&\leq& 2C_1(F_{n+2}-F_n) \\
    F_{n+3}&\leq& 2C_1(F_{n+4}-F_{n+2})\\
           &\vdots& \\
    F_{n+l}&\leq& 2C_1(F_{n+l+1}-F_{n+l-1}).
  \end{eqnarray*}
  Using the monotonicity of $F_n$, we may replace $F_{n+1}$ by $F_n$, \ldots, $F_{n+l}$ by $F_{n+l-1}$ in the left hand side of the above inequalities and rewrite them into the form
  \begin{eqnarray*}
    F_n  & \leq &\frac{2C_1}{2C_1+1} F_{n+2} \\
    F_{n+2}& \leq &\frac{2C_1}{2C_1+1} F_{n+4}\\
           &\vdots& \\
    F_{n+l-1}&\leq& \frac{2C_1}{2C_1+1} F_{n+l+1}.
  \end{eqnarray*}
  Hence, multiplying all the above inequalities gives
  \begin{equation*}
    F_n\leq \left( \frac{2C_1}{2C_1+1} \right)^{\frac{l+1}{2}} F_{n+l+1}.
  \end{equation*}
  Noticing $n+l+1\leq n_1$ (by \eqref{eqn:dl}) and using \eqref{eqn:Fn1}, we get
  \begin{equation*}
    F_n\leq \left( \frac{2C_1}{2C_1+1} \right)^{\frac{l+1}{2}} 2C_2\varepsilon^{1/m} e^{-\sigma(n_0-n_1)}.
  \end{equation*}
  Setting $\sigma'=-\frac{1}{2}\log \frac{2C_1}{2C_1+1}$ and using $l+1\geq n_1-n-1$ (by \eqref{eqn:dl}), we obtain 
  \begin{equation*}
    F_n \leq 2C_2 e^{-\sigma'} \eps^{1/m} e^{-\sigma'(n_1-n)}e^{-\sigma(n_0-n_1)}.
  \end{equation*}
  The lemma follows by taking $\tilde{\sigma}=\min \set{\sigma, \sigma'}$ and $C'=2C_2 e^{-\sigma'}$.
\end{proof}

Clearly \autoref{lem:good} implies
\begin{equation*}
  F_1 \leq C'e^{\tilde\sigma}\varepsilon^{1/m} \left( e^{-\tilde{\sigma} (\log \delta -t_0)} + e^{-\tilde{\sigma}(t_0-\log (\lambda_i R))} \right),
\end{equation*}
and we finish the proof of \autoref{lem:pohozaev}.
\appendix
\section{\epst-regularity} 
In this section we prove the $\eps$-regularity theorem for (extrinsic) $m$-polyharmonic maps.
\begin{thm}[$\eps$-regularity]\label{thm:regularity}
  Suppose $u\in W^{2m,p}(B,N)$, $p>1$, is an $m$-polyharmonic map from the unit ball $B\subset\RB^{2m}$ to a compact Riemannian manifold $N$ isometrically embedded  into $\RB^K$. There exists $\eps_0>0$ such that if
  \[
    E(u;B)\eqdef\frac{1}{2}\int_{B}\sbr{|\nabla^mu|^2+|\nabla u|^{2m}}\leq\eps_0,
  \]
  then for some constant $C$,
  \begin{align}
    \label{eq:epsreg:wkp}
    \|u-\bar u\|_{W^{2m,p}(B_{1/2})}&\leq C\sbr{\|\nabla^mu\|_{L^2(B)}+\|\nabla u\|_{L^{2m}(B)}},\\
    \intertext{and for $l=1,2,\ldots$,}
    \label{eq:epsreg:Calpha}
    \|\nabla^lu\|_{L^\infty(B_{1/2})}&\leq C(l)\sbr{\|\nabla^mu\|_{L^2(B)}+\|\nabla u\|_{L^{2m}(B)}},
  \end{align}
  where $\bar u$ is the mean value of $u$ over $B\subset\RB^{2m}$ and $C$, $\set{C(l)}$ are constants.
\end{thm}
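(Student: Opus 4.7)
The plan is to follow the strategy of Wang \cite{Wang2004Biharmonic} and Gastel--Scheven \cite{GastelScheven2009Regularity} and turn their qualitative regularity arguments into the quantitative estimates \eqref{eq:epsreg:wkp} and \eqref{eq:epsreg:Calpha}. First I would record the Euler--Lagrange equation in a workable form. Since $N$ is isometrically embedded in $\RB^K$ and the extrinsic Euler--Lagrange equation forces $\Delta^m u$ to lie in the normal bundle of $N$ along $u$, projecting along the second fundamental form gives an identity of the schematic shape
\[
  \Delta^m u = \sum_{j=1}^{2m-1} B_j(u)\bigl(\nabla^j u, \nabla^{2m-j} u\bigr) + \text{l.o.t.},
\]
where each $B_j$ is a smooth bundle map built from the second fundamental form of $N$ and its covariant derivatives, and every term on the right has scaling weight $2m$. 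This matches the weight of $\Delta^m$ exactly, confirming that the equation is critical in dimension $n=2m$.

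Second, I would obtain \eqref{eq:epsreg:wkp} by a standard splitting argument. On $B_{3/4}$ write $u = v + w$ where $w$ is the $m$-polyharmonic function with the same Dirichlet data (including normal derivatives up to order $m-1$) as $u$, and $v = u - w$ vanishes to order $m-1$ on $\partial B_{3/4}$. Interior estimates for $\Delta^m w = 0$ bound every derivative of $w$ on $B_{1/2}$ by $\|u\|_{L^2(B)}$. For $v$, Calder\'on--Zygmund theory gives
\[
  \|\nabla^{2m} v\|_{L^p(B_{3/4})} \leq C \|\Delta^m v\|_{L^p(B_{3/4})},
\]
and each summand in $\Delta^m v$ is a $2m$-weighted product of derivatives of $u$. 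By H\"older and Gagliardo--Nirenberg interpolation the right-hand side is controlled by $E(u;B)^{\alpha}$ (for some $\alpha>0$) multiplied by $\|\nabla^{2m} u\|_{L^p(B_{3/4})} + \|\nabla^m u\|_{L^2(B)} + \|\nabla u\|_{L^{2m}(B)}$. Choosing $\eps_0$ sufficiently small absorbs the $\|\nabla^{2m} u\|_{L^p}$ term on the left and yields \eqref{eq:epsreg:wkp}.

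Third, once $u \in W^{2m,p}(B_{1/2})$ for some $p>1$ is established with the bound \eqref{eq:epsreg:wkp}, Sobolev embedding gives $\nabla^{j} u$ in $L^\infty$ locally for $j \leq 2m-1$. Differentiating the Euler--Lagrange equation and re-applying the same $L^p$ estimate on slightly smaller balls, with improved integrability at each step, delivers uniform bounds on all higher derivatives of $u$, which is \eqref{eq:epsreg:Calpha} for every $l$.

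The main obstacle is the first gain of integrability past the exponent $p=2$. A naive application of the argument in the second paragraph at $p=2$ recovers only $u \in W^{2m,2}$, which is assumed, so the inequality closes circularly. Extracting a strict improvement (either $p>2$ directly, or an $L^{2,1}$-gain at $p=2$) requires more structure in the nonlinearity than H\"older alone. For $m=2$ this is Wang's argument through Hodge decomposition and Lorentz-space duality; for general $m$ the conservation-law framework of Gastel--Scheven plays the analogous role. I expect that once such an initial supercritical improvement is in hand, the bootstrap sketched above proceeds routinely and produces the quantitative constants in \eqref{eq:epsreg:wkp} and \eqref{eq:epsreg:Calpha}.
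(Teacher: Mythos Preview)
Your splitting $u=v+w$ with $w$ polyharmonic is a perfectly reasonable alternative to the paper's cutoff approach, but the ``main obstacle'' you identify in your last paragraph is not actually present, and invoking the Gastel--Scheven conservation-law machinery is unnecessary here. The key point you have overlooked is that the hypothesis already gives $u\in W^{2m,p}$ for some $p>1$; you are not starting from a bare $W^{m,2}$ weak solution. The paper exploits this by running the Calder\'on--Zygmund estimate at a \emph{low} exponent $1<p<\tfrac{2m}{2m-1}$: in the product $a_\alpha(u)\,\nabla^{k_1}u\,\#\cdots\#\,\nabla^{k_a}u$ one places the highest-order factor $\nabla^{k_1}u$ (with $k_1\le 2m-1$) in $L^{2mp/(2m-(2m-k_1)p)}$ via $W^{2m,p}\hookrightarrow W^{k_1,\cdot}$, and every remaining factor $\nabla^{k_i}u$ (with $k_i\le m$) in $L^{2m/k_i}$ via $W^{m,2}\hookrightarrow W^{k_i,2m/k_i}$. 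H\"older then closes exactly in $L^p$, and the resulting bound carries $(a-1)\ge 1$ powers of $\|u\|_{W^{m,2}(B)}$, which is small by the energy hypothesis and \eqref{eq:epsreg_bdd_uwm2}. This gives the absorption directly --- no Lorentz spaces, no divergence structure. The paper packages the interior-vs-boundary issue with the weighted seminorms $\Phi_j=\sup_{1/2\le\sigma<1}(1-\sigma)^j\|\nabla^j u\|_{L^p(B_\sigma)}$ and an interpolation lemma, rather than your $v+w$ decomposition, but either device works.

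Your third paragraph also contains an error: $W^{2m,p}(B_{1/2})$ for $p$ close to $1$ in dimension $2m$ does \emph{not} embed $\nabla^{j}u$ into $L^\infty$ for $j\le 2m-1$; that requires $p>2m$. What is true, and what the paper does, is a bootstrap in $p$: once \eqref{eq:epsreg:wkp} holds at some $p_i>1$, each factor $\nabla^{k_j}u$ lies in $L^{2mp_i/(2m-(2m-k_j)p_i)}$ by Sobolev, and since there are $a\ge 2$ factors the product lands in $L^{p_{i+1}}$ with $\tfrac{1}{p_{i+1}}\le\tfrac{1}{p_i}-\tfrac{1}{2m}$, a strict gain. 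Iterating a finite number of times (with the obvious modification once some $\nabla^{k_j}u$ reach $L^\infty$) carries $p$ past $2m$, after which Morrey embedding and differentiation of the equation give \eqref{eq:epsreg:Calpha}. There is no special barrier at $p=2$.
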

\begin{rmk}
  We will show that the Sobolev norm $\|u\|_{W^{m,2}(B)}$ can be controlled by $E(u;B)$ provided that $\bar u=0$. In fact, since $u\in W^{2m,p}(B)\embedto W^{m,2}(B)$ and $N$ is compact, we know that $u-\bar u\in W^{m,2}(B)$. The interpolation inequality (see \cite{AdamsFournier2003Sobolev}*{Lemma~5.2(1)}) applied to $u-\bar u$ gives
  \[
    \|\nabla^ju\|_{L^2(B)}\leq C\sbr{\|\nabla^mu\|_{L^2(B)}+\|u-\bar u\|_{L^{2}(B)}},\quad\forall 1<j\leq m.
  \]
  Combining it with Poincar\'e inequality
  \[
    \|u-\bar u\|_{L^2(B)}\leq C\|\nabla u\|_{L^2(B)},
  \]
  we obtain
  \begin{equation}\label{eq:epsreg_bdd_uj}
    \begin{split}
      \|\nabla^ju\|_{L^2(B)}&\leq C\sbr{\|\nabla^mu\|_{L^2(B)}+\|\nabla u\|_{L^2(B)}}\\
                            &\leq C\sbr{\|\nabla^mu\|_{L^2(B)}+\|\nabla u\|_{L^{2m}(B)}}
      ,\quad\forall 1<j\leq m.
    \end{split}
  \end{equation}

  If in addition, we assume that $\bar u=0$, then the Poincar\'e inequality reads
  \[
    \|u\|_{L^2(B)}\leq C\|\nabla u\|_{L^2(B)},
  \]
  and 
  \[
    \|u\|_{W^{m,2}(B)}^2=\sum_{j=0}^{m}\|\nabla^ju\|_{L^2(B)}^2\leq C\sum_{j=1}^m\|\nabla^ju\|_{L^2(B)}^2.
  \]
  Since \eqref{eq:epsreg_bdd_uj} holds trivially for $j=1$, we conclude that 
  \begin{equation}\label{eq:epsreg_bdd_uwm2}
    \|u\|_{W^{m,2}(B)}\leq C\sbr{\|\nabla^mu\|_{L^2(B)}+\|\nabla u\|_{L^{2m}(B)}},
  \end{equation}
  provided that $\bar u=0$.
\end{rmk}
\begin{proof}
  Without loss of generality, we may assume that $\bar u=0$. Since $u$ is an (extrinsic) $m$-polyharmonic map, it satisfies the following Euler-Lagrange equation (see~\cite{AngelsbergPumberger2009regularity}*{Lemma~2.2, (4)})
  \begin{equation}
    \Delta^mu=\sum_{\substack{i,j,q\geq0\\i+j+q=m-1\\(i,j,q)\neq(0,m-1,0)}}c_{ijq}^{m-1}\Delta^i\nabla^q(P(u))\Delta^{j+1}\nabla^qu
    -\Delta^{m-1}(A(u)(\nabla u,\nabla u)),
    \label{eq:extrinsic_EL}
  \end{equation}
  where $c_{ijq}^m$ are positive integers, $P(u)=D\Pi(u)$ is the orthonormal projection defined by the differential of the nearest projection map $\Pi:N_\delta\to N$, $N_\delta\subset\RB^K$ is a neighborhood of $N$ and $A(u)$ is the second fundamental form of $N\embedto\RB^K$. 

  For a multi-index $\alpha=(k_1,k_2,\ldots, k_a)$, where $k_i\geq1$ are integers for $i=1,2,\ldots,a$, the \emph{norm} of $\alpha$ is defined to be $|\alpha|\eqdef\sum_{i=1}^ak_i$ and $a$ is called the \emph{length} of $\alpha$. We can rewrite \eqref{eq:extrinsic_EL} as
  \begin{equation}\label{eq:EL}
    \Delta^mu=\sum_{\stackrel{|\alpha|=2m}{\alpha\neq(2m)}}a_\alpha(u)\nabla^{k_1}u\scomp\nabla^{k_2}u\scomp\cdots\scomp\nabla^{k_a}u,
  \end{equation}
  where $a_\alpha$ are smooth functions on $N$. Moreover, we may assume that $k_1\geq k_2\geq\cdots k_a\geq1$ in \eqref{eq:EL}. In particular, $1\leq k_1\leq 2m-1$ and $k_2\leq m$.

  Now, for fixed $\sigma\in(1/2,1)$, let $2\sigma'=1+\sigma$ and $\phi\in C_0^\infty(B_{\sigma'})$ be a cutoff function satisfying 
  \[
    \phi|_{B_\sigma}\equiv1\quad \text{and} \quad|\nabla^k\phi|\leq 4^k(1-\sigma)^{-k},\qquad  k=0,1,2,\cdots.
  \]
  In order to apply $L^p$-estimates to $\phi u$, we compute the equation of $\phi u$ as follows
  \begin{align*}
    &\Delta^m(\phi u)-\sum_{i=1}^{2m}\nabla^i\phi\scomp\nabla^{2m-i}u
    =\phi\Delta^mu=\sum_{\stackrel{|\alpha|=2m}{\alpha\neq(2m)}}\phi a_\alpha(u) \nabla^{k_1}u\scomp\nabla^{k_2}u\scomp\cdots\scomp\nabla^{k_a}u
    \\
    &\qquad=\sum_{\stackrel{|\alpha|=2m}{\alpha\neq(2m)}}a_\alpha(u)\mbr{\driv[(\phi u)]{k_1}-\sum_{j=1}^{k_1}\binom{k_1}{j}\driv[\phi]{j}\driv{k_1-j}}
    \scomp\cdots\scomp\driv{k_a}\\	
    &\qquad=\sum_{\stackrel{|\alpha|=2m}{\alpha\neq(2m)}}a_\alpha(u)\driv[(\phi u)]{k_1}\scomp\cdots\scomp\driv{k_a}+\sum_{\stackrel{|\alpha|=2m}{\alpha\neq(2m)}}\sum_{j=1}^{k_1}a_\alpha(u)\driv[\phi]{j}\scomp\driv{k_1-j}\scomp\cdots\scomp\driv{k_a}.
  \end{align*}
  Thus
  \begin{equation}
    \label{eq:EL_phiu}
    \begin{split}
      \Delta^m(\phi u)&=\sum_{i=1}^{2m}\nabla^i\phi\scomp\nabla^{2m-i}u+\sum_{\stackrel{|\alpha|=2m}{\alpha\neq(2m)}}a_\alpha(u)\driv[(\phi u)]{k_1}\scomp\cdots\scomp\driv{k_a}\\
                      &\qquad+\sum_{\stackrel{|\alpha|=2m}{\alpha\neq(2m)}}\sum_{j=1}^{k_1}a_\alpha(u)\driv[\phi]{j}\scomp\driv{k_1-j}\scomp\cdots\scomp\driv{k_a}.
    \end{split}
  \end{equation}

  Next, for $1<p<\frac{2m}{2m-1}$, we estimate the $L^p(B)$ norm of the last three terms in \eqref{eq:EL_phiu} one by one. For the first term, by the definition of $\phi$,
  \[
    \|\nabla^i\phi\scomp\nabla^{2m-i}u\|_{L^p(B)}\leq C(1-\sigma)^{-i}\|\driv{2m-i}\|_{L^p(B_{\sigma'})},\quad 1\leq i\leq 2m.
  \]
  For the second term, by generalized H\"older inequality, 
  \begin{align} \label{eqn:before1}
    \|\driv[(\phi u)]{k_1}\scomp\cdots\scomp\driv{k_a}\|_{L^p(B)}&\leq C\|\driv[(\phi u)]{k_1}\|_{L^{\frac{2mp}{2m-(2m-k_1)p}}(B)}\prod_{i=2}^a\|\driv{k_i}\|_{L^{\frac{2m}{k_i}}(B)}\\ \nonumber
                                                                 &\leq C\|\phi u\|_{W^{2m,p}(B)}\prod_{i=2}^a\|u\|_{W^{m,2}(B)},
  \end{align}
  where in the last inequality, we have used the following Sobolev embedding
  \[
    W^{2m,p}\embedto W^{k_1,2mp/(2m-(2m-k_1)p)}\quad \text{and} \quad W^{m,2}\embedto W^{k,2m/k},\quad1\leq k\leq m.
  \]

  The estimate of the last term is obtained by different methods for different values of $k_1$ and $j$.
  \begin{itemize}
    \item If $j=k_1$, exploiting the boundedness of $u$ and by Sobolev inequalities (see \eqref{eqn:before1}), we obtain 
      \begin{align*}
        &\|\driv[\phi]{j}\scomp\driv{k_1-j}\scomp\driv{k_2}\scomp\cdots\scomp\driv{k_a}\|_{L^p(B)}\\
        &\qquad\leq C(1-\sigma)^{-k_1}\|u\scomp\driv{k_2}\scomp\cdots\scomp\driv{k_a}\|_{L^p(B)}\\
        &\qquad\leq C(1-\sigma)^{-k_1}\|u\|_{W^{m,2}(B)}^{a-1}.
      \end{align*}
    \item If $\max\set{1,k_1-j}\leq j<k_1(<2m)$, then by the definition of $\phi$, 
      \begin{align*}
        &\|\driv[\phi]{j}\scomp\driv{k_1-j}\scomp\driv{k_2}\scomp\cdots\scomp\driv{k_a}\|_{L^{p}(B)}\\
        &\qquad\leq C(1-\sigma)^{-j}\|\driv{k_1-j}\|_{L^{\frac{2m}{k_1-j}}(B_{\sigma'})}\prod_{i=2}^a\|\driv{k_i}\|_{L^{\frac{2m}{k_i}}(B_{\sigma'})}\\
        &\qquad\leq C(1-\sigma)^{-j}\|u\|_{W^{m,2}(B)}^{a}.
      \end{align*}
    \item
      If $1\leq j<k_1-m$, then 
      \begin{align*}
        &\|\driv[\phi]{j}\scomp\driv{k_1-j}\scomp\driv{k_2}\scomp\cdots\scomp\driv{k_a}\|_{L^{p}(B)}\\
        &\qquad\leq C(1-\sigma)^{-j}\|\nabla^{k_1-j}u\|_{L^{\frac{2mp}{2m-(2m-k_1)p}}(B_{\sigma'})}\prod_{i=2}^a\|\nabla^{k_i}u\|_{L^{\frac{2m}{k_i}}(B_{\sigma'})}\\
        &\qquad\leq C(1-\sigma)^{-j}\|u\|_{W^{2m-j,p}(B_{\sigma'})}\|u\|_{W^{m,2}(B)}^{a-1}.
      \end{align*}
      By the interpolation inequality (see~\cite{AdamsFournier2003Sobolev}*{Thm.~5.2(1)})
      \[
        \|u\|_{W^{2m-j,p}(B_{\sigma'})}\leq C\sbr{\|\driv{2m-j}\|_{L^{p}(B_{\sigma'})}+\|u\|_{L^p(B_{\sigma'})}},
      \]
      and the boundedness of $\|u\|_{L^p(B)}$, we conclude that
      \begin{multline*}
        \hspace{3em}\|\driv[\phi]{j}\scomp\driv{k_1-j}\scomp\driv{k_2}\scomp\cdots\scomp\driv{k_a}\|_{L^{p}(B)}\\
        \leq C(1-\sigma)^{-j}\|\driv{2m-j}\|_{L^p(B_{\sigma'})}\|u\|_{W^{m,2}(B)}^{a-1}+C(1-\sigma)^{-j}\|u\|_{W^{m,2}(B)}^{a-1}.
      \end{multline*}
  \end{itemize}

  With the $L^p$ bound of the right hand side of \eqref{eq:EL_phiu} obtained as above, we can apply the $L^p$ estimate to \eqref{eq:EL_phiu} to conclude
  \begin{equation}
    \begin{split}
      \|\driv[(\phi u)]{2m}\|_{L^p(B)}&\leq C\sum_{\stackrel{|\alpha|=2m}{\alpha\neq(2m)}}\Bigg\{
        \sum_{j=\max\set{k_1-m,1}}^{k_1}(1-\sigma)^{-j}\|u\|_{W^{m,2}(B)}^{a-1}\\
        &\hspace{3cm}+\sum_{j=1}^{k_1-m-1}(1-\sigma)^{-j}\|u\|_{W^{m,2}(B)}^{a-1}\\
        &\hspace{3cm}+\sum_{j=1}^{k_1-m-1}(1-\sigma)^{-j}\|\driv{2m-j}\|_{L^p(B_{\sigma'})}\|u\|_{W^{m,2}(B)}^{a-1}
      \Bigg\}\\
      &\qquad+C\sum_{i=1}^{2m}(1-\sigma)^{-i}\|\driv{2m-i}\|_{L^p(B_{\sigma'})}.
    \end{split}
    \label{eqn:split}
  \end{equation}
  Here we have used the small energy condition in \autoref{thm:regularity}, which by \eqref{eq:epsreg_bdd_uwm2} implies that $||u||_{W^{m,2}(B)}$ is small so that the second term in \eqref{eq:EL_phiu} are absorbed into the left hand side.

  \eqref{eqn:split} can be further simplified as
  \begin{equation}
    \|\driv{2m}\|_{L^p(B_{\sigma})}\leq C\sum_{\stackrel{|\alpha|=2m}{\alpha\neq(2m)}}\sum_{j=1}^{2m}\bigg\{
      \|u\|_{W^{m,2}(B)}^{a-1}
      +\sbr{1+\|u\|_{W^{m,2}(B)}^{a-1}}\|\driv{2m-j}\|_{L^p(B_{\sigma'})}
    \bigg\}(1-\sigma)^{-j}.
    \label{eqn:xiaqu}
  \end{equation}

  By setting
  \[
    \Phi_j\eqdef\sup_{1/2\leq\sigma\leq1}(1-\sigma)^j\|\driv{j}\|_{L^p(B_\sigma)}, \quad j=0,1,\ldots,2m,
  \]
  and noting that $1-\sigma=2(1-\sigma')$, \eqref{eqn:xiaqu} implies
  \begin{equation}\label{eq:eps_regularity:Phi}
    \Phi_{2m}\leq C\sum_{\stackrel{|\alpha|=2m}{\alpha\neq(2m)}}\sum_{l=0}^{2m-1}\bigg\{
      \|u\|_{W^{m,2}(B)}^{a-1}
    +\sbr{1+\|u\|_{W^{m,2}(B)}^{a-1}}\Phi_l\bigg\}.
  \end{equation}
  Now, we need the following 
  \begin{claim}[c.f.~\cite{GilbargTrudinger2001Elliptic}*{Thm.~7.27, p.~171ff.}]
    There exists some constant $C_0$, such that for any $\bar\eps\leq C_0$,
    \[
      \Phi_j\leq\bar\eps\Phi_{2m}+C\bar\eps^{-\frac{j}{2m-j}}\Phi_0,\quad\forall 0\leq j\leq 2m-1,
    \]
    holds for some constant $C$ depending on $m$, $j$ and $C_0$. 
  \end{claim}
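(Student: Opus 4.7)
The plan is to derive this weighted interpolation from the standard Gagliardo--Nirenberg interpolation on a fixed ball, together with a scaling and covering argument of the kind used for interior Schauder estimates in \cite{GilbargTrudinger2001Elliptic}. The starting point is the classical inequality: for any $0 \leq j < 2m$, any $\tilde\eps > 0$, and any $v \in W^{2m,p}(B_1)$,
\[
  \|\nabla^j v\|_{L^p(B_1)} \leq \tilde\eps \|\nabla^{2m} v\|_{L^p(B_1)} + C \tilde\eps^{-j/(2m-j)} \|v\|_{L^p(B_1)}.
\]

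Given $\sigma \in [1/2,1)$, set $r = (1-\sigma)/2$ and $\sigma' = (1+\sigma)/2$, so that $1 - \sigma' = r$ and every ball $B_r(x)$ with $x \in B_\sigma$ lies inside $B_{\sigma'}$. Applying the displayed inequality to $v(y) = u(x + ry)$ yields, after unscaling,
\[
  \|\nabla^j u\|_{L^p(B_r(x))} \leq \tilde\eps\, r^{2m-j} \|\nabla^{2m} u\|_{L^p(B_r(x))} + C \tilde\eps^{-j/(2m-j)} r^{-j} \|u\|_{L^p(B_r(x))}.
\]
Covering $B_\sigma$ by finitely many such balls with controlled overlap (the overlap constant depending only on the dimension), raising to the $p$-th power, summing, and taking roots yields
\[
  \|\nabla^j u\|_{L^p(B_\sigma)} \leq C \tilde\eps\, r^{2m-j} \|\nabla^{2m} u\|_{L^p(B_{\sigma'})} + C \tilde\eps^{-j/(2m-j)} r^{-j} \|u\|_{L^p(B_{\sigma'})}.
\]

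Multiplying through by $(1-\sigma)^j = (2r)^j$ and using the key identity
\[
  (1-\sigma)^j \cdot r^{2m-j} = 2^j\, r^{2m} = 2^j (1-\sigma')^{2m},
\]
the first term on the right becomes $C \cdot 2^j \tilde\eps\, (1-\sigma')^{2m} \|\nabla^{2m} u\|_{L^p(B_{\sigma'})} \leq C \cdot 2^j \tilde\eps\, \Phi_{2m}$, while the second simplifies to $C \tilde\eps^{-j/(2m-j)} \|u\|_{L^p(B_{\sigma'})} \leq C \tilde\eps^{-j/(2m-j)} \Phi_0$. Taking the supremum over $\sigma \in [1/2,1)$ on the left and relabelling $\bar\eps = C \cdot 2^j \tilde\eps$ gives the asserted bound; the hypothesis $\bar\eps \leq C_0$ merely reflects the small admissible range for $\tilde\eps$ in the underlying interpolation. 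I do not foresee any real obstacle: the only subtle point is the arithmetic that balances the weight $(1-\sigma)^j$ on the left against the factor $r^{2m-j}$ produced by scaling on the right, which works out precisely because $1 - \sigma' = r$.
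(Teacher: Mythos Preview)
Your argument is correct. The paper takes a slightly different and shorter route: rather than scaling down to small balls $B_r(x)$ and covering, it fixes $s>0$, picks $\sigma_s\in[1/2,1)$ with $\Phi_j\leq (1-\sigma_s)^j\|\nabla^j u\|_{L^p(B_{\sigma_s})}+s$, and applies the interpolation inequality \emph{directly} on the single domain $B_{\sigma_s}$ (using that the interpolation constant from \cite{AdamsFournier2003Sobolev} is uniform for balls of radius in $[1/2,1)$). The weight $(1-\sigma_s)^j$ is then absorbed by the substitution $\eps_s=C_0\eps(1-\sigma_s)^{j-2m}$, after which $\eps$ is chosen so that $\eps_s=\bar\eps$, and one lets $s\to 0$. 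Your scaling-and-covering approach is the classical alternative used for weighted interior norms in \cite{GilbargTrudinger2001Elliptic}; it has the small advantage that interpolation is only invoked on the fixed ball $B_1$, so no uniformity of constants over varying domains is needed, at the price of the extra covering step and the bounded-overlap bookkeeping.
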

  In fact, by the definition of $\Phi_j$, for any $s>0$, there exists $\sigma_s\in[1/2,1)$, such that
  \[
    \Phi_j\leq(1-\sigma_s)^j\|\nabla^ju\|_{L^p(B_{\sigma_s})}+s.
  \]
  For any $u\in W^{2m,p}(\Omega)$, by interpolation inequality (see \cite{AdamsFournier2003Sobolev}*{Theorem~5.2(1)}), there exists a constant $C_0>0$, such that, for all $\eps\leq 1$ and any $j=0,1,\ldots,2m-1$,
  \[
    \|\nabla^ju\|_{L^p(\Omega)}\leq C_0\sbr{\eps\|\nabla^{2m}u\|_{L^p(\Omega)}+\eps^{-\frac{j}{2m-j}}\|u\|_{L^p(\Omega)}}.
  \]

  Thus 
  \begin{align*}
    \Phi_j&\leq C_0\sbr{\eps(1-\sigma_s)^j\|\nabla^{2m}u\|_{L^p(B_{\sigma_s})}+\eps^{-\frac{j}{2m-j}}(1-\sigma_s)^j\|u\|_{L^p(B_{\sigma_s})}}+s\\
          &\leq\eps_s\Phi_{2m}+C_0^{1+\frac{j}{2m-j}}(\eps_s)^{-\frac{j}{2m-j}}\Phi_0+s,
  \end{align*}
  where $\eps_s=C_0\eps(1-\sigma_s)^{j-2m}$. 

  Lastly, for any $\bar\eps\leq C_0$ and any fixed $s>0$ one can take $\eps=\bar\eps(1-\sigma_s)^{2m-j}/C_0\leq1$, then $\eps_s=\bar\eps$ and the claim follows by taking $s\to0$.

  Note that \eqref{eq:epsreg_bdd_uwm2} implies that $\|u\|_{W^{m,2}(B)}$ are bounded by $E(u;B)\leq\eps_0$, which is small by assumption. Applying the claim to \eqref{eq:eps_regularity:Phi} with small $\bar\eps$ yields
  \begin{equation}\label{eqn:chazhi}
    \Phi_{2m}\leq C\sbr{\Phi_0+\|u\|_{W^{m,2}(B)}^{a-1}}\leq C\sbr{\|\nabla^mu\|_{L^2(B)}+\|\nabla u\|_{L^{2m}(B)}},
  \end{equation}
  where the last inequality follows from the assumption $\bar u=0$, the smallness of $\eps_0$ and \eqref{eq:epsreg_bdd_uwm2}. By \eqref{eqn:chazhi} and the interpolation inequality again, we conclude that, for $p=\frac{2m+1}{2m}\in\sbr{1,\frac{2m}{2m-1}}$, 
  \begin{equation}\label{eq:epsreg_lp}
    \|u\|_{W^{2m,p}(B_\sigma)}\leq C(1-\sigma)^{-2m}\sbr{\|\nabla^mu\|_{L^2(B)}+\|\nabla u\|_{L^{2m}(B)}}.
  \end{equation}

  We will finish the proof by a bootstrap argument. Let us start with $p_0=\frac{2m+1}{2m}<\frac{2m}{2m-1}$. \eqref{eq:epsreg_lp} implies that \eqref{eq:epsreg:wkp} holds for $p=p_0$.

  By the Sobolev embedding $W^{2m,p_0}\embedto W^{l,\frac{2mp_0}{2m-(2m-l)p_0}}$ and the H\"older inequality,
  \begin{align*}
    \|\driv{k_1}\scomp\driv{k_2}\scomp\cdots\scomp\driv{k_a}\|_{L^{\frac{1}{1-a(1-1/p_0)}}}
    &\leq C\prod_{i=1}^a\|\driv{k_i}\|_{L^{\frac{2mp_0}{2m-(2m-k_i)p_0}}}\\
    &\leq C\|u\|_{W^{2m,p_0}}^a.
  \end{align*}
  Since $a\geq2$, we know that 
  \begin{equation}\label{eq:p1}
    \frac{1}{1-a\left(1-\frac{1}{p_0}\right)}\geq \frac{p_0}{2-p_0}=\frac{2m+1}{2m-1}>p_1\eqdef \frac{2m}{2m-1},
  \end{equation}
  thus
  \begin{equation}\label{eq:RHSlp}
    \|\driv{k_1}\scomp\driv{k_2}\scomp\cdots\scomp\driv{k_a}\|_{L^{p_1}}
    \leq C \|u\|_{W^{2m,p_0}}^a
  \end{equation}
  In summary, $L^{p_1}$ norm of the right hand side of \eqref{eq:EL} is bounded by $\|u\|_{W^{2m,p_0}}$, which implies that \eqref{eq:epsreg:wkp} holds for $p_1$. 

  Next we do the iteration 
  \[
    p_{i+1}=\frac{1}{\frac{1}{p_i}-\frac{1}{2m}}=\frac{2m}{2m-i-1},\quad i=1,2,\ldots,2m-2,
  \]
  and show that \eqref{eq:RHSlp} holds with $p_0$ and $p_1$ replaced by $p_i$ and $p_{i+1}$ respectively. Thus the $L^p$-estimate implies that \eqref{eq:epsreg:wkp} holds for $p_{i+1}$. 

  Note that \eqref{eq:RHSlp} holds for $p_1+\delta_1$, where $\delta_1$ is sufficiently small (see~\eqref{eq:p1}). For technical reasons, we will prove the following: Suppose for all $\alpha$ with $|\alpha|=2m\xtext{and}\alpha\neq(2m)$,
  \begin{equation}\label{eq:epsreg:lp'}
    \|\driv{k_1}\scomp\driv{k_2}\scomp\cdots\scomp\driv{k_a}\|_{L^{p_{i+1}+\delta_{i+1}}}\leq C\|u\|_{W^{2m,p_i+\delta_i}}^a,
  \end{equation}
  holds for $i=l\in\set{1,2,\ldots,2m-3}$ and some $\delta_i>0$, we will show that it also holds for $i=l+1$ and some $\delta_{i+1}$. The exact value of $\set{\delta_i}$ does not matter and we only require that they are positive such that \eqref{eq:w2minfty} holds. Choose $\delta_{l}$ sufficiently small, such that
  \begin{align}\label{eq:w2minfty}
    W^{2m,\bar p_l}&\embedto W^{h,\infty},\quad\forall h=0,1,\ldots,l,\\
    \label{eq:w2mpl}
    W^{2m,\bar p_l}&\embedto W^{h,\frac{2m\bar p_l}{2m-(2m-l)\bar p_l}},\quad\forall h=l+1,\ldots,2m,
  \end{align}
  where $\bar p_l=p_l+\delta_l$. To illustrate the idea, let us assume that the multi-indices $\alpha$ in \eqref{eq:epsreg:lp'} satisfy
  \begin{align*}
    k_1\geq k_2\geq\cdots\geq k_{a-j_1-j_2\cdots -j_l}&>k_{a-j_1-j_2\cdots-j_l+1}=\cdots=k_{a-j_1-j_2\cdots-j_{l-1}}=l\\
                                                      &>\cdots\\
                                                      &>k_{a-j_1-j_2+1}=\cdots =k_{a-j_1}=2\\
                                                      &>k_{a-j_1+1}=\cdots =k_{a}=1.
  \end{align*}
  Setting $j=j_1+j_2\cdots+j_{l}$, by H\"older and Sobolev inequality
  \begin{align*}
    &\|\driv{k_1}\scomp\cdots\scomp\driv{k_a}\|_{L^p}\\
    &\qquad\leq C\|\driv{k_1}\scomp\cdots\scomp\driv{k_{a-j}}\|_{L^p}
    \prod_{h=1}^{j_{l}}\|\driv{k_{a-j+h}}\|_{L^\infty}\cdots
    \prod_{h=1}^{j_1}\|\driv{k_{a-j_1+h}}\|_{L^\infty}\\
    &\qquad\leq C\|u\|_{W^{2m, \bar p_l}}^{j}
    \prod_{h=1}^{a-j}\|\nabla^{k_h}u\|_{L^{\frac{2m\bar p_l}{2m-(2m-k_h)\bar p_l}}},\quad \text{(by \eqref{eq:w2minfty})}\\
    &\qquad\leq C\|u\|_{W^{2m,\bar p_l}}^a,\quad \text{(by \eqref{eq:w2mpl})}
  \end{align*}
  where
  \[
    p\leq\frac{1}{1-(a-j)\left( 1-\frac{1}{\bar p_l} \right)-\frac{j_1+2j_2+\cdots+lj_l}{2m}}
  \]
  which attains its minimum at $j_l=\cdots=j_2=0$, $j_1=1$, $a=2$, i.e.,
  \[
    p\geq \frac{1}{\frac{1}{\bar p_l}-\frac{1}{2m}}\eqdef\bar p_{l+1}>p_{l+1}.
  \]
  Therefore \eqref{eq:epsreg_lp} holds for $i=l+1$.

  As a conclusion of the above iteration, we have shown that \eqref{eq:epsreg:wkp} holds for some $\bar p>2m=p_{2m-1}$. Now for general $p>1$, note that 
  \[
    W^{2m,\bar p}\embedto W^{h,\infty},\quad\forall h=0,1,\ldots,2m-1.
  \]
  Thus
  \begin{align*}
    \|\driv{k_1}\scomp\driv{k_2}\scomp\cdots\scomp\driv{k_a}\|_{L^p}
    &\leq C\|\driv{k_1}\scomp\driv{k_2}\scomp\cdots\scomp\driv{k_a}\|_{L^\infty}
    \leq C\|u\|_{W^{2m,\bar p}}^a,
  \end{align*}
  which implies that \eqref{eq:epsreg:wkp} holds for $p$ and we finish the first part of \autoref{thm:regularity}.

  Finally, \eqref{eq:epsreg:Calpha} follows from \eqref{eq:epsreg:wkp} and the standard bootstrap argument.
\end{proof}
\section{Linearized polyharmonic map equation and some higher order estimates}
\label{sec:high}
In \autoref{sec:3circle} and \autoref{sec:pohozaev}, by using either the three circle lemma (\autoref{lem:3circle}) or the Pohozaev type argument, we have proved the decay of the $L^2$ norm of some derivative of $u_i$ ($X_k u_i$ and $\partial_t u_i$ respectively) along the neck. In this section, we provide a lemma which improves the decay of $L^2$ norm to the decay of pointwise higher order norm. The key to the proof is the observation that $X_k u_i$ and $\partial_t u_i=\rho\partial_\rho u_i$ satisfy a homogeneous linear elliptic system with nice coefficients.

Let $u$ be a smooth polyharmonic map defined on $B_4\setminus B_1$. Recall the Euler-Lagrange equation reads
\begin{equation*}
  \Delta^m u = \sum_{\stackrel{|\alpha|=2m}{\alpha\neq(2m)}} a_\alpha(u) \nabla^{k_1} u \# \cdots \# \nabla^{k_a}u.
\end{equation*}
Moreover, we assume that 
\begin{equation}\label{eqn:DT1}
  \max_{B_4\setminus B_1} \abs{\nabla^l u} \leq 1,\quad\forall l=1,2,\ldots,2m-1,
\end{equation}
which holds for each segment of neck due to the Ding-Tian's reduction and $\eps$-regularity theorem (\autoref{thm:regularity}).

\begin{lem}\label{lem:high}
  Suppose $u$ is a polyharmonic map defined on $B_4\setminus B_1$ satisfying \eqref{eqn:DT1} as above. Then	
  \begin{equation}
    \max_{B_3\setminus B_2} \abs{\nabla^l ( (\rho\partial_\rho) u)}^2 \leq C \int_{B_4\setminus B_1} \abs{\rho\partial_\rho u}^2 dx
    \label{eqn:highradial}
  \end{equation}
  and 
  \begin{equation}
    \max_{B_3\setminus B_2} \abs{\nabla^l ( X_k u)}^2 \leq C \int_{B_4\setminus B_1} \abs{X_k u}^2 dx
    \label{eqn:hightangent}
  \end{equation}
  for $k=1,2,\cdots,m(2m-1)$.
\end{lem}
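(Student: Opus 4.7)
The plan is to show that both $v = (\rho\partial_\rho) u$ and $v = X_k u$ satisfy a \emph{homogeneous} linear elliptic system of order $2m$ of the form
\[
  \Delta^m v = \sum_{j=0}^{2m-1} A_j(x) \cdot \nabla^j v,
\]
whose coefficients $A_j$ are universal polynomials in $u$ and its derivatives up to order $2m-1$. By \eqref{eqn:DT1} each $A_j$ is pointwise bounded on $B_4\setminus B_1$, so the equation is a lower-order linear perturbation of the constant-coefficient operator $\Delta^m$. Once this reduction is in place, \eqref{eqn:highradial} and \eqref{eqn:hightangent} follow from a standard elliptic bootstrap on nested subannuli between $B_4\setminus B_1$ and $B_3\setminus B_2$: interior $L^2$-estimates (with interpolation and absorption of the lower-order terms) upgrade $\|v\|_{L^2}$ to $\|v\|_{W^{2m,2}}$, then to $\|v\|_{W^{4m,2}}$, and so on, and Sobolev embedding converts a high-enough Sobolev norm into the desired pointwise $C^l$ bound. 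The bounds on higher derivatives of the coefficients needed at each successive stage follow by bootstrapping the polyharmonic equation $\Delta^m u = R(u,\nabla u,\ldots,\nabla^{2m-1}u)$ for $u$ itself, seeded by \eqref{eqn:DT1}.

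For the radial case, I would start from the commutator $[\rho\partial_\rho,\partial_l] = -\partial_l$ (direct calculation using $\rho\partial_\rho = x^i\partial_i$), which iterates to $[\rho\partial_\rho,\Delta] = -2\Delta$ and hence $[\rho\partial_\rho,\Delta^m] = -2m\Delta^m$. Therefore
\[
  \Delta^m v = \rho\partial_\rho(\Delta^m u) + 2m\,\Delta^m u = \rho\partial_\rho R + 2m R.
\]
Applying the chain rule and using $\rho\partial_\rho(\nabla^k u) = \nabla^k v - k\,\nabla^k u$ gives
\[
  \rho\partial_\rho R = \partial_u R\cdot v + \sum_k \partial_{\nabla^k u} R \cdot \nabla^k v - \sum_k k\,\partial_{\nabla^k u} R \cdot \nabla^k u.
\]
Every monomial in $R$ has total derivative-weight $\sum_i k_i = 2m$, so the Euler homogeneity identity forces $\sum_k k\,\partial_{\nabla^k u} R \cdot \nabla^k u = 2m R$, which exactly cancels the $+2m R$ contribution and yields the homogeneous equation $\Delta^m v = \partial_u R\cdot v + \sum_k \partial_{\nabla^k u} R \cdot \nabla^k v$.

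For the tangential case, the essential point is that $X_k$ is a Euclidean Killing field: its extension from $S^{2m-1}$ via $\Psi_\rho$ coincides with the rotation of $\RB^{2m}$ generated by the underlying element of $\mathfrak{so}(2m)$, so $[X_k,\Delta^m] = 0$ and $\Delta^m v = X_k R$. Writing $X_k = a_{ij}x^j\partial_i$ with $a$ antisymmetric, the commutator $[X_k,\partial_l] = -a_{il}\partial_i$ yields $X_k(\nabla^k u) = \nabla^k v - \mathfrak{A}(\nabla^k u)$, where $\mathfrak{A}$ denotes the infinitesimal $\mathfrak{so}(2m)$-action rotating the indices of the $k$-tensor $\nabla^k u$. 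The chain rule then gives
\[
  X_k R = \partial_u R\cdot v + \sum_k \partial_{\nabla^k u} R \cdot \nabla^k v - \sum_k \partial_{\nabla^k u} R\cdot \mathfrak{A}(\nabla^k u).
\]
The last sum vanishes identically: because the polyharmonic map equation arises from the rotation-invariant functional $\tfrac12\int|\nabla^m u|^2$, the polynomial $R$ is $SO(2m)$-invariant in its tensor arguments $\nabla^k u$, and $\sum_k \partial_{\nabla^k u} R\cdot \mathfrak{A}(\nabla^k u) = 0$ is precisely the infinitesimal form of that invariance. So $v = X_k u$ also satisfies a homogeneous linear equation of the desired shape.

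The main obstacle --- and essentially the only delicate point --- will be verifying the invariance cancellation in the tangential case, keeping the sign convention of the $\mathfrak{so}(2m)$-action on higher-order tensors consistent with the combinatorial commutators $[X_k,\nabla^k]$. Once the two homogeneous linear equations are in place, the rest is a routine interior elliptic bootstrap on a fixed sequence of shrinking subannuli between $B_4\setminus B_1$ and $B_3\setminus B_2$, with no difficulty beyond careful bookkeeping.
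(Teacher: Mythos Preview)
Your proposal is correct and reaches exactly the same linear system the paper derives, but your route is considerably more laborious than the paper's. The paper observes that both $\rho\partial_\rho$ and $X_k$ generate one-parameter groups $\psi_s$ (dilations and rotations of $\RB^{2m}$) under which the extrinsic polyharmonic map equation is invariant; hence $u_s := u\circ\psi_s$ is again polyharmonic, and differentiating the Euler--Lagrange equation for $u_s$ at $s=0$ immediately yields the homogeneous linear system
\[
  \Delta^m h = D a_\alpha(u)\,\nabla^{k_1}u\#\cdots\#\nabla^{k_a}u\cdot h + a_\alpha(u)\bigl(\nabla^{k_1}h\#\cdots\#\nabla^{k_a}u+\cdots\bigr)
\]
for $h=\tfrac{d}{ds}\big|_{s=0}u_s$, with no commutator bookkeeping at all. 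Your Euler homogeneity cancellation in the radial case and your $SO(2m)$-invariance cancellation in the tangential case are precisely the infinitesimal shadows of this invariance, so the ``main obstacle'' you flag (checking the tensorial $\mathfrak{so}(2m)$-action cancellation) simply disappears in the paper's approach. What your method buys is explicitness: one sees exactly which algebraic identity is responsible for the homogeneity of the linearized equation. What the paper's method buys is economy and robustness---it works verbatim for any equation that is the Euler--Lagrange equation of a dilation- and rotation-invariant functional, without ever writing down a commutator. The subsequent elliptic bootstrap on nested subannuli is identical in both approaches.
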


\begin{proof}
  The proof follows from well known elliptic estimates if we can show that $X_ku$ and $(\rho\partial_\rho) u$ satisfy a nice linear equation so that we can apply linear estimates.

  We make use of the fact that the polyharmonic map equation is invariant under the one parameter group generated by $\rho\partial_\rho$ and $X_k$. More precisely, let $\psi_s$ be such a one parameter group and $u_s= u\circ \psi_s$. If $u$ is $m$-polyharmonic map, then so is $u_s$.
  Therefore, $u_s$ satisfies (c.f.~\eqref{eq:extrinsic_EL})
  \begin{equation*}
    \Delta^m u_s = \sum_{\stackrel{|\alpha|=2m}{\alpha\neq(2m)}} a_\alpha(u_s) \nabla^{k_1} u_s \# \cdots \# \nabla^{k_a} u_s.
  \end{equation*}
  Taking $s$-derivative at $s=0$ and denoting $\frac{du_s}{ds}|_{s=0}$ by $h$ gives
  \begin{eqnarray*}
    \Delta^m h &=& \sum_{\stackrel{|\alpha|=2m}{\alpha\neq(2m)}} D a_\alpha(u) \nabla^{k_1} u \# \cdots \# \nabla^{k_a} u \cdot h \\
               && + \sum_{\stackrel{|\alpha|=2m}{\alpha\neq(2m)}} a_\alpha(u) \left( \nabla^{k_1} h \# \cdots \# \nabla^{k_a} u + \cdots + \nabla^{k_1} u \# \cdots \# \nabla^{k_a}h \right).
  \end{eqnarray*}
  This is a linear elliptic system of $h$, whose coefficients are all good by \eqref{eqn:DT1}.

  Finally, we notice that $h$ can be either $ (\rho\partial_\rho) u$, or $X_ku$ in the above computation, depending on the choice of the one parameter group $\psi_s$.
\end{proof}

\nocite{*}

\begin{bibdiv}
\begin{biblist}

\bib{AdamsFournier2003Sobolev}{book}{
      author={Adams, Robert~A.},
      author={Fournier, John J.~F.},
       title={Sobolev spaces},
     edition={Second},
      series={Pure and Applied Mathematics (Amsterdam)},
   publisher={Elsevier/Academic Press, Amsterdam},
        date={2003},
      volume={140},
        ISBN={0-12-044143-8},
      review={\MR{2424078}},
}

\bib{AngelsbergPumberger2009regularity}{article}{
      author={Angelsberg, Gilles},
      author={Pumberger, David},
       title={A regularity result for polyharmonic maps with higher
  integrability},
        date={2009},
        ISSN={0232-704X},
     journal={Ann. Global Anal. Geom.},
      volume={35},
      number={1},
       pages={63\ndash 81},
         url={http://dx.doi.org/10.1007/s10455-008-9122-z},
      review={\MR{2480664}},
}

\bib{ChangWangYang1999regularity}{article}{
      author={Chang, Sun-Yung~A.},
      author={Wang, Lihe},
      author={Yang, Paul~C.},
       title={A regularity theory of biharmonic maps},
        date={1999},
        ISSN={0010-3640},
     journal={Comm. Pure Appl. Math.},
      volume={52},
      number={9},
       pages={1113\ndash 1137},
  url={http://dx.doi.org/10.1002/(SICI)1097-0312(199909)52:9<1113::AID-CPA4>3.0.CO;2-7},
      review={\MR{1692148}},
}

\bib{DingTian1995Energy}{article}{
      author={Ding, Weiyue},
      author={Tian, Gang},
       title={Energy identity for a class of approximate harmonic maps from
  surfaces},
        date={1995},
        ISSN={1019-8385},
     journal={Comm. Anal. Geom.},
      volume={3},
      number={3-4},
       pages={543\ndash 554},
         url={http://dx.doi.org/10.4310/CAG.1995.v3.n4.a1},
      review={\MR{1371209}},
}

\bib{Gastel2006extrinsic}{article}{
      author={Gastel, Andreas},
       title={The extrinsic polyharmonic map heat flow in the critical
  dimension},
        date={2006},
        ISSN={1615-715X},
     journal={Adv. Geom.},
      volume={6},
      number={4},
       pages={501\ndash 521},
         url={http://dx.doi.org/10.1515/ADVGEOM.2006.031},
      review={\MR{2267035}},
}

\bib{GastelScheven2009Regularity}{article}{
      author={Gastel, Andreas},
      author={Scheven, Christoph},
       title={Regularity of polyharmonic maps in the critical dimension},
        date={2009},
        ISSN={1019-8385},
     journal={Comm. Anal. Geom.},
      volume={17},
      number={2},
       pages={185\ndash 226},
         url={http://dx.doi.org/10.4310/CAG.2009.v17.n2.a2},
      review={\MR{2520907}},
}

\bib{GilbargTrudinger2001Elliptic}{book}{
      author={Gilbarg, David},
      author={Trudinger, Neil~S.},
       title={Elliptic partial differential equations of second order},
      series={Classics in Mathematics},
   publisher={Springer-Verlag, Berlin},
        date={2001},
        ISBN={3-540-41160-7},
        note={Reprint of the 1998 edition},
      review={\MR{1814364}},
}

\bib{GoldsteinStrzeleckiZatorska2009polyharmonic}{article}{
      author={Goldstein, Pawe{\l}},
      author={Strzelecki, Pawe{\l}},
      author={Zatorska-Goldstein, Anna},
       title={On polyharmonic maps into spheres in the critical dimension},
        date={2009},
        ISSN={0294-1449},
     journal={Ann. Inst. H. Poincar\'e Anal. Non Lin\'eaire},
      volume={26},
      number={4},
       pages={1387\ndash 1405},
         url={http://dx.doi.org/10.1016/j.anihpc.2008.10.008},
      review={\MR{2542730}},
}

\bib{Helein1991Regularite}{article}{
      author={H{\'e}lein, Fr{\'e}d{\'e}ric},
       title={R\'egularit\'e des applications faiblement harmoniques entre une
  surface et une vari\'et\'e riemannienne},
        date={1991},
        ISSN={0764-4442},
     journal={C. R. Acad. Sci. Paris S\'er. I Math.},
      volume={312},
      number={8},
       pages={591\ndash 596},
      review={\MR{1101039}},
}

\bib{Helein1991Regularity}{article}{
      author={H{\'e}lein, Fr{\'e}d{\'e}ric},
       title={Regularity of weakly harmonic maps from a surface into a manifold
  with symmetries},
        date={1991},
        ISSN={0025-2611},
     journal={Manuscripta Math.},
      volume={70},
      number={2},
       pages={203\ndash 218},
         url={http://dx.doi.org/10.1007/BF02568371},
      review={\MR{1085633}},
}

\bib{HornungMoser2012Energy}{article}{
      author={Hornung, Peter},
      author={Moser, Roger},
       title={Energy identity for intrinsically biharmonic maps in four
  dimensions},
        date={2012},
        ISSN={2157-5045},
     journal={Anal. PDE},
      volume={5},
      number={1},
       pages={61\ndash 80},
         url={http://dx.doi.org/10.2140/apde.2012.5.61},
      review={\MR{2957551}},
}

\bib{Jost1991dimensional}{book}{
      author={Jost, J{\"u}rgen},
       title={Two-dimensional geometric variational problems},
      series={Pure and Applied Mathematics (New York)},
   publisher={John Wiley \& Sons, Ltd., Chichester},
        date={1991},
        ISBN={0-471-92839-9},
        note={A Wiley-Interscience Publication},
      review={\MR{1100926}},
}

\bib{LammRiviere2008Conservation}{article}{
      author={Lamm, Tobias},
      author={Rivi{\`e}re, Tristan},
       title={Conservation laws for fourth order systems in four dimensions},
        date={2008},
        ISSN={0360-5302},
     journal={Comm. Partial Differential Equations},
      volume={33},
      number={1-3},
       pages={245\ndash 262},
         url={http://dx.doi.org/10.1080/03605300701382381},
      review={\MR{2398228}},
}

\bib{Laurain}{article}{
      author={Laurain, Paul},
      author={Rivi{\`e}re, Tristan},
       title={Energy quantization for biharmonic maps},
        date={2013},
        ISSN={1864-8258},
     journal={Adv. Calc. Var.},
      volume={6},
      number={2},
       pages={191\ndash 216},
         url={http://dx.doi.org/10.1515/acv-2012-0105},
      review={\MR{3043576}},
}

\bib{LW1998}{article}{
      author={Lin, Fanghua},
      author={Wang, Changyou},
       title={Energy identity of harmonic map flows from surfaces at finite
  singular time},
        date={1998},
        ISSN={0944-2669},
     journal={Calc. Var. Partial Differential Equations},
      volume={6},
      number={4},
       pages={369\ndash 380},
         url={http://dx.doi.org/10.1007/s005260050095},
      review={\MR{1624304}},
}

\bib{LY2016}{article}{
      author={Liu, Lei},
      author={Yin, Hao},
       title={Neck analysis for biharmonic maps},
        date={2016},
        ISSN={0025-5874},
     journal={Math. Z.},
      volume={283},
      number={3-4},
       pages={807\ndash 834},
         url={http://dx.doi.org/10.1007/s00209-016-1622-0},
      review={\MR{3519983}},
}

\bib{Moser2010Regularity}{article}{
      author={Moser, Roger},
       title={Regularity of minimizing extrinsic polyharmonic maps in the
  critical dimension},
        date={2010},
        ISSN={0025-2611},
     journal={Manuscripta Math.},
      volume={131},
      number={3-4},
       pages={475\ndash 485},
         url={http://dx.doi.org/10.1007/s00229-010-0331-y},
      review={\MR{2592091}},
}

\bib{Parker1996}{article}{
      author={Parker, Thomas~H.},
       title={Bubble tree convergence for harmonic maps},
        date={1996},
        ISSN={0022-040X},
     journal={J. Differential Geom.},
      volume={44},
      number={3},
       pages={595\ndash 633},
         url={http://projecteuclid.org/euclid.jdg/1214459224},
      review={\MR{1431008}},
}

\bib{Qing1995singularities}{article}{
      author={Qing, Jie},
       title={On singularities of the heat flow for harmonic maps from surfaces
  into spheres},
        date={1995},
        ISSN={1019-8385},
     journal={Comm. Anal. Geom.},
      volume={3},
      number={1-2},
       pages={297\ndash 315},
         url={http://dx.doi.org/10.4310/CAG.1995.v3.n2.a4},
      review={\MR{1362654}},
}

\bib{QT1997}{article}{
      author={Qing, Jie},
      author={Tian, Gang},
       title={Bubbling of the heat flows for harmonic maps from surfaces},
        date={1997},
        ISSN={0010-3640},
     journal={Comm. Pure Appl. Math.},
      volume={50},
      number={4},
       pages={295\ndash 310},
  url={http://dx.doi.org/10.1002/(SICI)1097-0312(199704)50:4<295::AID-CPA1>3.0.CO;2-5},
      review={\MR{1438148}},
}

\bib{Riviere2007Conservation}{article}{
      author={Rivi{\`e}re, Tristan},
       title={Conservation laws for conformally invariant variational
  problems},
        date={2007},
        ISSN={0020-9910},
     journal={Invent. Math.},
      volume={168},
      number={1},
       pages={1\ndash 22},
         url={http://dx.doi.org/10.1007/s00222-006-0023-0},
      review={\MR{2285745}},
}

\bib{SU1981}{article}{
      author={Sacks, J.},
      author={Uhlenbeck, K.},
       title={The existence of minimal immersions of {$2$}-spheres},
        date={1981},
        ISSN={0003-486X},
     journal={Ann. of Math. (2)},
      volume={113},
      number={1},
       pages={1\ndash 24},
         url={http://dx.doi.org/10.2307/1971131},
      review={\MR{604040}},
}

\bib{Struwe1985evolution}{article}{
      author={Struwe, Michael},
       title={On the evolution of harmonic mappings of {R}iemannian surfaces},
        date={1985},
        ISSN={0010-2571},
     journal={Comment. Math. Helv.},
      volume={60},
      number={4},
       pages={558\ndash 581},
         url={http://dx.doi.org/10.1007/BF02567432},
      review={\MR{826871}},
}

\bib{Struwe2008Partial}{article}{
      author={Struwe, Michael},
       title={Partial regularity for biharmonic maps, revisited},
        date={2008},
        ISSN={0944-2669},
     journal={Calc. Var. Partial Differential Equations},
      volume={33},
      number={2},
       pages={249\ndash 262},
         url={http://dx.doi.org/10.1007/s00526-008-0175-4},
      review={\MR{2413109}},
}

\bib{Strzelecki2003biharmonic}{article}{
      author={Strzelecki, Pawe{\l}},
       title={On biharmonic maps and their generalizations},
        date={2003},
        ISSN={0944-2669},
     journal={Calc. Var. Partial Differential Equations},
      volume={18},
      number={4},
       pages={401\ndash 432},
         url={http://dx.doi.org/10.1007/s00526-003-0210-4},
      review={\MR{2020368}},
}

\bib{Wang1996Bubble}{article}{
      author={Wang, Changyou},
       title={Bubble phenomena of certain {P}alais-{S}male sequences from
  surfaces to general targets},
        date={1996},
        ISSN={0362-1588},
     journal={Houston J. Math.},
      volume={22},
      number={3},
       pages={559\ndash 590},
      review={\MR{1417632}},
}

\bib{Wang2004Biharmonic}{article}{
      author={Wang, Changyou},
       title={Biharmonic maps from {$\mathbf{R}^4$} into a {R}iemannian
  manifold},
        date={2004},
        ISSN={0025-5874},
     journal={Math. Z.},
      volume={247},
      number={1},
       pages={65\ndash 87},
         url={http://dx.doi.org/10.1007/s00209-003-0620-1},
      review={\MR{2054520}},
}

\bib{Wang2004Remarks}{article}{
      author={Wang, Changyou},
       title={Remarks on biharmonic maps into spheres},
        date={2004},
        ISSN={0944-2669},
     journal={Calc. Var. Partial Differential Equations},
      volume={21},
      number={3},
       pages={221\ndash 242},
         url={http://dx.doi.org/10.1007/s00526-003-0252-7},
      review={\MR{2094320}},
}

\bib{Wang2004Stationary}{article}{
      author={Wang, Changyou},
       title={Stationary biharmonic maps from {$\mathbb{R}^m$} into a
  {R}iemannian manifold},
        date={2004},
        ISSN={0010-3640},
     journal={Comm. Pure Appl. Math.},
      volume={57},
      number={4},
       pages={419\ndash 444},
         url={http://dx.doi.org/10.1002/cpa.3045},
      review={\MR{2026177}},
}

\bib{WangZheng2012Energy}{article}{
      author={Wang, Changyou},
      author={Zheng, Shenzhou},
       title={Energy identity of approximate biharmonic maps to {R}iemannian
  manifolds and its application},
        date={2012},
        ISSN={0022-1236},
     journal={J. Funct. Anal.},
      volume={263},
      number={4},
       pages={960\ndash 987},
         url={http://dx.doi.org/10.1016/j.jfa.2012.05.008},
      review={\MR{2927401}},
}

\bib{WangZheng2013Energy}{article}{
      author={Wang, Changyou},
      author={Zheng, Shenzhou},
       title={Energy identity for a class of approximate biharmonic maps into
  sphere in dimension four},
        date={2013},
        ISSN={1078-0947},
     journal={Discrete Contin. Dyn. Syst.},
      volume={33},
      number={2},
       pages={861\ndash 878},
         url={http://dx.doi.org/10.3934/dcds.2013.33.861},
      review={\MR{2975138}},
}

\end{biblist}
\end{bibdiv}
\end{document}